\documentclass[11pt,letterpaper]{article}
\usepackage[margin=0.9in]{geometry}
\usepackage[T1]{fontenc}
\usepackage{lmodern}
\usepackage{amsmath,amssymb,amsfonts,amsthm,bm}
\usepackage{graphicx,booktabs,xcolor,subcaption}
\usepackage{algorithm,algpseudocode}
\usepackage[numbers,sort&compress]{natbib}
\usepackage[colorlinks=true,linkcolor=blue,citecolor=blue,urlcolor=blue]{hyperref}
\usepackage{float}
\graphicspath{{./}{./figures/}{./template_images/}}
\newcommand{\R}{\mathbb{R}}
\newcommand{\dd}{\,\mathrm{d}}
\newcommand{\argmin}{\operatorname*{arg\,min}}
\newcommand{\argmax}{\operatorname*{arg\,max}}

\newtheorem{lemma}{Lemma}
\newtheorem{proposition}{Proposition}
\newtheorem{corollary}{Corollary}

\let\originalincludegraphics\includegraphics
\renewcommand{\includegraphics}[2][]{%
  \IfFileExists{#2}{\originalincludegraphics[#1]{#2}}{%
    \IfFileExists{figures/#2}{\originalincludegraphics[#1]{figures/#2}}{%
      \fbox{\parbox{0.75\linewidth}{\centering Missing figure: \texttt{\detokenize{#2}}}}%
    }%
  }%
}
\title{Hamiltonian Primal--Dual Policy Reinforcement Learning\\for Derivative-Free Model-Based Control}
\author{K. Rabiei$^1$ \and Z. Rui$^2$ \and X. Lu$^2$ \and W. Hao$^1$\\[0.5em]
\small $^1$Department of Mathematics, The Pennsylvania State University, USA\\
\small $^2$Department of Computer Science and Engineering, The Pennsylvania State University, USA\\
\small Correspondence: W. Hao (wxh64@psu.edu)}
\date{Preprint manuscript -- October 2026}
\begin{document}
\maketitle
\begin{center}\textbf{Manuscript under review}\end{center}
\begin{abstract}
Finite-horizon optimal control becomes difficult when a dynamical model can be simulated reliably but its derivatives are unavailable, discontinuous, or unreliable. Hamiltonian Primal--Dual Policy Optimization (HPDPO) is introduced as a derivative-free, model-based reinforcement-learning framework for this setting. The control is represented as a time-indexed finite-horizon policy. Candidate actions are evaluated by deterministic suffix rollouts of the supplied transition model, while a regularized Hamiltonian dual formulation supplies structured future-cost information through block-coordinate multiplier updates. No differentiation of the state dynamics or learned critic is required. The method is validated on nonsmooth problems with exact solutions, an analytical and learned ReLU biomarker cascade, an amyloid Neural ODE trained from longitudinal ADNI data, and three multidimensional-control problems with up to five simultaneous controls. Across these examples, HPDPO closely reproduces exact or reference objective values, trajectories, and control structures. The results show that forward simulation and Hamiltonian dual structure can be combined to solve deterministic finite-horizon control problems involving nonsmooth, learned, or black-box dynamics.
\end{abstract}
\noindent\textbf{Keywords:} model-based reinforcement learning; optimal control; Hamiltonian methods; derivative-free optimization; Neural ODEs

\section{Introduction}
\label{sec:introduction}

Reinforcement learning (RL) and optimal control both study sequential
decision-making, but they typically differ in the information available and
the objects used to improve a decision rule. Optimal control usually begins
with a dynamical model and derives conditions involving the state, costate, and
control \cite{Pontryagin1962}. RL instead formulates the problem in terms of
states, actions, transitions, rewards, and policies \cite{SuttonBarto2018}. We
consider the finite-horizon problem
\begin{equation}
\label{eq}
\begin{aligned}
\min_{u(\cdot)}\quad
J(u)
&=
\Phi\bigl(x(T)\bigr)
+
\int_0^T \ell\bigl(x(t),u(t),t\bigr)\,\dd t,
\\
\text{subject to}\quad
\dot{x}(t)
&=
f\bigl(x(t),u(t),t\bigr),
\qquad
x(0)=x_0,
\qquad
u(t)\in\mathcal U.
\end{aligned}
\end{equation}
Here $x(t)\in\R^n$ is the state vector and
$u(t)\in\mathcal U\subseteq\R^m$ is the control vector. The scalar-control
case is recovered by taking $m=1$.

Our interest is the setting in which the dynamics can be evaluated and
integrated forward, but their derivatives are unavailable, discontinuous, or
numerically unreliable. This situation arises in nonsmooth analytical models,
black-box simulators, and learned or data-driven dynamical systems, including
models learned from longitudinal observations in biomedical applications
\cite{ZhangLin2018,ZhuWu2021}. For example, ReLU networks are continuous but
not classically differentiable at activation boundaries \cite{Glorot2011}.
A ReLU Neural ODE may therefore provide an accurate transition model without
supplying the smooth Jacobians required by a classical adjoint calculation
\cite{Chen2018}. Similar difficulties arise when a simulator returns
trajectories but does not expose derivatives \cite{Conn2009,Larson2019}.

Classical Hamiltonian optimal control introduces
\begin{equation}
\label{eq:intro-hamiltonian}
H(x,u,\lambda,t)
=
\ell(x,u,t)+\lambda^T f(x,u,t),
\end{equation}
where $\lambda(t)$ is the costate. Forward--backward methods propagate the
state from the initial condition, propagate the costate backward from a
terminal condition, and update the control through a Hamiltonian optimality
condition \cite{LenhartWorkman2007,McAsey2012}. These methods use the model
efficiently, but the costate equation requires derivatives of the dynamics. Reinforcement learning offers a complementary approach in which actions are
evaluated according to their effect on cumulative future outcomes
\cite{SuttonBarto2018}. The specific RL concepts needed for the proposed method
are introduced in Section~\ref{sec:background}. After time discretization of the state equations, Eq.~\eqref{eq} has the
deterministic transition form
\begin{equation}
\label{eq:discrete-transition}
x_{k+1}=F_k(x_k,u_k),
\end{equation}
where, for the RK4 scheme used in the numerical experiments,
\begin{equation}
\label{eq:Rk4-transition}
F_k(x_k,u_k)
=
x_k+\frac{\Delta t}{6}
\left(K_1+2K_2+2K_3+K_4\right),
\end{equation}
with
\[
\begin{aligned}
K_1 &= f(x_k,u_k,t_k),\\
K_2 &= f\left(x_k+\frac{\Delta t}{2}K_1,u_k,
t_k+\frac{\Delta t}{2}\right),\\
K_3 &= f\left(x_k+\frac{\Delta t}{2}K_2,u_k,
t_k+\frac{\Delta t}{2}\right),\\
K_4 &= f\left(x_k+\Delta t K_3,u_k,t_k+\Delta t\right).
\end{aligned}
\]
More generally, $F_k$ may represent any numerical integrator, simulator,
or learned transition model available for forward evaluation. This
representation defines a deterministic finite-horizon decision process in
which candidate actions can be evaluated by propagating their future
consequences through the available model
\cite{Bellman1957,Puterman1994,Sutton1991}.

The present setting lies between standard Hamiltonian methods and commonly
used RL approaches. Hamiltonian methods recover future sensitivity through a
derivative-based costate equation. Model-free policy-gradient and actor--critic
methods avoid derivatives of the physical dynamics, but generally rely on
sampled returns, value functions, or critics
\cite{Williams1992,KondaTsitsiklis2000,Silver2014,Schulman2017,Haarnoja2018}.
Dynamic programming uses a model but seeks a value function over the relevant
state space. Model-based policy-search and predictive-control methods evaluate
future trajectories, but often optimize an entire action sequence or a
parameterized feedback policy through stochastic search or differentiation
through a smooth model
\cite{KamtheDeisenroth2018,Chua2018,Janner2019}. These considerations lead to the
question considered here:

\begin{center}\itshape
Can a finite-horizon policy be improved directly through deterministic model
rollouts while retaining Hamiltonian primal--dual structure, without learning
a critic or differentiating the dynamics?
\end{center}

We address this question with \emph{Hamiltonian primal--dual policy
optimization} (HPDPO). For a prescribed initial state, the decision rule is
represented by the time-indexed sequence
\begin{equation}
\label{eq:intro-policy-sequence}
U=(u_0,u_1,\ldots,u_{M-1})\in\mathcal U^M,
\qquad u_k\in\R^m.
\end{equation}
At time $t_k$, HPDPO evaluates a candidate action by inserting it into the
current sequence and rerolling the model from $t_k$ to the terminal time, namely, $u_k\approx u(t_k)$. The
suffix rollout provides the downstream cost associated with that candidate,
while a regularized Hamiltonian term incorporates a bounded multiplier
trajectory. Accepted actions and their corresponding trajectories are retained
sequentially.  The main contributions are as follows:
\begin{enumerate}
\item We formulate the time-discretized finite-horizon control problem as a
deterministic model-based policy-optimization problem while separating the
role of discretization from the policy-update mechanism.
\item We derive a regularized Hamiltonian primal--dual objective in which
the control sequence is the primal variable, the multiplier sequence is
the dual variable, and the state trajectory is produced only by the
supplied transition model.
\item We replace derivative-based costate integration with bounded
block-coordinate optimization of the multiplier trajectory. For a fixed
policy and $\beta_{\lambda,2}>0$, the dual objective is strongly concave
with a unique maximizer, and repeated unrelaxed cyclic block-coordinate
sweeps converge to this maximizer.
\item We evaluate HPDPO on nonsmooth problems with exact solutions,
analytical and learned ReLU cascade models, an amyloid Neural ODE
trained on longitudinal ADNI data, and three additional multidimensional-control
benchmarks with simultaneous control components, including a five-control
problem solved with a dimension-adaptive QMC action search.
\end{enumerate}

The remainder of the paper develops the connection between Hamiltonian optimal
control and model-based RL, derives HPDPO, and examines its numerical behavior
on analytical and learned dynamical systems.

\section{From Hamiltonian Optimal Control to Model-Based Reinforcement Learning}
\label{sec:background}

This section summarizes the two forms of future-cost information used in
optimal control and RL and establishes the discrete decision problem from
which HPDPO is derived.

\subsection{Hamiltonian optimality and derivative-based future information}
\label{subsec:hamiltonian-fbs}

Consider the finite-horizon problem in Eq.~\eqref{eq}, with prescribed
initial state $x_0$. When the required derivatives exist, Pontryagin's
principle gives the first-order necessary conditions
\cite{Pontryagin1962,LenhartWorkman2007}. Using the Hamiltonian in
Eq.~\eqref{eq:intro-hamiltonian}, these conditions are
\begin{align}
\dot{x}(t)
&= f\bigl(x(t),u(t),t\bigr),
\label{eq:pontryagin-state}
\\
\dot{\lambda}(t)
&= -\frac{\partial H}{\partial x}
   = -\frac{\partial \ell}{\partial x}
   - \left(\frac{\partial f}{\partial x}\right)^{\mathsf T}
     \lambda(t),
\qquad
\lambda(T)=\nabla_x\Phi\bigl(x(T)\bigr),
\label{eq:pontryagin-costate}
\\
u^*(t)
&\in
\operatorname*{arg\,min}_{v\in\mathcal U}
H\bigl(x^*(t),v,\lambda(t),t\bigr).
\label{eq:pontryagin-control}
\end{align}
The state condition is imposed at the beginning of the horizon, whereas the
costate condition is imposed at the end. A forward--backward sweep therefore
starts from a control estimate, propagates the state forward, propagates the
costate backward, and then updates the control
\cite{LenhartWorkman2007,McAsey2012}:
\begin{equation}
\label{eq:fbs-flow}
U^{(r)}
\longrightarrow
X^{(r)}\ \text{forward}
\longrightarrow
\Lambda^{(r)}\ \text{backward}
\longrightarrow
U^{(r+1)}.
\end{equation}
The costate represents the sensitivity of the remaining objective along the current trajectory, but its backward propagation depends on derivatives of the dynamics  $\partial f/\partial x$ in
Eq.~\eqref{eq:pontryagin-costate}. HPDPO instead obtains future-cost information through forward model evaluations.

\subsection{Control problem as a finite-horizon decision process}
\label{subsec:oc-mdp}

Let
\begin{equation}
\label{eq:time-grid}
0=t_0<t_1<\cdots<t_M=T,
\qquad
\Delta t=t_{k+1}-t_k,
\end{equation}
be a uniform time grid. Applying a numerical integrator to the state equation
produces the transition map in
Eq.~\eqref{eq:discrete-transition}. The corresponding finite-horizon
Markov decision process is obtained by identifying
\begin{equation}
\label{eq:rl-oc-correspondence}
\begin{aligned}
S_k&=x_k,\qquad A_k=u_k,\\
c_k(x_k,u_k)&=\Delta t\,\ell(x_k,u_k,t_k),\qquad
c_M(x_M)=\Phi(x_M).
\end{aligned}
\end{equation}
The discretized objective is
\begin{equation}
\label{eq:discrete-control-cost}
J_h(U)
=
\sum_{k=0}^{M-1}c_k(x_k,u_k)+c_M(x_M),
\end{equation}
where the states are generated recursively by $F_k$. For a general feedback
policy, the action has the form $u_k=\mu_k(x_k)$. This paper considers a
prescribed initial state and the restricted policy class
\begin{equation}
\label{eq:restricted-policy-class}
\mu_k(x)\equiv u_k,
\qquad
k=0,\ldots,M-1.
\end{equation}
Thus, the sequence $U=(u_0,\ldots,u_{M-1})$ in
Eq.~\eqref{eq:intro-policy-sequence} is a deterministic time-indexed policy for
the fixed-initial-state problem. Although the policy is restricted to this
class, the decisions remain dynamically coupled: changing $u_k$ changes
$x_{k+1}$ and therefore all subsequent states and costs. Candidate actions
must consequently be compared through their downstream trajectories rather
than through the immediate stage cost alone.

\subsection{Reinforcement learning, value functions, and policy improvement}
\label{subsec:rl-values}

Reinforcement learning provides a language and a set of algorithms for this
sequential problem. At stage $k$, an agent observes state
$S_k$, selects an action $A_k$, receives a reward, and moves to $S_{k+1}$.
Because the present paper is formulated as minimization, we use costs rather
than rewards; taking
$r_k=-c_k$ makes reward maximization and cost minimization equivalent. A
finite-horizon deterministic policy is
\begin{equation}
\label{eq:policy-definition}
    \mu=(\mu_0,\mu_1,\ldots,\mu_{M-1}),
    \qquad
    A_k=\mu_k(S_k).
\end{equation}

For a fixed policy $\mu$, the value function is the remaining cost from state
$x$ at stage $k$:
\begin{equation}
\label{eq:value-function-definition}
\begin{aligned}
    V_k^{\mu}(x)
    &=
    c_k\bigl(x,\mu_k(x)\bigr)
    +
    V_{k+1}^{\mu}
    \left(F_k\bigl(x,\mu_k(x)\bigr)\right),\\
    V_M^{\mu}(x)&=c_M(x).
\end{aligned}
\end{equation}
The action-value function separates the current action from the policy used
thereafter:
\begin{equation}
\label{eq:action-value-definition}
    Q_k^{\mu}(x,a)
    =
    c_k(x,a)
    +
    V_{k+1}^{\mu}\bigl(F_k(x,a)\bigr).
\end{equation}
It therefore answers the question that motivates the present paper: what is the
remaining cost if action $a$ is applied now and the current policy is followed
for the rest of the horizon? Once this quantity is available, a policy can be
improved by choosing an action with smaller action value,
\begin{equation}
\label{eq:policy-improvement-basic}
    \mu_k^{+}(x)
    \in
    \argmin_{a\in\mathcal A(x)}
    Q_k^{\mu}(x,a).
\end{equation}
The corresponding optimal Bellman recursion is
\begin{equation}
\label{eq:bellman-optimality}
\begin{aligned}
    V_k^{*}(x)
    &=
    \min_{a\in\mathcal A(x)}
    \left[
        c_k(x,a)
        +V_{k+1}^{*}\bigl(F_k(x,a)\bigr)
    \right],\\
    V_M^{*}(x)&=c_M(x).
\end{aligned}
\end{equation}

Equations~\eqref{eq:action-value-definition}--\eqref{eq:bellman-optimality}
make the role of reinforcement learning explicit. A current action is not
ranked only by its immediate cost; it is ranked by immediate cost plus future
cost. Reinforcement-learning methods differ primarily in how this future-cost
information is represented, estimated, or computed. Value-based methods learn
or approximate $V$ or $Q$; policy-based methods optimize a policy directly;
and actor--critic methods combine a policy with a learned value approximation
\cite{WatkinsDayan1992,Williams1992,KondaTsitsiklis2000,Silver2014,
Schulman2017,Haarnoja2018}.

\subsection{Model-based reinforcement learning by deterministic suffix rollout}
\label{subsec:rl-rollout}

The preceding formulation also depends on whether the transition model is used
explicitly. Model-free reinforcement learning estimates decision quality from
sampled interaction without planning through a supplied model. Model-based
reinforcement learning uses an analytical, learned, or simulator transition
model to predict future states and improve the policy
\cite{Sutton1991,DeisenrothRasmussen2011,KamtheDeisenroth2018,Chua2018,
Janner2019}. The present setting is naturally model-based because $F_k$ is
available and can be evaluated forward.

Let $U$ be the current time-indexed policy. At coordinate $k$, replace the
current action by a candidate $v$:
\begin{equation}
\label{eq:rollout-candidate-policy}
    U^{(k)}(v)
    =
    (u_0,\ldots,u_{k-1},v,u_{k+1},\ldots,u_{M-1}).
\end{equation}
The candidate produces the suffix trajectory
\begin{equation}
\label{eq:suffix-rollout}
\begin{aligned}
    x_{k+1}^{(v)}
    &=F_k(x_k,v),\\
    x_{j+1}^{(v)}
    &=F_j(x_j^{(v)},u_j),
    \qquad j=k+1,\ldots,M-1.
\end{aligned}
\end{equation}
Its complete downstream cost is
\begin{equation}
\label{eq:rollout-action-value}
\begin{aligned}
    Q_k^{\mathrm{roll}}(x_k,v;U)
    =
    c_k(x_k,v)
    +
    \sum_{j=k+1}^{M-1}c_j(x_j^{(v)},u_j)
    +
    c_M(x_M^{(v)}).
\end{aligned}
\end{equation}
An ordinary rollout policy-improvement step is then
\begin{equation}
\label{eq:rollout-improvement}
    u_k^{+}
    \in
    \argmin_{v\in\mathcal U}
    Q_k^{\mathrm{roll}}(x_k,v;U).
\end{equation}

This rollout evaluates a candidate action through the supplied transition
model and uses the resulting future cost to improve the policy. The quantity in
Eq.~\eqref{eq:rollout-action-value} supplies, at the currently visited state,
the information represented by an action-value function. It is not a learned
or stored approximation of $Q_k^{\mu}(x,a)$ over the full state--action space.
Instead, the needed action values are recomputed directly by deterministic
rollout whenever candidate actions are compared. This distinction is central
to HPDPO: the method uses reinforcement-learning policy improvement, but it
does not require a global critic.

\subsection{Bellman and Hamiltonian routes to future-cost information}
\label{subsec:bellman-hamiltonian}

The reinforcement-learning value function and the optimal-control costate arise
from the same underlying requirement: current decisions must be evaluated by
their effect on the remaining objective. Their connection is most transparent
in continuous time. Define the optimal future cost
\begin{equation}
\label{eq:continuous-value-function}
    V(t,x)
    =
    \inf_{u(\cdot)}
    \left\{
        \Phi(x(T))
        +
        \int_t^T\ell(x(s),u(s),s)\,\dd s
    \right\}.
\end{equation}
Under standard smoothness assumptions, $V$ satisfies the
Hamilton--Jacobi--Bellman equation
\begin{equation}
\label{eq:hjb-equation}
    -V_t(t,x)
    =
    \min_{u\in\mathcal U}
    \left[
        \ell(x,u,t)
        +
        \nabla_xV(t,x)^T f(x,u,t)
    \right]
\end{equation}
\cite{Bellman1957,Bertsekas2017,LewisVrabie2009,KimYang2020}. Along a
sufficiently smooth optimal trajectory,
\begin{equation}
\label{eq:costate-value-relation}
    \lambda^*(t)
    =
    \nabla_xV(t,x^*(t)).
\end{equation}
Thus, the costate and the value gradient both encode future sensitivity, while
$Q$ evaluates the future consequence of a particular action.

This relationship clarifies the difference between FBS and the proposed
method. Both may use the same time grid and the same control vector $U$; the
use of discretization alone does not determine the update mechanism. FBS first
computes a complete state trajectory and a complete derivative-based costate
trajectory, then updates the control vector. HPDPO instead considers the policy
coordinates sequentially. Symbolically,
\begin{equation}
\label{eq:sequential-state-action-flow}
    x_k
    \longrightarrow
    u_k
    \longrightarrow
    x_{k+1}
    \longrightarrow
    u_{k+1}.
\end{equation}

Building on this rollout evaluation, HPDPO adds a Hamiltonian dual trajectory
updated through bounded block-coordinate maximization. The method does not solve the HJB equation, does
not learn a global $V$ or $Q$, and does not assume that its regularized discrete
multiplier is identical to $\nabla_xV$. Its most precise classification is
\emph{deterministic model-based direct policy optimization in a restricted
finite-horizon policy class}.  The required method must
therefore combine two sources of information: the model-based RL rollout,
which gives the downstream cost generated by a candidate action, and the
Hamiltonian dual information, which gives the multiplier contribution
along that rollout.
Section~\ref{sec:proposed-method} derives HPDPO by coupling these two
components in a regularized finite-dimensional primal--dual objective.


\section{Hamiltonian Primal--Dual Policy Optimization}
\label{sec:proposed-method}

Building on the rollout formulation of Section~\ref{sec:background}, this section derives the regularized Hamiltonian primal--dual objective and the corresponding policy and multiplier updates.

\subsection{Integrated-by-parts Hamiltonian representation}
\label{subsec:continuous-ibp}

For the dynamics in Eq.~\eqref{eq}, introduce a multiplier trajectory
$\lambda(t)\in\mathbb R^n$. The dynamics-augmented objective is
\begin{equation}
\label{eq:continuous-lagrangian}
\begin{aligned}
    \mathcal L(x,u,\lambda)
    =
    \Phi(x(T))
    +
    \int_0^T
    \left[
        \ell(x,u,t)
        +
        \lambda(t)^T\bigl(f(x,u,t)-\dot x(t)\bigr)
    \right]
    \,\dd t.
\end{aligned}
\end{equation}
When the state equation is satisfied, the residual term vanishes and
$\mathcal L$ reduces to the original objective. Integrating the term containing
$\dot x$ by parts gives
\begin{equation}
\label{eq:ibp-identity}
\begin{aligned}
    -\int_0^T\lambda(t)^T\dot x(t)\,\dd t
    =
    -\lambda(T)^Tx(T)
    +\lambda(0)^Tx(0)
    +\int_0^T\dot\lambda(t)^Tx(t)\,\dd t.
\end{aligned}
\end{equation}
Substitution into Eq.~\eqref{eq:continuous-lagrangian} yields
\begin{equation}
\label{eq:continuous-ibp}
\begin{aligned}
    \mathcal L_{\mathrm{IBP}}(x,u,\lambda)
    ={}&
    \Phi(x(T))
    +
    \int_0^T
    \left[
        \ell(x,u,t)
        +\lambda(t)^Tf(x,u,t)
        +\dot\lambda(t)^Tx(t)
    \right]
    \,\dd t\\
    &-\lambda(T)^Tx(T)
    +\lambda(0)^Tx(0).
\end{aligned}
\end{equation}
This representation is the continuous primal--dual basis of HPDPO. The
derivative is shifted to the multiplier trajectory, which can be discretized
and updated directly through block-coordinate maximization.

\subsection{Rollout-generated discrete primal--dual objective}
\label{subsec:discrete-objective}

Using the grid in Eq.~\eqref{eq:time-grid}, let
\begin{equation}
    \lambda_k\approx\lambda(t_k),
    \qquad
    \Lambda=(\lambda_0,\lambda_1,\ldots,\lambda_M).
\end{equation}
For a given policy $U$, repeated application of the numerical transition map
generates the state trajectory,
\begin{equation}
\label{eq:rollout-operator}
    X=\mathcal R(U).
\end{equation}
In reinforcement-learning language, $\mathcal R$ is the deterministic environment
model used to evaluate a policy. Using left-endpoint quadrature for the integral terms and a forward difference
for $\dot\lambda$, Eq.~\eqref{eq:continuous-ibp} gives
\begin{equation}
\label{eq:discrete-ibp}
\begin{aligned}
    L_{\mathrm{IBP}}(U,\Lambda)
    ={}&
    \sum_{k=0}^{M-1}
    \Big[
        \Delta t\,\ell(x_k,u_k,t_k)
        +\Delta t\,\lambda_k^Tf(x_k,u_k,t_k)     
    \\
    &+(\lambda_{k+1}-\lambda_k)^Tx_k\Big] -\lambda_M^Tx_M
    +\lambda_0^Tx_0
    +\Phi(x_M),\\
    \qquad X=\mathcal R(U).
\end{aligned}
\end{equation}
To expose the coordinate structure, define
\begin{equation}
\label{eq:discrete-dual-coefficients-early}
    b_k(U)
    =
    \begin{cases}
        \Delta t\,f(x_0,u_0,t_0), & k=0,\\[1mm]
        \Delta t\,f(x_k,u_k,t_k)+x_{k-1}-x_k,
        &1\leq k\leq M-1,\\[1mm]
        x_{M-1}-x_M, &k=M.
    \end{cases}
\end{equation}
Then,
\begin{equation}
\label{eq:discrete-ibp-compact}
    L_{\mathrm{IBP}}(U,\Lambda)
    =
    J_h(U)+\sum_{k=0}^{M}\lambda_k^Tb_k(U).
\end{equation}
The vectors $b_k(U)$ couple local vector-field evaluations to the trajectory
generated by the numerical rollout. Since the two discretizations are different,
$b_k(U)$ need not vanish even when $X=\mathcal R(U)$.

\subsection{Regularized bounded saddle formulation}
\label{subsec:regularized-saddle}

Because $L_{\mathrm{IBP}}$ is linear in $\Lambda$, unconstrained maximization
may be unbounded or poorly conditioned. We therefore introduce
\begin{equation}
\label{eq:regularized-saddle}
\begin{aligned}
    L_{\mathrm{reg}}(U,\Lambda)
    ={}&
    L_{\mathrm{IBP}}(U,\Lambda)
    +\beta_u\sum_{k=0}^{M-2}\|u_{k+1}-u_k\|^2\\
    &-\beta_{\lambda,s}\sum_{k=0}^{M-1}
        \|\lambda_{k+1}-\lambda_k\|^2
    -\beta_{\lambda,2}\sum_{k=0}^{M}\|\lambda_k\|^2,
\end{aligned}
\end{equation}
where
\begin{equation}
    \beta_u\geq0,
    \qquad
    \beta_{\lambda,s}\geq0,
    \qquad
    \beta_{\lambda,2}\geq0.
\end{equation}
The policy regularization term discourages grid-scale oscillations. The
multiplier smoothness and $L^2$ terms discourage large or rapidly varying
dual sequences. The signs of these terms follow from the saddle structure:
since $U$ is minimized, its regularization term enters with a positive
sign, so oscillatory policies increase the objective. In contrast,
$\Lambda$ is maximized, so its regularization terms enter with negative
signs; large or rapidly varying multiplier sequences then decrease the
objective and are therefore disfavored by the maximization.

The multiplier sequence is restricted to the box
\begin{equation}
\label{eq:lambda-box}
\begin{aligned}
\mathcal B_{\lambda}
=
\bigl\{\Lambda=(\lambda_0,\ldots,\lambda_M):\;&
-\lambda_{\max}\leq\lambda_{k,i}\leq\lambda_{\max},\\
&k=0,\ldots,M,\quad i=1,\ldots,n\bigr\}.
\end{aligned}
\end{equation}
where $\lambda_{\max}>0$. HPDPO is based on the regularized
finite-dimensional saddle problem
\begin{equation}
\label{eq:regularized-saddle-problem}
    \min_{U\in\mathcal U^M}
    \max_{\Lambda\in\mathcal B_{\lambda}}
    L_{\mathrm{reg}}(U,\Lambda).
\end{equation}
The regularization parameters are numerical stabilization parameters and
modify the discrete saddle function.

No convexity or differentiability assumption is imposed on the dependence of
$L_{\mathrm{reg}}$ on the policy $U$, since the trajectory
$X=\mathcal R(U)$ may be generated by nonlinear, nonsmooth, or black-box
dynamics. Thus, the primal dependence may be nonsmooth and nonconvex.
For fixed $U$, however, the dependence on $\Lambda$ is quadratic and
concave, and becomes strongly concave when $\beta_{\lambda,2}>0$, as shown
below. Under this condition, the regularized HPDPO problem therefore has a
possibly nonsmooth and nonconvex primal block together with a strongly
concave dual block.

For the following analysis, the multiplier sequence
$\Lambda
=
\begin{pmatrix}
\lambda_0,
\lambda_1,
\cdots,
\lambda_M
\end{pmatrix}^T
\in\mathbb R^{(M+1)n}$ is the stacked column vector.
To represent the consecutive differences appearing in the multiplier
smoothness penalty, define the first-difference matrix
\begin{equation}
D_M
=
\begin{pmatrix}
-1 & 1 & 0 & \cdots & 0\\
0 & -1 & 1 & \cdots & 0\\
\vdots & & \ddots & \ddots & \vdots\\
0 & \cdots & 0 & -1 & 1
\end{pmatrix}
\in\mathbb R^{M\times(M+1)},
\end{equation}
and its block extension
\begin{equation}
\mathcal D
=
D_M\otimes I_n
\in
\mathbb R^{Mn\times(M+1)n},
\end{equation}
where $I_n$ denotes the $n\times n$ identity matrix. The matrix $D_M$
acts on the time index, while the Kronecker product with $I_n$ applies the
same first-difference operation to the $n$ components of each multiplier
block. Hence,
\begin{equation}
\mathcal D\Lambda
=
\begin{pmatrix}
\lambda_1-\lambda_0\\
\lambda_2-\lambda_1\\
\vdots\\
\lambda_M-\lambda_{M-1}
\end{pmatrix},
\end{equation}
and therefore
\begin{equation}
\|\mathcal D\Lambda\|^2
=
\sum_{k=0}^{M-1}
\|\lambda_{k+1}-\lambda_k\|^2.
\end{equation}
Similarly,
\begin{equation}
\|\Lambda\|^2
=
\sum_{k=0}^{M}\|\lambda_k\|^2.
\end{equation}
Consequently, for fixed $U$, the terms of $L_{\mathrm{reg}}$ that depend on
$\Lambda$ can be written compactly as
\begin{equation}
\label{eq:dual-compact-form}
\sum_{k=0}^{M}\lambda_k^Tb_k(U)
-\beta_{\lambda,s}\|\mathcal D\Lambda\|^2
-\beta_{\lambda,2}\|\Lambda\|^2.
\end{equation}
This representation makes the curvature of the dual subproblem explicit. For a twice-differentiable function, a uniform negative-definite Hessian
bound provides a standard characterization of strong concavity
\cite{BoydVandenberghe2004}.

\begin{lemma}[Strong concavity of the dual block]
\label{lem:dual-strong-concavity}
For every fixed admissible policy $U$, if $\beta_{\lambda,2}>0$, then
$L_{\mathrm{reg}}(U,\cdot)$ is
$2\beta_{\lambda,2}$-strongly concave with respect to the multiplier
sequence $\Lambda$.
\end{lemma}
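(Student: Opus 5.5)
For fixed admissible $U$, the trajectory $X=\mathcal R(U)$ and the vectors $b_k(U)$ in Eq.~\eqref{eq:discrete-dual-coefficients-early} are fixed. By Eq.~\eqref{eq:discrete-ibp-compact} and Eq.~\eqref{eq:regularized-saddle}, $L_{\mathrm{reg}}(U,\Lambda)$ therefore splits as a constant $C(U)=J_h(U)+\beta_u\sum_{k}\|u_{k+1}-u_k\|^2$ plus the $\Lambda$-dependent part in Eq.~\eqref{eq:dual-compact-form}. Writing $b(U)\in\mathbb R^{(M+1)n}$ for the stacked vector of the $b_k(U)$, I will use the representation
\begin{equation*}
L_{\mathrm{reg}}(U,\Lambda)=C(U)+b(U)^T\Lambda-\Lambda^T\bigl(\beta_{\lambda,s}\mathcal D^T\mathcal D+\beta_{\lambda,2}I\bigr)\Lambda ,
\end{equation*}
which is a quadratic function of $\Lambda$. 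No regularity in $U$ is needed, since $U$ only enters through $C(U)$ and the linear coefficient $b(U)$.

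Next I compute the Hessian in $\Lambda$: $\nabla^2_\Lambda L_{\mathrm{reg}}=-2\beta_{\lambda,s}\mathcal D^T\mathcal D-2\beta_{\lambda,2}I_{(M+1)n}$. Since $\beta_{\lambda,s}\ge0$ and $\mathcal D^T\mathcal D$ is positive semidefinite ($v^T\mathcal D^T\mathcal Dv=\|\mathcal Dv\|^2\ge0$), this gives $\nabla^2_\Lambda L_{\mathrm{reg}}\preceq-2\beta_{\lambda,2}I$ uniformly in $\Lambda$ and $U$. By the Hessian characterization cited before the lemma \cite{BoydVandenberghe2004}, this means $2\beta_{\lambda,2}$-strong concavity.

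As a derivative-free alternative, I can check that $L_{\mathrm{reg}}(U,\Lambda)+\beta_{\lambda,2}\|\Lambda\|^2$ is concave. It equals an affine function minus the convex function $\beta_{\lambda,s}\|\mathcal D\Lambda\|^2$, which is the defining property of $2\beta_{\lambda,2}$-strong concavity under the convention $f+\tfrac{\mu}{2}\|\cdot\|^2$ concave.

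The only delicate point is bookkeeping. I need to confirm that the constant $2\beta_{\lambda,2}$ matches the paper's modulus convention, i.e.\ whether the modulus is $\mu$ in $f+\tfrac{\mu}{2}\|\cdot\|^2$, which gives $\mu=2\beta_{\lambda,2}$. I also need to confirm that strong concavity is meant on all of $\mathbb R^{(M+1)n}$, and hence on the convex box $\mathcal B_\lambda$. Beyond that, no obstacle is expected.
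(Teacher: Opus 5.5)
Your proposal is correct and follows the paper's proof essentially verbatim. The paper also isolates the linear term $\sum_k \lambda_k^T b_k(U)$, computes the Hessian $-2\beta_{\lambda,s}\mathcal D^T\mathcal D-2\beta_{\lambda,2}I_{(M+1)n}$, and uses positive semidefiniteness of $\mathcal D^T\mathcal D$ to conclude $\nabla^2_{\Lambda\Lambda}L_{\mathrm{reg}}\preceq-2\beta_{\lambda,2}I_{(M+1)n}$; your derivative-free check that $L_{\mathrm{reg}}(U,\cdot)+\beta_{\lambda,2}\|\Lambda\|^2$ is concave is a valid supplement confirming the modulus $2\beta_{\lambda,2}$ under the standard convention.
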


\noindent\textit{The proof is provided in Supporting Information, Section S1.1.}

\begin{proposition}[Existence and uniqueness of the dual maximizer]
\label{prop:dual-existence-uniqueness}
Let $U$ be any fixed admissible policy and suppose that
$\beta_{\lambda,2}>0$. Then the dual subproblem
\begin{equation}
\max_{\Lambda\in\mathcal B_\lambda}
L_{\mathrm{reg}}(U,\Lambda)
\end{equation}
admits a unique global maximizer, denoted by
\begin{equation}
\Lambda^\star(U)
=
\argmax_{\Lambda\in\mathcal B_\lambda}
L_{\mathrm{reg}}(U,\Lambda).
\end{equation}
\end{proposition}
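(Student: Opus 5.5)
The plan is to combine compactness of the box $\mathcal B_\lambda$ with the strong concavity established in Lemma~\ref{lem:dual-strong-concavity}. Existence will come from the Weierstrass extreme value theorem, and uniqueness from strict concavity on a convex set.

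\textbf{Step 1: reduce to a quadratic in $\Lambda$.} Fix an admissible $U$. The rollout $X=\mathcal R(U)$ is then a fixed finite trajectory, so $J_h(U)$, the policy penalty $\beta_u\sum\|u_{k+1}-u_k\|^2$, and the vectors $b_k(U)$ are all fixed finite quantities. This needs only that $F_k$, $f$, $\ell$ and $\Phi$ return finite values, which holds because the model can be evaluated forward. By \eqref{eq:discrete-ibp-compact} and \eqref{eq:dual-compact-form}, the map $\Lambda\mapsto L_{\mathrm{reg}}(U,\Lambda)$ is a constant plus
\[
\textstyle\sum_{k}\lambda_k^Tb_k(U)-\beta_{\lambda,s}\|\mathcal D\Lambda\|^2-\beta_{\lambda,2}\|\Lambda\|^2 .
\]
This is a polynomial in $\Lambda$ and hence continuous on $\mathbb R^{(M+1)n}$. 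No regularity in $U$ is needed at any point.

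\textbf{Step 2: existence.} The set $\mathcal B_\lambda$ is a product of closed bounded intervals $[-\lambda_{\max},\lambda_{\max}]$, so it is nonempty (since $\lambda_{\max}>0$), compact and convex. By Weierstrass, the continuous function from Step 1 attains its maximum on $\mathcal B_\lambda$.

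\textbf{Step 3: uniqueness.} Suppose $\Lambda^1\neq\Lambda^2$ are both maximizers with common value $m$. Their midpoint lies in $\mathcal B_\lambda$ by convexity. Lemma~\ref{lem:dual-strong-concavity} gives $2\beta_{\lambda,2}$-strong concavity, so
\[
L_{\mathrm{reg}}\bigl(U,\tfrac12(\Lambda^1+\Lambda^2)\bigr)\ \ge\ m+\tfrac{\beta_{\lambda,2}}{4}\|\Lambda^1-\Lambda^2\|^2\ >\ m .
\]
This contradicts maximality, so the maximizer is unique and $\Lambda^\star(U)$ is well defined.

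The only point needing care is the independence from the primal block. The saddle function may be nonsmooth or nonconvex in $U$, but for each fixed $U$ it is finite and exactly quadratic in $\Lambda$. Once that is noted, the argument is routine, and strong concavity makes it unnecessary to examine the spectrum of $\mathcal D^T\mathcal D$ (which is only positive semidefinite).
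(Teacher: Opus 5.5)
Your proof is correct and follows the same route as the paper: existence by the Extreme Value Theorem, since $L_{\mathrm{reg}}(U,\cdot)$ is continuous on the nonempty compact box $\mathcal B_\lambda$, and uniqueness from the $2\beta_{\lambda,2}$-strong concavity of the dual block on the convex set $\mathcal B_\lambda$. Your midpoint inequality, with the correct constant $\beta_{\lambda,2}/4$, simply writes out the uniqueness step that the paper states in one line.
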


\noindent\textit{The proof is provided in Supporting Information, Section S1.2.}

\subsection{Hamiltonian rollout action score}
\label{subsec:hamiltonian-rollout-score}

We next connect the saddle objective to reinforcement-learning policy
improvement. At policy coordinate $k$, use the candidate policy
$U^{(k)}(v)$ and suffix rollout defined in
Eqs.~\eqref{eq:rollout-candidate-policy} and \eqref{eq:suffix-rollout}. For a
fixed multiplier sequence, define the Hamiltonian rollout action score
\begin{equation}
\label{eq:discrete-hamiltonian-q}
    \mathcal Q_{k,h}^{\mathrm{HP}}(v\mid U,\Lambda)
    =
    L_{\mathrm{reg}}\bigl(U^{(k)}(v),\Lambda\bigr).
\end{equation}
Because the trajectory prefix before $t_k$ is unchanged, all
candidate-independent terms can be collected into a constant $C_k$. The
candidate-dependent part can be written as
\begin{equation}
\label{eq:discrete-hamiltonian-q-decomposition}
\begin{aligned}
    \mathcal Q_{k,h}^{\mathrm{HP}}(v\mid U,\Lambda)
    =C_k
    &+Q_k^{\mathrm{roll}}(x_k,v;U)\\
    &+\sum_{j=k}^{M}\lambda_j^T
      b_j\bigl(U^{(k)}(v)\bigr)\\
    &+\beta_u\sum_{j=0}^{M-2}
      \left\|u_{j+1}^{(k)}(v)-u_j^{(k)}(v)\right\|^2.
\end{aligned}
\end{equation}
The multiplier regularization is constant during the policy-coordinate update
and is absorbed into $C_k$. The central block-coordinate updates are
\begin{align}
\label{eq:central-policy-update}
u_k^{+}
&\in
\argmin_{v\in\mathcal U}
\mathcal Q_{k,h}^{\mathrm{HP}}(v\mid U,\Lambda),
\\
\label{eq:central-dual-update}
\lambda_k^{+}
&\in
\argmax_{z\in[-\lambda_{\max},\lambda_{\max}]^n}
L_{\mathrm{reg}}
\left(
U^{+},
(\lambda_0,\ldots,\lambda_{k-1},
z,\lambda_{k+1},\ldots,\lambda_M)
\right).
\end{align}
\subsection{Alternating policy and dual coordinate updates}
\label{subsec:coordinate-updates}

Let $(U^{(r)},\Lambda^{(r)})$ denote the current outer iterate.

\paragraph*{Sequential model-based policy sweep.}
During the policy update, $\Lambda^{(r)}$ is fixed and the policy coordinates
are considered in the forward order
\begin{equation}
\label{eq:forward-policy-order}
    u_0\longrightarrow u_1\longrightarrow\cdots\longrightarrow u_{M-1}.
\end{equation}
At coordinate $k$, the actions before $k$ are the values already accepted in
the current sweep, while the later actions retain their current values. For
each candidate $v$, only the suffix $x_{k+1}^{(v)},\ldots,x_M^{(v)}$ must be
recomputed. The candidate is scored by
Eq.~\eqref{eq:discrete-hamiltonian-q}, and the minimizing value is accepted.

For a bounded vector-valued control, the action subproblem at a fixed time
point may be evaluated on a finite candidate set
\begin{equation}
\mathcal V_\delta
=
\{v_1,\ldots,v_Q\}
\subseteq
\mathcal U\subseteq\R^m.
\end{equation}
The candidate vectors are always evaluated jointly in $\mathbb{R}^m$.
The construction of the finite candidate set is chosen according to the
control dimension, as described below. To quantify the resolution of the
candidate set, define its covering radius by
\begin{equation}
\delta
=
\sup_{v\in\mathcal U}
\min_{w\in\mathcal V_\delta}\|v-w\|_2.
\end{equation}
Thus, every admissible action is within the distance $\delta$ of at least one
candidate action.
The following result shows that the finite candidate search consistently
approximates the continuous action minimization as the candidate set is
refined. Such error bounds based on the covering radius of a finite
evaluation set are standard in Lipschitz optimization; see, for example,
\cite{MalherbeVayatis2017}.

\begin{corollary}[Consistency of the finite action search]
\label{cor:finite-action-search}
Fix $U$, $\Lambda$, and a policy coordinate $k$. Suppose that there exists
$L_Q>0$ such that
\begin{equation}
\left|
\mathcal Q_{k,h}^{\mathrm{HP}}(v\mid U,\Lambda)
-
\mathcal Q_{k,h}^{\mathrm{HP}}(w\mid U,\Lambda)
\right|
\leq
L_Q \|v-w\|_2
\end{equation}
for all $v,w\in\mathcal U$. Then
\begin{equation}
0
\leq
\min_{v\in\mathcal V_\delta}
\mathcal Q_{k,h}^{\mathrm{HP}}(v\mid U,\Lambda)
-
\min_{v\in\mathcal U}
\mathcal Q_{k,h}^{\mathrm{HP}}(v\mid U,\Lambda)
\leq
L_Q\delta .
\end{equation}
Consequently, the minimum score over $\mathcal V_\delta$ converges to
the continuous action minimum as $\delta\to0$.
\end{corollary}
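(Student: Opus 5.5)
Write $g(v)=\mathcal Q_{k,h}^{\mathrm{HP}}(v\mid U,\Lambda)$ for brevity. The lower bound follows directly from $\mathcal V_\delta\subseteq\mathcal U$: the minimum over a subset cannot be smaller than the minimum over the whole set. The upper bound uses the covering radius $\delta$ together with the Lipschitz hypothesis. Before either step, I would check that the continuous minimum is attained. The Lipschitz hypothesis makes $g$ continuous on $\mathcal U$. In the bounded-control setting described before the statement, $\mathcal U$ is closed and bounded, hence compact, so a minimizer $v^\star\in\mathcal U$ exists. If one does not want to assume compactness, I would read $\min_{v\in\mathcal U}$ as $\inf$ and run the same argument with an $\varepsilon$-minimizer, then let $\varepsilon\to0$.

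For the upper bound, I fix the minimizer $v^\star$. The definition of $\delta$ as a supremum over $\mathcal U$ of the distance to the finite set $\mathcal V_\delta$ guarantees some $w^\star\in\mathcal V_\delta$ with $\|v^\star-w^\star\|_2\le\delta$; the inner minimum is attained because $\mathcal V_\delta$ is finite. Then
\[
\min_{v\in\mathcal V_\delta} g(v)\le g(w^\star)\le g(v^\star)+L_Q\|w^\star-v^\star\|_2\le \min_{v\in\mathcal U} g(v)+L_Q\delta .
\]
In the $\varepsilon$-minimizer version, the right-hand side picks up an extra $\varepsilon$, which I would then send to zero.

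The convergence claim follows by squeezing. The difference of minima lies in $[0,L_Q\delta]$, so it tends to $0$ as $\delta\to0$, provided $L_Q$ is fixed independently of the candidate set. This holds because $L_Q$ depends only on $U$, $\Lambda$ and $k$.

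There is no real obstacle here. The one point needing care is that the continuous minimum is attained, or else handling it as an infimum. Everything else is the standard covering argument.
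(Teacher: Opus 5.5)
Your proposal is correct and follows essentially the same route as the paper. The paper also uses compactness of $\mathcal U$ and the Extreme Value Theorem to obtain a minimizer $v^\star$, picks a candidate within the covering radius $\delta$, applies the Lipschitz chain, gets the lower bound from $\mathcal V_\delta\subseteq\mathcal U$, and concludes by letting $\delta\to0$. Your $\varepsilon$-minimizer variant for non-compact $\mathcal U$ is a harmless extra generality that the paper does not need.
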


\noindent\textit{The proof is provided in Supporting Information, Section S1.3.}

\paragraph*{Dimension-adaptive derivative-free action search.}
The finite-set formulation above is used for both low- and higher-dimensional
controls; only the construction of the candidate set changes with the control
dimension. For small $m$, a Cartesian-product grid gives simple and systematic
coverage of the admissible control set. If $q$ values are used for each control
component, however, the number of joint candidates grows as
\[
q^m,
\]
so a dense tensor grid becomes expensive as $m$ increases. In this case, we
replace the tensor grid by a Sobol quasi-Monte Carlo (QMC) set of joint control
vectors. Independent Monte Carlo sampling could also be used, but finite random
samples may cluster in some regions and leave other regions less well covered,
and they introduce additional run-to-run variation. Sobol points provide a more
regular, reproducible coverage of the bounded control box. QMC sampling can also be used when $m$ is small, but a Cartesian grid is
preferred there when exhaustive structured coverage is still inexpensive.

For box-constrained controls, a minimizer may occur in the interior, on the
boundary, or at a corner of the admissible region. Therefore, in the
higher-dimensional QMC search we augment the Sobol candidates with the $2^m$
corners of the current search box.
Both search strategies use the same coarse-to-fine idea, with a global joint
search followed, when needed, by local joint searches in smaller regions around
the current best action. Thus, Cartesian grids and Sobol/QMC sampling are two
numerical realizations of the same HPDPO action minimization; the rollout,
saddle objective, and primal--dual block updates are unchanged.

The finite candidate evaluation remains an $m$-dimensional action search at
each time point, rather than a grid search over the full policy sequence
$U$ \cite{KoldaLewisTorczon2003,Tseng2001,Wright2015,GrippoSciandrone2000}.
For scalar controls, this reduces to the one-dimensional search used in the
scalar numerical examples below.

After accepting $u_k^{+}$, the corresponding suffix trajectory replaces the
previous one. The next action is therefore evaluated from the updated state,
which gives the sequential model-based policy-improvement structure in
Eq.~\eqref{eq:sequential-state-action-flow}. An optional relaxed update is
\begin{equation}
\label{eq:u-relaxation}
    u_k
    \leftarrow
    (1-\omega_u)u_k+\omega_u u_k^{+},
    \qquad 0<\omega_u\leq1.
\end{equation}
When relaxation is used, the accepted suffix is rerolled using the relaxed
value before the next coordinate is considered.

\paragraph{Backward Hamiltonian dual sweep.}

After the policy sweep, the updated policy $U^{(r+1)}$ and its rollout
$X^{(r+1)}=\mathcal R(U^{(r+1)})$ are fixed. The multiplier blocks are
then maximized in the default backward order
\begin{equation}
\label{eq:backward-dual-order}
\lambda_M
\longrightarrow
\lambda_{M-1}
\longrightarrow
\cdots
\longrightarrow
\lambda_0.
\end{equation}
Assuming $\beta_{\lambda,2}>0$, fix $U$, and hence
$X=\mathcal R(U)$ and $b_k(U)$, and consider an interior multiplier block
$1\leq k\leq M-1$. The terms of
$L_{\mathrm{reg}}$ involving $\lambda_k$ are
\begin{equation}
\begin{aligned}
b_k(U)^T\lambda_k
&-\beta_{\lambda,s}\|\lambda_k-\lambda_{k-1}\|^2\\
&-\beta_{\lambda,s}\|\lambda_{k+1}-\lambda_k\|^2
-\beta_{\lambda,2}\|\lambda_k\|^2 .
\end{aligned}
\end{equation}
Differentiating with respect to $\lambda_k$ gives
\begin{equation}
b_k(U)
+2\beta_{\lambda,s}(\lambda_{k-1}+\lambda_{k+1})
-(4\beta_{\lambda,s}+2\beta_{\lambda,2})\lambda_k.
\end{equation}
Setting this expression equal to zero gives the unconstrained maximizer of
the interior block subproblem. The endpoint expressions follow analogously
by retaining only the single neighboring smoothness term. Consequently,
\begin{equation}
\label{eq:lambda-closed-form}
\widehat{\lambda}_k
=
\begin{cases}
\displaystyle
\frac{
b_0(U)+2\beta_{\lambda,s}\lambda_1
}{
2\beta_{\lambda,s}+2\beta_{\lambda,2}
},
& k=0,
\\[4mm]
\displaystyle
\frac{
b_k(U)+2\beta_{\lambda,s}
(\lambda_{k-1}+\lambda_{k+1})
}{
4\beta_{\lambda,s}+2\beta_{\lambda,2}
},
& 1\leq k\leq M-1,
\\[4mm]
\displaystyle
\frac{
b_M(U)+2\beta_{\lambda,s}\lambda_{M-1}
}{
2\beta_{\lambda,s}+2\beta_{\lambda,2}
},
& k=M.
\end{cases}
\end{equation}

Since the block objective is a concave quadratic centered at
$\widehat{\lambda}_k$, maximizing it over
$[-\lambda_{\max},\lambda_{\max}]^n$ is equivalent to projecting
$\widehat{\lambda}_k$ onto that box. Hence, the exact constrained
block maximizer is
\begin{equation}
\label{eq:lambda-coordinate-update}
\lambda_k^{+}
=
\Pi_{[-\lambda_{\max},\lambda_{\max}]^n}
\left(\widehat{\lambda}_k\right).
\end{equation}
Thus, each multiplier block is updated by its exact constrained maximizer.

\begin{proposition}[Convergence of the dual block-coordinate sweep]
\label{prop:dual-coordinate-convergence}
Fix an admissible policy $U$ and suppose that
$\beta_{\lambda,2}>0$. If the multiplier blocks are repeatedly updated
in a fixed cyclic order using the exact constrained block maximizer in
Eq.~\eqref{eq:lambda-coordinate-update}, then the resulting multiplier
sequence converges to the unique solution $\Lambda^\star(U)$ of the
fixed-$U$ dual maximization problem.
\end{proposition}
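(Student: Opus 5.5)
For fixed $U$, write the dual objective as $g(\Lambda)=c^T\Lambda-\tfrac12\Lambda^T A\Lambda$. Here $c=(b_0(U),\ldots,b_M(U))$ and
\[
A=2\beta_{\lambda,s}\mathcal D^T\mathcal D+2\beta_{\lambda,2}I ,
\]
using the compact form in Eq.~\eqref{eq:dual-compact-form}. By Lemma~\ref{lem:dual-strong-concavity}, $A\succeq 2\beta_{\lambda,2}I\succ0$. The feasible set $\mathcal B_\lambda$ is a Cartesian product of the block boxes $[-\lambda_{\max},\lambda_{\max}]^n$. The strategy is the classical convergence argument for exact cyclic block-coordinate ascent on a strongly concave quadratic over a product of closed convex sets. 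By Proposition~\ref{prop:dual-existence-uniqueness}, the limit is the unique maximizer $\Lambda^\star(U)$.

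\textbf{Step 1 (monotone ascent with sufficient increase).} Restricted to block $k$, $g$ is a concave quadratic in $\lambda_k$ with Hessian $-a_kI$, where $a_k=2\beta_{\lambda,s}+2\beta_{\lambda,2}$ or $4\beta_{\lambda,s}+2\beta_{\lambda,2}$. Since $a_k\ge 2\beta_{\lambda,2}>0$, the block problem is strongly concave. Because its level sets are spheres, the projection formula Eq.~\eqref{eq:lambda-coordinate-update} is its exact maximizer. Using the block optimality variational inequality $\nabla_k g(\lambda_k^+)^T(z-\lambda_k^+)\le0$ for all $z$ in the box, I would show
\[
g(\text{new})-g(\text{old})\ \ge\ \tfrac{a_k}{2}\|\lambda_k^+-\lambda_k\|^2\ \ge\ \beta_{\lambda,2}\|\lambda_k^+-\lambda_k\|^2 .
\]
The sequence $g(\Lambda^{(r)})$ is nondecreasing and bounded above by $g(\Lambda^\star)$. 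Summing over blocks and sweeps then gives $\sum_r\|\Lambda^{(r+1)}-\Lambda^{(r)}\|^2<\infty$, so successive iterates, including the intermediate within-sweep iterates, differ by amounts tending to zero.

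\textbf{Step 2 (limit points are stationary).} The iterates lie in the compact box $\mathcal B_\lambda$, so they have a convergent subsequence $\Lambda^{(r_j)}\to\bar\Lambda$. At each intermediate point of a sweep, block $k$ satisfies its block variational inequality. Since intermediate iterates converge to the same $\bar\Lambda$ and $\nabla g$ is continuous (affine), passing to the limit gives
\[
\nabla_k g(\bar\Lambda)^T(z_k-\bar\lambda_k)\le0 \quad \text{for every } k \text{ and every } z_k \text{ in the box.}
\]
Because the constraint set is a product of the block boxes, summing over $k$ yields the full first-order condition $\nabla g(\bar\Lambda)^T(\Lambda-\bar\Lambda)\le0$ for all $\Lambda\in\mathcal B_\lambda$. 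For a concave $g$ this is sufficient for global optimality, so $\bar\Lambda=\Lambda^\star(U)$ by uniqueness.

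\textbf{Step 3 (whole sequence converges).} Every convergent subsequence has the same limit $\Lambda^\star$, and the sequence lives in a compact set, so the whole sequence converges to $\Lambda^\star$. Alternatively, strong concavity gives $g(\Lambda^\star)-g(\Lambda)\ge\beta_{\lambda,2}\|\Lambda-\Lambda^\star\|^2$ over the convex feasible set. Since $g(\Lambda^{(r)})\uparrow g(\Lambda^\star)$ by continuity along the subsequence and monotonicity, $\|\Lambda^{(r)}-\Lambda^\star\|\to0$ directly.

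The main obstacle is Step 2. It needs care to pass the within-sweep block optimality conditions to the limit: the conditions hold at different intermediate points, not at the sweep endpoints. This is exactly where the vanishing of successive differences from Step 1, which relies on the strong block concavity, is used. I would handle it by indexing the $M+1$ intermediate iterates of each sweep explicitly and showing that each converges to $\bar\Lambda$ along the subsequence.
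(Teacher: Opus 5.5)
Your argument is correct, but it takes a different route from the paper's. The paper's proof relies on a citation. It sets $F_U=-L_{\mathrm{reg}}(U,\cdot)$ and notes that $F_U$ is continuously differentiable and strongly convex on the compact product set $\mathcal B_\lambda$. It then identifies the sweep as a cyclic block nonlinear Gauss--Seidel method and invokes the pseudoconvex case of Grippo and Sciandrone's convergence theorem: a pseudoconvex objective with compact level sets has every limit point a global minimizer. Uniqueness then gives convergence of the whole sequence.

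You instead derive that conclusion from first principles, using the uniform block curvature $a_k\ge 2\beta_{\lambda,2}>0$, which the cited theorem does not need. This curvature gives the sufficient-increase inequality $g(\text{new})-g(\text{old})\ge\beta_{\lambda,2}\|\lambda_k^+-\lambda_k\|^2$, so the within-sweep steps are square-summable and vanish. That is what lets the block variational inequalities, which hold at different intermediate points, pass to a common limit and combine into the full first-order condition on the product set.

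The paper's route is shorter and would still give optimality of limit points even if the block maximizers were not unique; strong convexity is used there only to pin down the limit. Yours is self-contained and shows exactly where $\beta_{\lambda,2}>0$ enters. Your alternative Step~3 also gives the computable bound $\|\Lambda^{(r)}-\Lambda^\star(U)\|^2\le\bigl(g(\Lambda^\star(U))-g(\Lambda^{(r)})\bigr)/\beta_{\lambda,2}$, a natural starting point for rate estimates.

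One minor point applies to both arguments. Your Step~1 inequality, like the level-set hypothesis behind the paper's citation, uses that the block being replaced already lies in its box. This holds for the zero initialization used in the experiments and, in general, from the end of the first sweep onward, so the analysis should start there.
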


\noindent\textit{The proof is provided in Supporting Information, Section S1.4.}

A prescribed terminal condition
$\lambda_M=\nabla_x\Phi(x_M)$ may instead be imposed when appropriate.
In the bounded dual formulation used here, $\lambda_M$ is optimized as part
of the multiplier sequence. Algorithm~\ref{alg:hpdpo} summarizes the
default iteration. We use $\mathcal R_k(x_k,U)$ to denote the suffix rollout from stage $k$
to the terminal time. The forward-policy/backward-dual order reflects the causal propagation of
the state and the backward orientation of dual information in classical
optimal control. The finite-dimensional block-coordinate formulation does
not require this specific order, however. Alternating or reversed
Gauss--Seidel sweeps may also be used
\cite{Wright2015,GrippoSciandrone2000}. In the numerical experiments, the
alternative ordering produced essentially the same policies and state
trajectories, indicating that the reported solutions were not artifacts of
one particular sweep direction.
\begin{algorithm}[t]
\footnotesize
\caption{Hamiltonian primal--dual policy optimization (HPDPO)}
\label{alg:hpdpo}
\begin{algorithmic}[1]
\Require $U^{(0)}$, $\Lambda^{(0)}$, candidate-set rule $\mathcal G$, maximum iterations $R_{\max}$, relaxation factor $\omega_u$, tolerance $\varepsilon$
\Ensure $U^*$, $\Lambda^*$, and $X^*$
\State $X^{(0)}\leftarrow\mathcal R(U^{(0)})$
\For{$r=0,\ldots,R_{\max}-1$}
  \State $\widetilde U\leftarrow U^{(r)}$, $\widetilde\Lambda\leftarrow\Lambda^{(r)}$, $\widetilde X\leftarrow X^{(r)}$
  \Statex \textit{Model-based RL policy-improvement sweep}
  \For{$k=0,\ldots,M-1$}
    \State $\mathcal V\leftarrow\mathcal G(k,r,\widetilde U)$
    \ForAll{$v\in\mathcal V$}
      \State Form $U^{(k)}(v)$ using the accepted prefix and current suffix
      \State Reroll $X^{(k)}(v)=\mathcal R_k(\widetilde x_k,U^{(k)}(v))$
      \State Evaluate $\mathcal Q_{k,h}^{\mathrm{HP}}(v\mid\widetilde U,\widetilde\Lambda)$
    \EndFor
    \State $v_k^*\leftarrow\argmin_{v\in\mathcal V}\mathcal Q_{k,h}^{\mathrm{HP}}(v\mid\widetilde U,\widetilde\Lambda)$
    \Statex \textit{If local refinement is used, repeat the candidate evaluation in a smaller region around $v_k^*$.}
    \State $\widetilde u_k\leftarrow(1-\omega_u)\widetilde u_k+\omega_u v_k^*$
    \State $\widetilde X\leftarrow\mathcal R_k(\widetilde x_k,\widetilde U)$
  \EndFor
  \State $U^{(r+1)}\leftarrow\widetilde U$, $X^{(r+1)}\leftarrow\widetilde X$
  \Statex \textit{Hamiltonian dual sweep}
  \For{$k=M,M-1,\ldots,0$}
    \State Compute $\widehat\lambda_k$ from Eq.~\eqref{eq:lambda-closed-form}
    \State $\lambda_k^+\leftarrow\Pi_{[-\lambda_{\max},\lambda_{\max}]^n}(\widehat\lambda_k)$
    \State $\widetilde\lambda_k\leftarrow\lambda_k^+$
  \EndFor
  \State $\Lambda^{(r+1)}\leftarrow\widetilde\Lambda$
  \If{$\|U^{(r+1)}-U^{(r)}\|_\infty+\|\Lambda^{(r+1)}-\Lambda^{(r)}\|_\infty<\varepsilon$}
    \State \textbf{break}
  \EndIf
\EndFor
\State $U^*\leftarrow U^{(r+1)}$, $\Lambda^*\leftarrow\Lambda^{(r+1)}$, $X^*\leftarrow X^{(r+1)}$
\end{algorithmic}
\end{algorithm}


\section{Numerical examples and Validation Studies}
\label{sec:numerical-results}

We first evaluate HPDPO in four numerical examples. Examples~1
and~2 compare the numerical solution with exact solutions of nonsmooth scalar problems. Example~3 considers a four-state nonlinear ODE cascade model of Alzheimer's disease. The corresponding optimal-control problem is first solved using both the forward--backward sweep (FBS) method and HPDPO. A neural network with ReLU activation functions is then trained to approximate the analytical right-hand side of the ODE system. Because the resulting learned dynamics are generally nondifferentiable, the neural-network-based optimal-control problem is solved using HPDPO. The control and state trajectories obtained from the analytical FBS model, the analytical HPDPO model, and the learned ReLU HPDPO model are then compared. Example~4 applies the proposed method to a fully data-driven setting. After preprocessing the ADNI data and normalizing the amyloid-$\beta$ measurements to the interval $[0,1]$, we train a ReLU neural ODE on the longitudinal observations using the disease progression score as the independent variable. The resulting learned system is then used as the state equation in an optimal-control problem, which is solved with HPDPO. To assess consistency with an adjoint-based approach, the learned neural right-hand side is approximated by a smooth Legendre expansion, and the corresponding optimal-control problem is solved using the forward--backward sweep method as a reference.

Two additional examples are then used to evaluate the multidimensional action
search. Multidimensional Example~1 is a two-state compartmental system with two
simultaneous controls and a known exact switching solution. Multidimensional
Example~2 is a five-state nonlinear Alzheimer's disease model with two
simultaneous treatment controls, for which the HPDPO result is compared with
the numerical solution reported in the original model study.

\subsection{Example 1:  Nonsmooth Problem with an Exact Solution}
\label{sec:constructed-nonsmooth-benchmark}

We first consider a scalar optimal-control problem whose optimal trajectory
crosses the nondifferentiable point \(x=0\). Since a closed-form solution is
available, this example allows direct comparison of the computed control,
state, crossing time, and objective value with their exact counterparts.

The problem is
\begin{equation}
\label{eq:constructed-problem}
\begin{aligned}
\min_{u(\cdot)}\quad
J(u)
&=
\frac{\gamma}{2}\bigl(x(2)-x_d\bigr)^2
+\frac{1}{2}\int_{0}^{2}u(t)^2\,dt,\\
\text{subject to}\quad
\dot{x}(t)
&=
|x(t)|+u(t),\\
x(0)&=-a,\\
0&\leq u(t)\leq u_{\max},
\end{aligned}
\end{equation}
where \(a>0\), \(\gamma>0\), and
\begin{equation}
\label{eq:constructed-xd}
x_d
=
a+\frac{2a}{\gamma(e^2-1)}.
\end{equation}
The target \(x_d\) is selected so that the optimal trajectory crosses
\(x=0\), where the dynamics are continuous but nondifferentiable. For the numerical experiment, we use
\begin{equation}
\label{eq:constructed-parameters}
a=0.2,
\qquad
\gamma=1,
\qquad
u_{\max}=0.2.
\end{equation}

\subsubsection{Analytical solution}

The detailed minimum-effort derivation is provided in Supporting Information,
Section S2. The expressions used to obtain the exact solution are retained here.

\begin{equation}
\label{eq:u-minus-general}
u_-(t)
=
\frac{2ae^t}{e^{2\tau}-1},
\qquad
0\leq t\leq\tau.
\end{equation}

\begin{equation}
\label{eq:u-plus-general}
u_+(t)
=
\frac{2y\,e^{2-t}}{e^{2(2-\tau)}-1},
\qquad
\tau\leq t\leq2.
\end{equation}

\begin{equation}
\label{eq:tau-y-star}
\tau^*=1,
\qquad
y^*=a.
\end{equation}

Substituting \eqref{eq:tau-y-star} into
\eqref{eq:u-minus-general} and \eqref{eq:u-plus-general} gives the exact
optimal control
\begin{equation}
\label{eq:exact-control}
u^*(t)
=
\begin{cases}
\dfrac{2ae^t}{e^2-1},
&0\leq t\leq1,\\[8pt]
\dfrac{2ae^{2-t}}{e^2-1},
&1<t\leq2.
\end{cases}
\end{equation}
Solving the corresponding linear state equations on the two sides of the
crossing and imposing continuity at \(t=1\) gives
\begin{equation}
\label{eq:exact-state}
x^*(t)
=
\frac{a\left(e^t-e^{2-t}\right)}{e^2-1},
\qquad
0\leq t\leq2.
\end{equation}
The exact trajectory satisfies
\begin{equation}
x^*(0)=-a,
\qquad
x^*(1)=0,
\qquad
x^*(2)=a.
\end{equation}

The optimal control attains its maximum at the crossing time:
\begin{equation}
u^*(1)
=
\frac{2ae}{e^2-1}.
\end{equation}
For \(a=0.2\),
\begin{equation}
u^*(1)
\approx0.1701836
<
u_{\max}=0.2.
\end{equation}
Thus, the control bound is inactive at the optimum, confirming that the
unconstrained minimum-effort construction above is admissible.

\subsubsection{Numerical comparison}

HPDPO is applied directly to the nonsmooth vector field
\begin{equation}
f(x,u)=|x|+u,
\end{equation}
without differentiating the absolute-value term. Figure
\ref{fig:constructed-exact-vs-saddle} compares the computed control and state
with the exact solutions in \eqref{eq:exact-control} and
\eqref{eq:exact-state}.

\begin{figure*}[ht]
    \centering
    \includegraphics[width=0.95\textwidth]{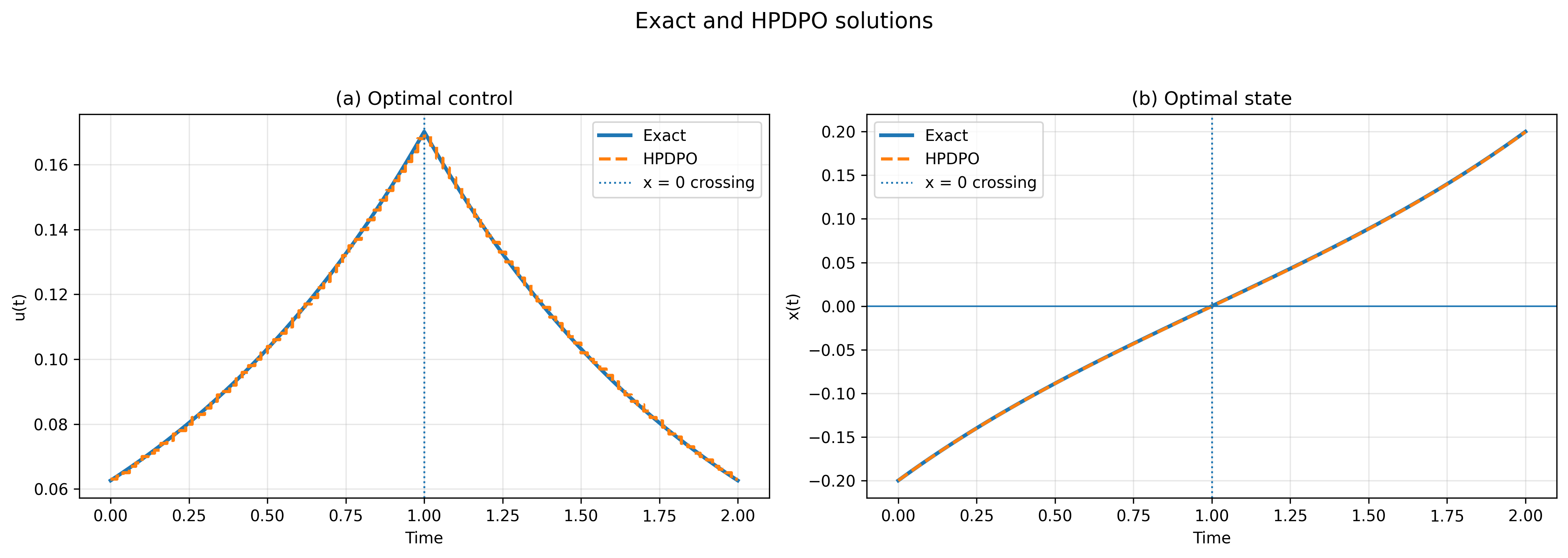}
    \caption{
    Comparison of the exact optimal control
    and state trajectories with those obtained using HPDPO for Example 1. The vertical
    dotted line marks the exact crossing time \(t=1\), at which the state
    passes through the nondifferentiable point \(x=0\).
    }
    \label{fig:constructed-exact-vs-saddle}
\end{figure*}

To quantify the numerical accuracy of the method, let
\(\widehat{u}_k\) and \(\widehat{x}_k\) denote the computed control and state,
respectively, and let \(u^*(t_k)\) and \(x^*(t_k)\) denote the exact solutions
evaluated on the same grid. The root-mean-square errors are defined by
\begin{equation}
\label{eq:constructed-rmse}
\begin{aligned}
\operatorname{RMSE}_u
&=
\left[
\frac{1}{M}
\sum_{k=0}^{M-1}
\left(
\widehat{u}_k-u^*(t_k)
\right)^2
\right]^{1/2},\\
\operatorname{RMSE}_x
&=
\left[
\frac{1}{M+1}
\sum_{k=0}^{M}
\left(
\widehat{x}_k-x^*(t_k)
\right)^2
\right]^{1/2}.
\end{aligned}
\end{equation}
These quantities measure the average discrepancies between the computed and
exact control and state trajectories over the full time horizon. We also
report the objective difference \(\lvert J_h-J_h^*\rvert\), where \(J_h^*\)
is obtained by evaluating the exact control using the same time
discretization, state solver, and quadrature rule used for the numerical
methods.

Table~\ref{tab:constructed-accuracy} compares HPDPO with two widely used
model-free reinforcement-learning methods: proximal policy optimization (PPO)
and soft actor--critic (SAC) \cite{Schulman2017,Haarnoja2018}. PPO is an
on-policy actor--critic method based on a clipped policy-update objective,
whereas SAC is an off-policy actor--critic method that uses replay data and
entropy-regularized policy updates. All three methods use the same time
horizon, time step, control bounds, RK4 transition map, and discrete
objective.

PPO and SAC were each trained for \(250{,}000\) environment transitions.
Because one episode contains
\[
M=\frac{T}{\Delta t}=\frac{2}{0.02}=100
\]
control decisions, this training budget corresponds to \(2{,}500\)
full-horizon-equivalent episodes. Because PPO and SAC involve stochastic
training, each method was evaluated over five independent random seeds.
Table~\ref{tab:constructed-accuracy} reports the mean and standard deviation
across these five runs.

\begin{table*}[ht]
\centering
\small
\caption{
Numerical comparison of HPDPO, PPO, and SAC for Example 1. PPO and SAC
results are reported as mean \(\pm\) standard deviation over five independent
random seeds.
}
\label{tab:constructed-accuracy}
\begin{tabular}{lccc}
\hline
Quantity & HPDPO & PPO & SAC \\
\hline
Time step \(\Delta t\)
& \(0.02\)
& \(0.02\)
& \(0.02\) \\
Optimization/training budget
& \(25\) outer iterations
& \(2.5\times10^{5}\) transitions
& \(2.5\times10^{5}\) transitions \\
Discrete objective \(J_h\)
& \(1.45\times10^{-2}\)
& \((1.4933\pm0.0147)\times10^{-2}\)
& \((1.5586\pm0.0428)\times10^{-2}\) \\
Control RMSE, \(\operatorname{RMSE}_u\)
& \(4.99\times10^{-4}\)
& \((1.732\pm0.201)\times10^{-2}\)
& \((3.409\pm0.723)\times10^{-2}\) \\
State RMSE, \(\operatorname{RMSE}_x\)
& \(2.12\times10^{-4}\)
& \((1.473\pm0.308)\times10^{-2}\)
& \((1.421\pm0.494)\times10^{-2}\) \\
Objective difference, \(\lvert J_h-J_h^*\rvert\)
& \(7.26\times10^{-8}\)
& \((4.52\pm1.47)\times10^{-4}\)
& \((1.10\pm0.43)\times10^{-3}\) \\
\hline
\end{tabular}
\end{table*}

The HPDPO control and state trajectories are nearly indistinguishable from
the exact solutions at the scale of
Fig.~\ref{fig:constructed-exact-vs-saddle}. Across the five independent
reinforcement-learning runs, PPO and SAC exhibited substantially larger
control and objective errors than HPDPO. Their average state errors were also
larger by approximately two orders of magnitude. PPO produced the smaller
average control error and objective gap, whereas the average state RMSEs of
PPO and SAC were similar. HPDPO therefore recovered the known optimal control
and corresponding state trajectory with substantially smaller numerical
errors on this deterministic fixed-initial-state benchmark.

\subsection{Example 2: Nonsmooth Benchmark}
\label{sec:published-nonsmooth-benchmark}

We next consider Example~15 of \cite{NooriSkandari2016}, a nonsmooth
optimal-control problem with a known exact solution. In contrast to
Example~1, the control objective in this problem is independent of the state,
while the state trajectory passes through a nondifferentiable point. The
example therefore tests whether the proposed method can recover the optimal
control and propagate the resulting trajectory through the nonsmooth
dynamics.

The problem is
\begin{equation}
\label{eq:example15-problem}
\begin{aligned}
\min_{u(\cdot)}\quad
J(u)
&=
\int_{0}^{2}u(t)\bigl(u(t)-t\bigr)\,dt,\\
\text{subject to}\quad
\dot{x}(t)
&=
-\left|x(t)-\frac{\mu}{2}\right|
+\left|\frac{\mu}{2}\right|
+\frac{2\bigl(u(t)+1\bigr)}{t+2}
-1,\\
x(0)&=0.1,\\
0&\leq x(t)\leq1,\\
-1&\leq u(t)\leq1,
\qquad 0\leq t\leq2.
\end{aligned}
\end{equation}

Following \cite{NooriSkandari2016}, we set
\begin{equation}
\mu=1.
\end{equation}
The state equation then becomes
\begin{equation}
\label{eq:example15-dynamics-mu1}
\dot{x}(t)
=
-\left|x(t)-0.5\right|
+0.5
+\frac{2\bigl(u(t)+1\bigr)}{t+2}
-1,
\end{equation}
which is continuous but nondifferentiable at \(x=0.5\). The exact solution reported in
\cite{NooriSkandari2016} is
\begin{equation}
\label{eq:example15-exact-control}
u^*(t)=\frac{t}{2},
\qquad
0\leq t\leq2,
\end{equation}
with optimal objective value
\begin{equation}
\label{eq:example15-exact-objective}
J^*=-\frac{2}{3}.
\end{equation}

The corresponding exact state trajectory is
\begin{equation}
\label{eq:example15-exact-state}
x^*(t)
=
\begin{cases}
0.1e^t,
&0\leq t\leq\ln(5),\\[6pt]
1-2.5e^{-t},
&\ln(5)<t\leq2.
\end{cases}
\end{equation}
The trajectory reaches the nondifferentiable state value \(x=0.5\) at
\begin{equation}
\label{eq:example15-crossing-time}
t_s=\ln(5),
\end{equation}
since
\begin{equation}
x^*\bigl(\ln(5)\bigr)=0.5.
\end{equation}

\subsubsection{Numerical comparison}

HPDPO is applied directly to
\eqref{eq:example15-dynamics-mu1}, without differentiating the nonsmooth term
\(\lvert x-0.5\rvert\). The interval \([0,2]\) is discretized using
\(\Delta t=0.01\), and the control candidates are selected from uniformly
spaced values in \([-1,1]\). Figure~\ref{fig:published-exact-vs-saddle}
compares the HPDPO control and state trajectories with the exact solutions in
\eqref{eq:example15-exact-control} and
\eqref{eq:example15-exact-state}.
\begin{table*}[ht]
\centering
\small
\caption{
Numerical comparison of HPDPO, PPO, and SAC for Example 2. PPO and SAC
results are reported as mean \(\pm\) standard deviation over five independent
random seeds.
}
\label{tab:published-benchmark-accuracy}
\begin{tabular}{lccc}
\hline
Quantity & HPDPO & PPO & SAC \\
\hline
Time step \(\Delta t\)
& \(0.01\)
& \(0.01\)
& \(0.01\) \\

Optimization/training budget
& \(8\) outer iterations
& \(2.5\times10^{5}\) transitions
& \(2.5\times10^{5}\) transitions \\

Discrete objective \(J_h\)
& \(-0.661675\)
& \(-0.2434 \pm 0.1775\)
& \(-0.4629 \pm 0.1342\) \\

Control RMSE, \(\operatorname{RMSE}_u\)
& \(7.30\times10^{-17}\)
& \(0.4480 \pm 0.1027\)
& \(0.2945 \pm 0.1256\) \\

State RMSE, \(\operatorname{RMSE}_x\)
& \(4.56\times10^{-3}\)
& \(0.4343 \pm 0.0978\)
& \(0.2463 \pm 0.0622\) \\

Discrete objective gap,
\(\lvert J_h-J_h^*\rvert\)
& \(<10^{-14}\)
& \(0.4183 \pm 0.1775\)
& \(0.1987 \pm 0.1342\) \\

Feasible runs
& \(1/1\)
& \(2/5\)
& \(5/5\) \\
\hline
\end{tabular}
\end{table*}
\begin{figure*}[ht]
    \centering
    \includegraphics[width=0.95\textwidth]{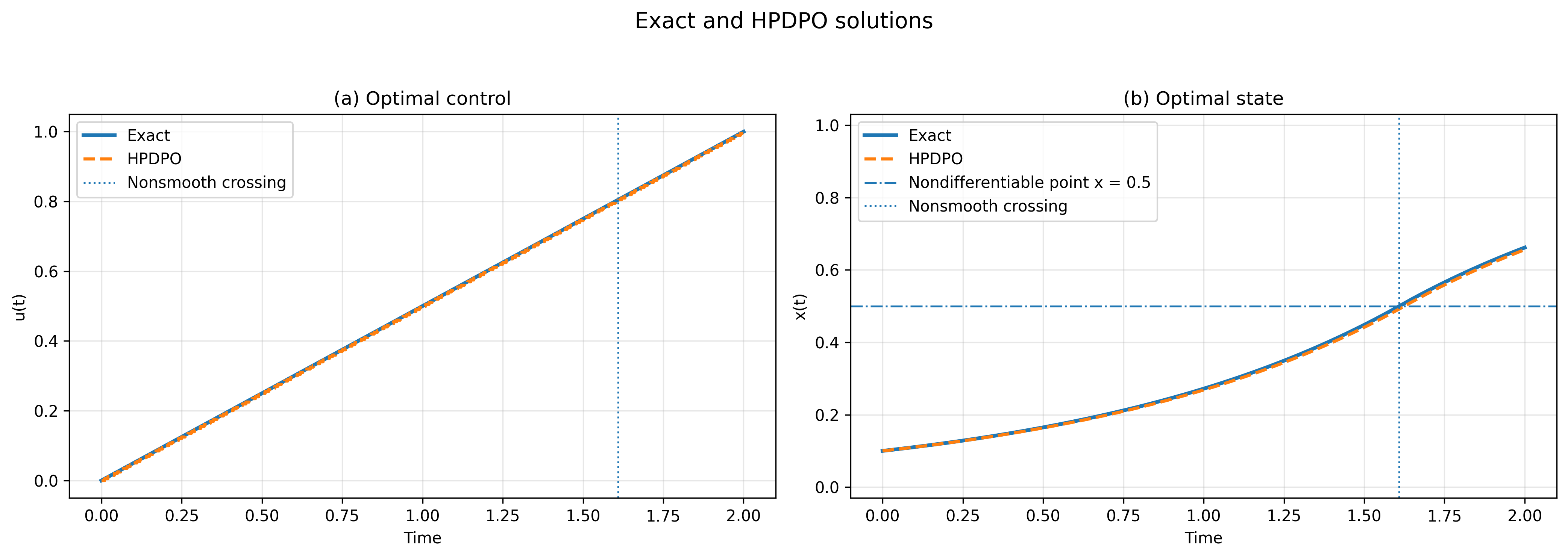}
    \caption{
    Comparison of the exact optimal control and state trajectories with those
    obtained using HPDPO for Example 2. The horizontal dash-dotted line marks the
    nondifferentiable state value \(x=0.5\), and the vertical dotted line marks
    the exact crossing time \(t=\ln(5)\).
    }
    \label{fig:published-exact-vs-saddle}
\end{figure*}

The control and state errors are evaluated using the root-mean-square
definitions in \eqref{eq:constructed-rmse}. The exact control is evaluated at
the left endpoints of the control intervals, consistently with the discrete
policy representation. The discrete reference objective \(J_h^*\) is obtained
by evaluating the exact control using the same time grid, RK4 state solver, and
quadrature rule as the numerical methods. For \(\Delta t=0.01\), this gives
\[
J_h^*=-0.661675.
\]

For comparison, the same problem was solved using PPO and SAC. Each
reinforcement-learning method was trained for \(250{,}000\) environment
transitions. Since each full-horizon episode contains
\[
M=\frac{T}{\Delta t}=\frac{2}{0.01}=200
\]
control decisions, this training budget corresponds to \(1{,}250\)
full-horizon-equivalent episodes. Because PPO and SAC are stochastic, each
method was evaluated over five independent random seeds. Table
\ref{tab:published-benchmark-accuracy} reports the mean and standard deviation
across these five runs. We also report the number of feasible runs, since not
all PPO runs satisfied the state constraint.

HPDPO accurately recovers the known optimum while operating directly on the
nonsmooth dynamics. Its control agrees with the exact discrete control to
numerical precision, and its discrete objective gap is below \(10^{-14}\).
Across the five reinforcement-learning runs, SAC produced smaller average
control, state, and objective errors than PPO, and all SAC runs satisfied the
state constraint. In contrast, only two of the five PPO runs were feasible.
Nevertheless, both reinforcement-learning methods produced substantially
larger errors than HPDPO. These results show that HPDPO provides a much more
accurate solution for this deterministic fixed-initial-state nonsmooth
benchmark.
\subsection{Example 3: Nonlinear Cascade with Analytical and Learned Dynamics}
\label{sec:cascade-example}

The third study considers a four-state nonlinear cascade model motivated by
previous computational formulations of the Alzheimer's disease biomarker
cascade \cite{Petrella2019,Petrella2024}. We first compare HPDPO with the
forward--backward sweep (FBS) method using the known analytical dynamics.
We then replace the untreated right-hand side by a learned ReLU neural network
and apply HPDPO directly to the resulting nonsmooth model.

\subsubsection{Analytical cascade model}

Let
\begin{equation}
x(t)
=
\begin{pmatrix}
A(t)\\
\tau_p(t)\\
N(t)\\
C(t)
\end{pmatrix}
\in\mathbb{R}^4.
\end{equation}
The controlled dynamics are
\begin{equation}
\label{eq:cascade-dynamics}
\begin{aligned}
\dot{A}
&=
\lambda_A A(K_A-A)-uA,\\
\dot{\tau}_p
&=
\lambda_{\tau_p A}A
+
\lambda_{\tau_p}\tau_p(K_{\tau_p}-\tau_p),\\
\dot{N}
&=
\lambda_{N\tau_p}\tau_p
+
\lambda_N N(K_N-N),\\
\dot{C}
&=
\lambda_{CN}rN
+
\lambda_C C(K_C-C).
\end{aligned}
\end{equation}
The control acts directly only on \(A\), while its effects on
\(\tau_p\), \(N\), and \(C\) are transmitted through the cascade. The initial
condition is
\begin{equation}
\label{eq:cascade-initial-condition}
x(0)
=
\begin{pmatrix}
0.05\\
0\\
0\\
0
\end{pmatrix},
\end{equation}
and the parameter vector is
\begin{equation}
\label{eq:cascade-parameters}
\begin{aligned}
\theta
=
(&\lambda_A,K_A,
\lambda_{\tau_p A},\lambda_{\tau_p},K_{\tau_p},
\lambda_{N\tau_p},\lambda_N,K_N,
\lambda_{CN},r,\lambda_C,K_C)\\
=
&(0.08,1.0,
0.025,0.05,1.0,
0.025,0.05,1.0,
0.001,25.0,0.05,1.0).
\end{aligned}
\end{equation}

We consider the finite-horizon optimal-control problem
\begin{equation}
\label{eq:cascade-ocp}
\begin{aligned}
\min_{u(\cdot)}
\quad
J(u)
&=
\int_0^T
\left[
C(t)+\alpha u(t)^2
\right]\,dt,\\
\text{subject to}\quad
\dot{x}(t)
&=
f_{\mathrm{true}}(x(t),u(t)),\\
x(0)&=x_0,\\
0&\leq u(t)\leq0.2,
\end{aligned}
\end{equation}
where
\begin{equation}
T=200,
\qquad
\alpha=0.1.
\end{equation}
The objective balances cumulative cognitive impairment, represented by
\(C(t)\), against treatment effort. No terminal cost is included. Because the
analytical dynamics are known in this example, they provide both an FBS
reference solution and supervised targets for constructing a learned
right-hand-side model.

\subsubsection{Learned ReLU approximation}
\label{sec:cascade-learned-dynamics}

To separate the unknown autonomous dynamics from the known treatment
mechanism, we write
\begin{equation}
f_{\mathrm{true}}(x,u)
=
f_{\mathrm{true}}(x,0)+b(x,u),
\end{equation}
where
\begin{equation}
b(x,u)
=
\begin{pmatrix}
-uA\\
0\\
0\\
0
\end{pmatrix}.
\end{equation}
A feedforward ReLU network
\(g_\phi:\mathbb{R}^4\rightarrow\mathbb{R}^4\)
is trained to approximate only the untreated right-hand side,
\begin{equation}
\label{eq:nn-rhs-target}
g_\phi(x)
\approx
f_{\mathrm{true}}(x,0).
\end{equation}
The corresponding controlled learned model is therefore
\begin{equation}
\label{eq:hybrid-neural-control}
\dot{x}
=
g_\phi(x)
+
\begin{pmatrix}
-uA\\
0\\
0\\
0
\end{pmatrix}.
\end{equation}
Thus, the autonomous dynamics are learned, while the known treatment
mechanism remains explicit.

Training states are sampled independently from the feasible region
\begin{equation}
\label{eq:feasible-box}
\Omega
=
[0,1]\times[0,1.5]\times[0,1.5]\times[0,1.5].
\end{equation}
For samples \(x_i\sim\operatorname{Uniform}(\Omega)\), the corresponding
targets are
\begin{equation}
y_i=f_{\mathrm{true}}(x_i,0),
\end{equation}
and the network parameters are obtained by minimizing
\begin{equation}
\label{eq:rhs-training-objective}
\min_{\phi}
\frac{1}{N}
\sum_{i=1}^{N}
\left\|
g_\phi(x_i)-y_i
\right\|^2.
\end{equation}

Before using the learned model in the control problem, we validate
\(g_\phi\) at both the derivative and trajectory levels.
Figure~\ref{fig:learned-dynamics-validation}(a) compares the componentwise
predictions of the learned right-hand side with the corresponding analytical
untreated right-hand side along a reference trajectory. The agreement for
\(A\), \(\tau_p\), \(N\), and \(C\) shows that the ReLU network accurately
reproduces the local vector field along the region visited by the system.
Figure~\ref{fig:learned-dynamics-validation}(b) provides a second validation
by integrating the learned dynamics and comparing the resulting state
trajectories with those of the analytical model under the same treatment.
The close trajectory agreement indicates that the right-hand-side
approximation errors do not accumulate substantially over the simulated time
horizon.

\begin{figure*}[ht]
    \centering
    \begin{minipage}[t]{0.48\textwidth}
        \centering
        \includegraphics[
            height=0.4\textheight,
            width=\textwidth,
            keepaspectratio
        ]{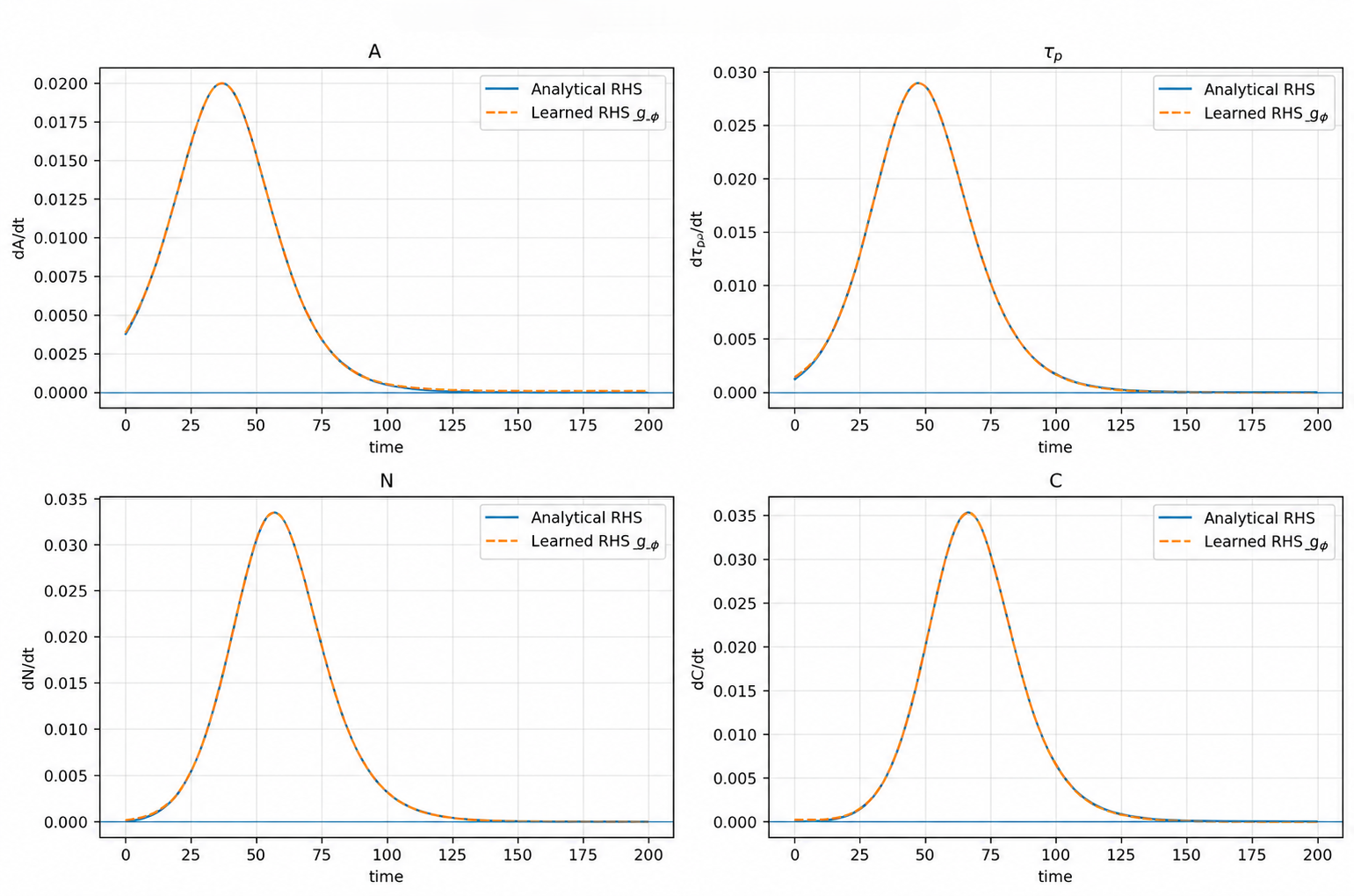}\\[-1mm]
        \small (a) Right-hand-side validation.
    \end{minipage}
    \hfill
    \begin{minipage}[t]{0.48\textwidth}
        \centering
        \includegraphics[
            height=0.4\textheight,
            width=\textwidth,
            keepaspectratio
        ]{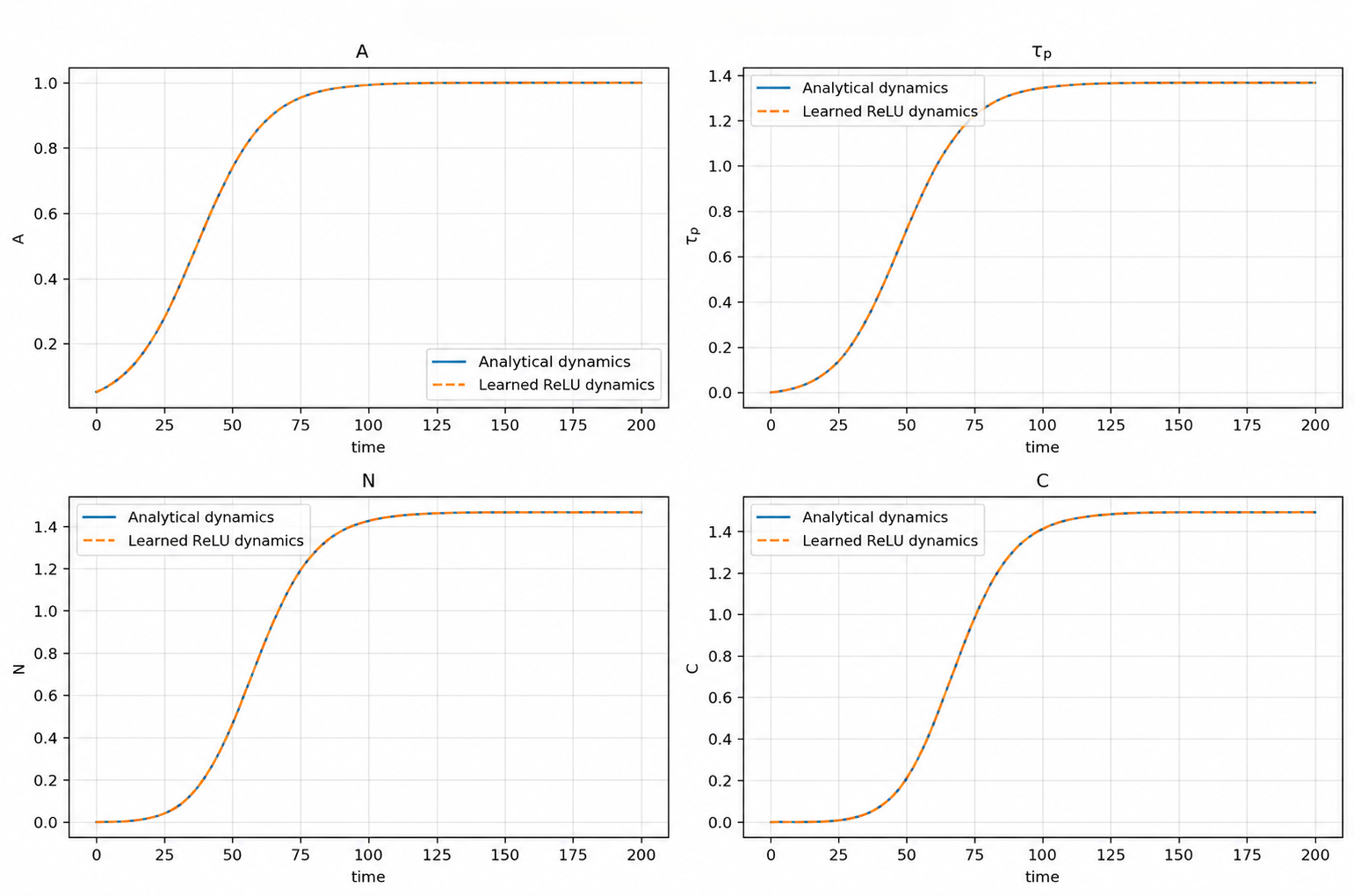}\\[-1mm]
        \small (b) Trajectory validation.
    \end{minipage}
    \caption{
    Validation of the learned right-hand side \(g_\phi\) for Example 3.
    Panel (a) compares the learned right-hand side with the analytical
    untreated vector field \(f_{\mathrm{true}}(x,0)\) along a reference
    trajectory for \(A\), \(\tau_p\), \(N\), and \(C\).
    Panel (b) compares the corresponding state trajectories generated by the
    analytical and learned ReLU dynamics under the same treatment.
    Agreement at both the derivative and trajectory levels validates the
    learned dynamics before their use within HPDPO.
    }
    \label{fig:learned-dynamics-validation}
\end{figure*}

\subsubsection{Optimal-control comparison}

We solve the optimal-control problem in \eqref{eq:cascade-ocp} using three
model--optimizer combinations:
\begin{enumerate}
    \item FBS with the analytical dynamics;
    \item HPDPO with the analytical dynamics;
    \item HPDPO with the learned ReLU dynamics.
\end{enumerate}

Figure~\ref{fig:cascade-control-comparison} compares the resulting treatment
policies. All three policies have the same qualitative structure: treatment
is strongest during the early portion of the horizon and then decreases
gradually toward zero. The agreement between the two HPDPO policies also
shows that replacing the analytical right-hand side by the learned ReLU model
does not substantially alter the resulting control strategy.

\begin{figure*}[ht]
    \centering
    \includegraphics[width=0.6\textwidth]
    {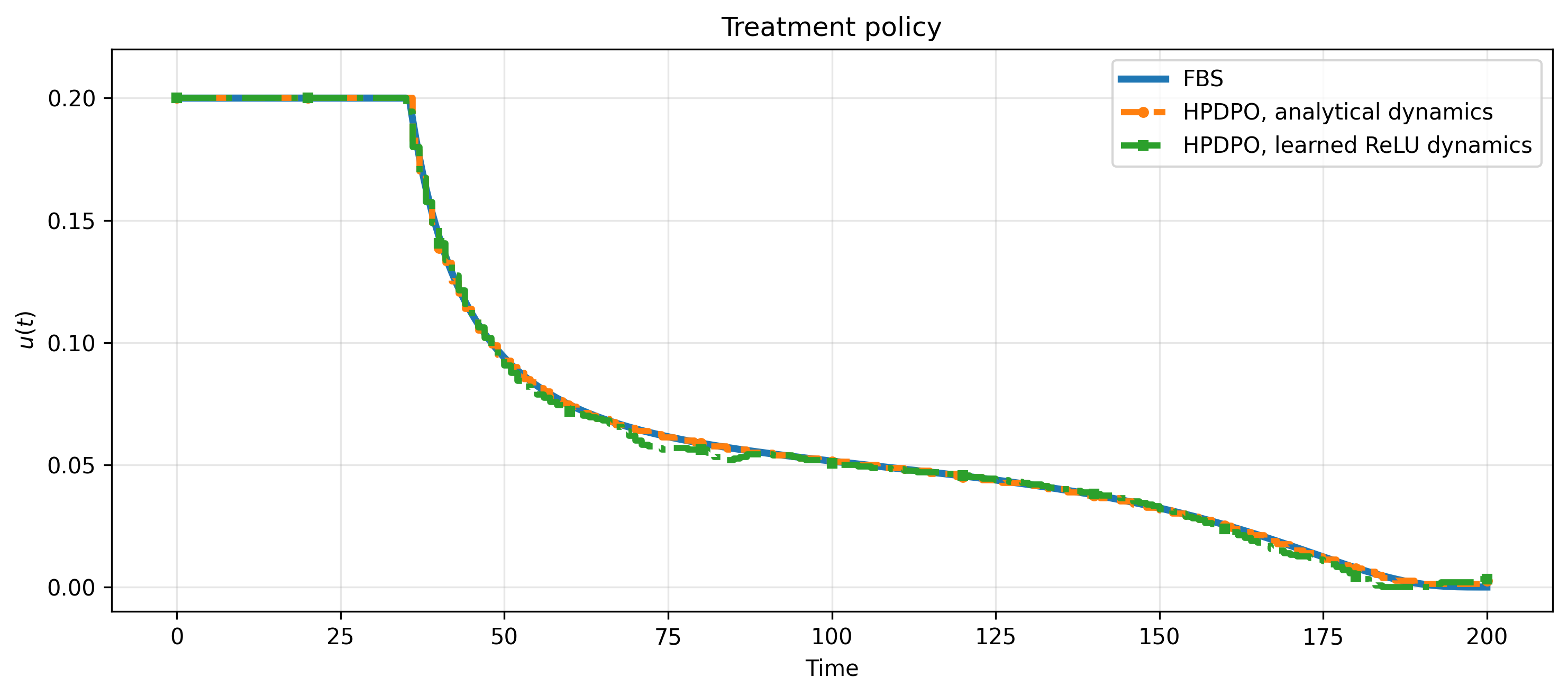}
    \caption{
    Comparison of the optimized treatment policies obtained using FBS with
    the analytical dynamics, HPDPO with the analytical dynamics, and HPDPO
    with the learned ReLU dynamics, Example 3.
    }
    \label{fig:cascade-control-comparison}
\end{figure*}

Figure~\ref{fig:cascade-state-comparison} compares the corresponding state
trajectories. For each method, the displayed trajectory is generated using
its own optimized treatment policy and its corresponding dynamical model.
The HPDPO trajectory based on the analytical model is therefore generated
using \(f_{\mathrm{true}}\), whereas the learned-dynamics HPDPO trajectory is
generated using \(g_\phi\) together with the explicit treatment term in
\eqref{eq:hybrid-neural-control}.

\begin{figure*}[ht]
    \centering
    \begin{minipage}[t]{0.48\textwidth}
        \centering
        \includegraphics[width=\textwidth]
        {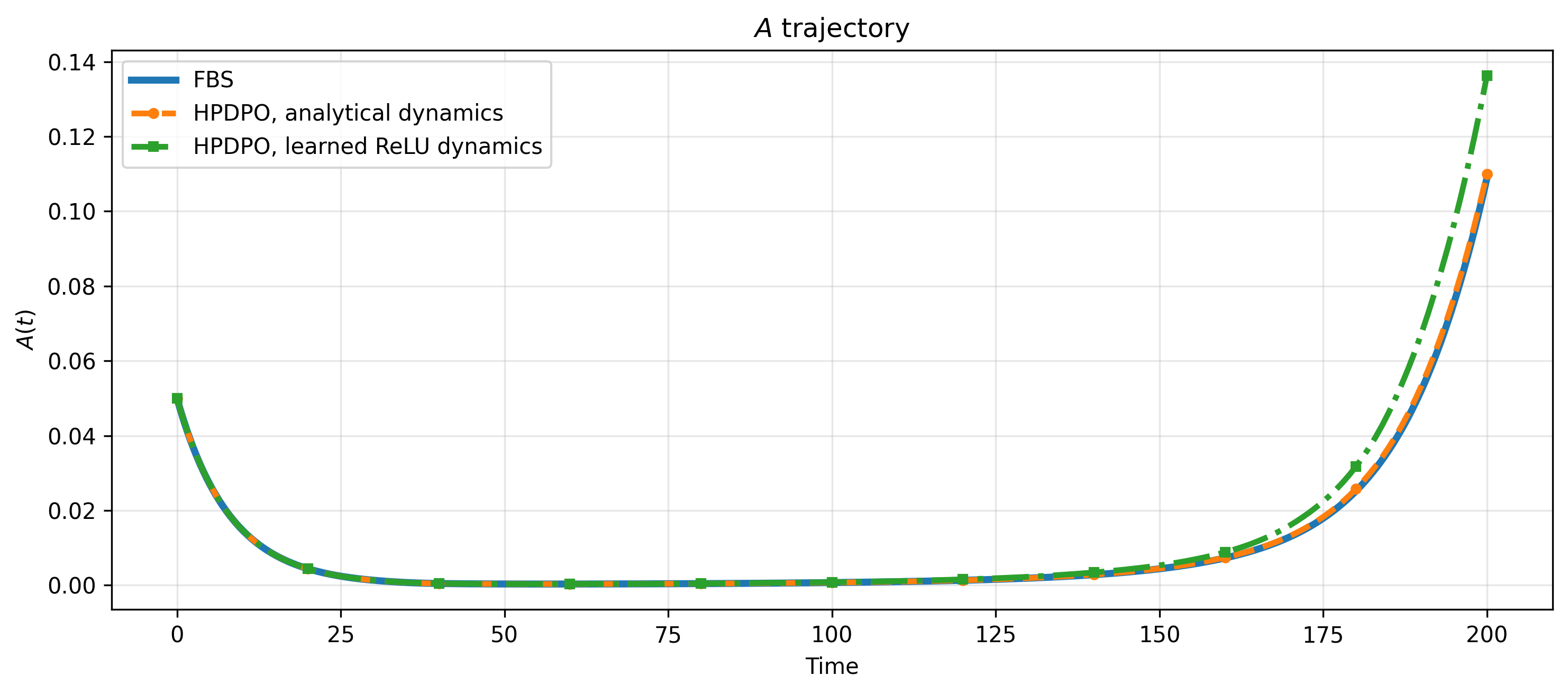}\\[-1mm]
        \small (a) \(A(t)\)
    \end{minipage}
    \hfill
    \begin{minipage}[t]{0.48\textwidth}
        \centering
        \includegraphics[width=\textwidth]
        {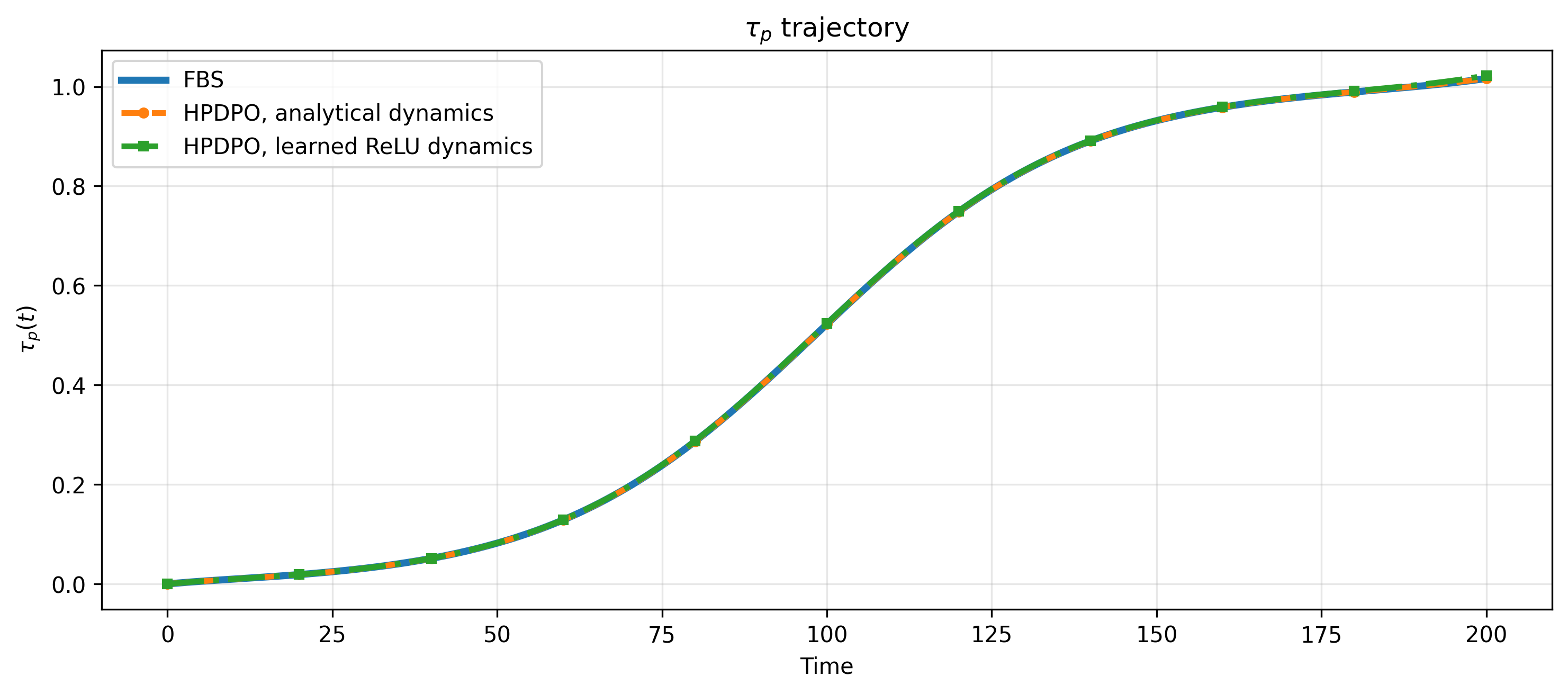}\\[-1mm]
        \small (b) \(\tau_p(t)\)
    \end{minipage}

    \vspace{0.5em}

    \begin{minipage}[t]{0.48\textwidth}
        \centering
        \includegraphics[width=\textwidth]
        {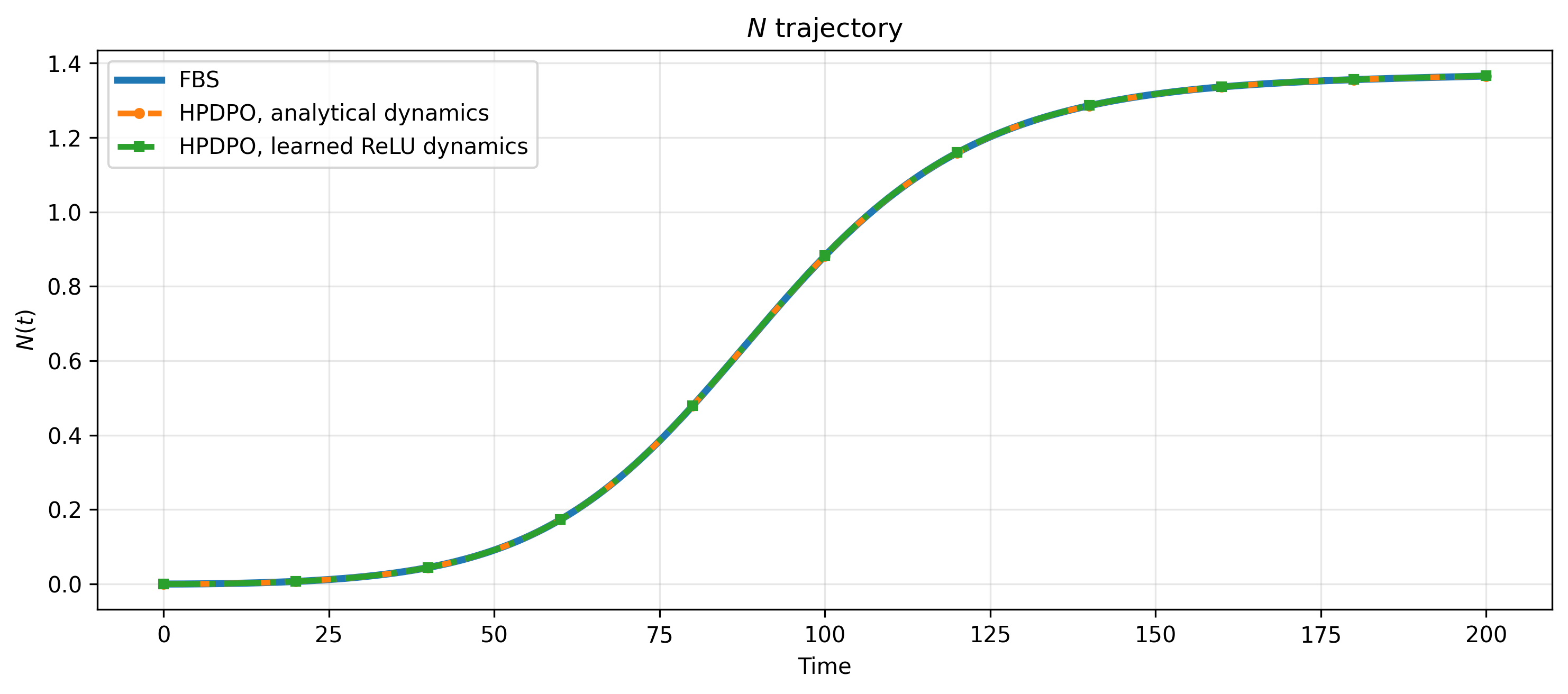}\\[-1mm]
        \small (c) \(N(t)\)
    \end{minipage}
    \hfill
    \begin{minipage}[t]{0.48\textwidth}
        \centering
        \includegraphics[width=\textwidth]
        {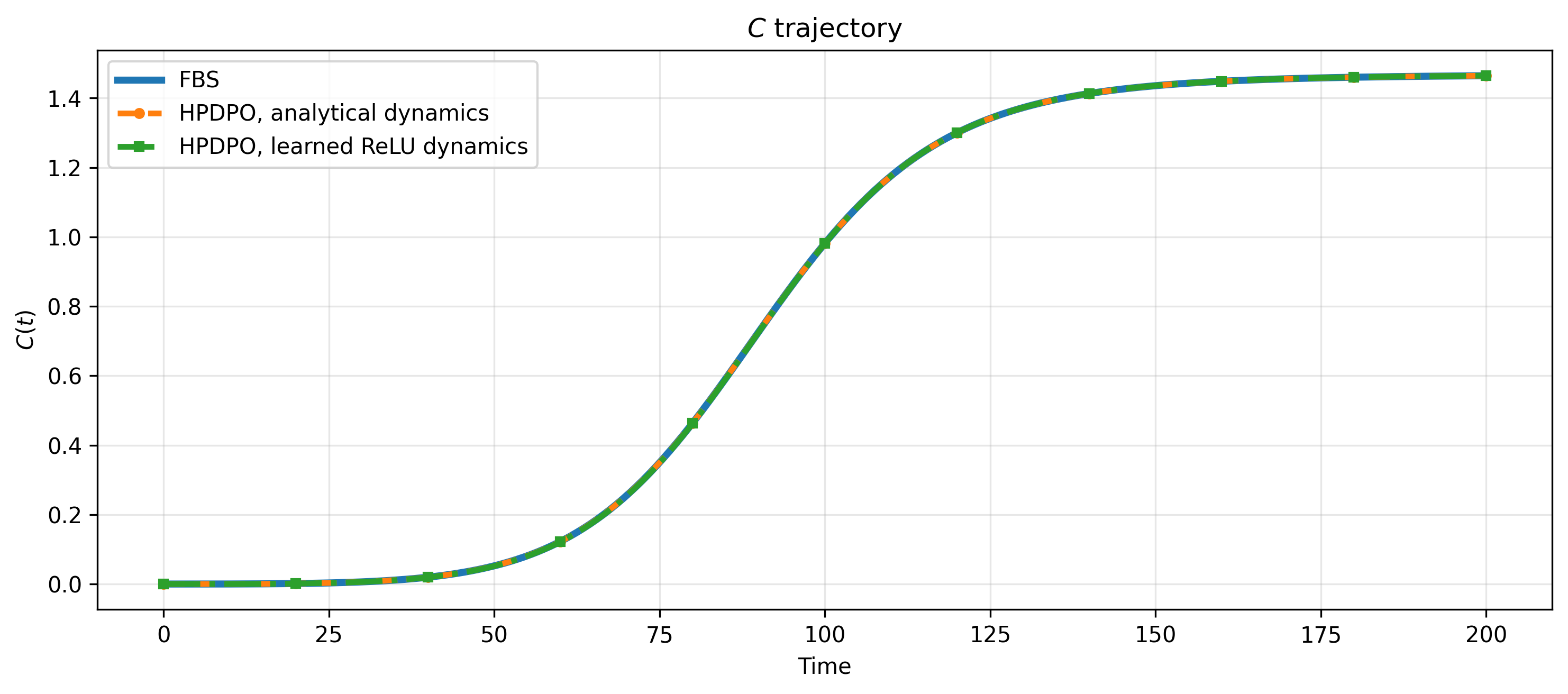}\\[-1mm]
        \small (d) \(C(t)\)
    \end{minipage}

    \caption{
    State trajectories obtained using FBS with the analytical dynamics,
    HPDPO with the analytical dynamics, and HPDPO with the learned ReLU
    dynamics, Example 3.
    }
    \label{fig:cascade-state-comparison}
\end{figure*}

For a common-model quantitative comparison, the policy obtained by HPDPO
using the learned ReLU dynamics is additionally evaluated under the
analytical dynamics. Table~\ref{tab:cascade-method-comparison} reports the
resulting objective error and the control and state RMSE values relative to
the FBS reference. All RMSE values are evaluated at common time points.

\begin{table*}[ht]
\centering
\caption{
Comparison of HPDPO using the analytical and learned ReLU dynamics with the
FBS reference for Example 3. For the learned-dynamics case, the HPDPO policy
is evaluated under the analytical dynamics when computing the reported state
and objective errors.
}
\label{tab:cascade-method-comparison}
\begin{tabular}{lcc}
\hline
Quantity
& HPDPO, analytical dynamics
& HPDPO, learned ReLU dynamics \\
\hline

Absolute objective error
& \(2.02\times10^{-3}\)
& \(1.83\times10^{-3}\) \\

Control RMSE
& \(1.28\times10^{-3}\)
& \(2.47\times10^{-3}\) \\

\(A\)-state RMSE
& \(3.64\times10^{-4}\)
& \(3.95\times10^{-3}\) \\

\(\tau_p\)-state RMSE
& \(8.75\times10^{-5}\)
& \(8.05\times10^{-4}\) \\

\(N\)-state RMSE
& \(1.46\times10^{-5}\)
& \(1.26\times10^{-4}\) \\

\(C\)-state RMSE
& \(2.24\times10^{-6}\)
& \(1.93\times10^{-5}\) \\
\hline
\end{tabular}
\end{table*}

Both HPDPO solutions closely agree with the FBS reference. Using the
analytical dynamics, HPDPO produces very small control, state, and objective
errors. Replacing the analytical right-hand side by the learned ReLU model
increases the state and control errors modestly, as expected from the
additional approximation step, but the resulting policy remains close to the
analytical-model solution. These results show that HPDPO can operate directly
with a learned nonsmooth dynamical model while retaining close agreement with
a derivative-based reference constructed from the known analytical system.

\subsection{Example 4: Data-Driven Anti-Amyloid Optimal Control}
\label{sec:data-driven-amyloid-control}

The final study applies HPDPO to a population-level amyloid Neural ODE
learned from longitudinal ADNI observations \cite{Weiner2010}.
The ADNI study protocols were approved by the institutional review boards
of the participating institutions, and written informed consent was obtained
from all participants or their legally authorized representatives.
Unlike the previous benchmark examples, no exact optimal-control solution
is available.
We therefore construct a differentiable approximation of the learned
right-hand side and use forward--backward sweep (FBS) on this smooth model as
a numerical reference.

Let
\begin{equation}
A(s)\in[0,1]
\end{equation}
denote normalized amyloid pathology, with larger values indicating greater
amyloid burden. The variable \(s\) represents the Alzheimer's disease
progression score (DPS) \cite{Jedynak2012,MehdipourGhazi2021}. For subject
\(i\) at visit \(j\),
\begin{equation}
\label{eq:amyloid-abc}
s_{ij}
=
\beta_i+\alpha_i t_{ij},
\end{equation}
where \(t_{ij}\) is the visit time and \(\alpha_i\) and \(\beta_i\) are fixed
subject-specific progression parameters.

\subsubsection{Learning the untreated amyloid dynamics}

The untreated population dynamics are modeled by
\begin{equation}
\label{eq:amyloid-neural-ode}
\frac{dA}{ds}
=
g_{\phi}(A),
\end{equation}
where
\begin{equation}
\label{eq:amyloid-neural-rhs}
g_{\phi}(A)
=
\operatorname{Softplus}\!\left(h_{\phi}(A)\right)A(1-A).
\end{equation}
Here, \(h_{\phi}\) is a feedforward neural network with two hidden layers,
64 units per layer, and ReLU activation functions. The ReLU activations make
the learned right-hand side continuous but generally nondifferentiable with
respect to \(A\). The Softplus transformation ensures a nonnegative untreated
progression rate, while the factor \(A(1-A)\) enforces zero untreated growth
at the boundaries of the normalized state interval.

A shared population anchor
\begin{equation}
\label{eq:amyloid-anchor}
A(0)=A_0
\end{equation}
is learned jointly with the network parameters. Learning \(A_0\) is useful
because the observations do not generally contain a measurement exactly at
\(s=0\). The anchor is initialized using training observations closest to
\(s=0\), and a weak penalty limits its deviation from this data-based
initial estimate.

For each observation, the Neural ODE is integrated from the shared anchor to
the corresponding disease-progression coordinate \(s_{ij}\). The principal
data-fitting term is the subject-balanced loss
\begin{equation}
\label{eq:amyloid-data-loss}
\mathcal{L}_{\mathrm{data}}
=
\frac{1}{N_{\mathrm{subj}}}
\sum_{i=1}^{N_{\mathrm{subj}}}
\frac{1}{n_i}
\sum_{j=1}^{n_i}
\left[
\widehat{A}(s_{ij})
-
A_{ij}^{\mathrm{obs}}
\right]^2.
\end{equation}
This weighting prevents subjects with a larger number of visits from
dominating the population fit.

The principal training objective is
\begin{equation}
\label{eq:amyloid-training-loss}
\mathcal{L}_{\mathrm{train}}
=
\mathcal{L}_{\mathrm{data}}
+
\lambda_0
\left(A_0-\widetilde{A}_0\right)^2,
\end{equation}
where \(\widetilde{A}_0\) is the initial data-based anchor estimate.
The implementation additionally includes a weak \(L_2\) regularization term
on the learned progression rate. The complete architecture, optimization
settings, and regularization details are reported in Supporting Information,
Section S5.

The data are divided at the subject level into training, validation, and
test subsets. After preprocessing, the data set contains 315 eligible
subjects and 876 amyloid observations. The learned population anchor is
\begin{equation}
\label{eq:learned-amyloid-anchor}
A_0=0.3694.
\end{equation}

Figure~\ref{fig:amyloid-population-fit} shows the observed normalized amyloid
values together with the learned population trajectory. This figure is
intended to illustrate the population-level disease-progression trend rather
than individualized subject predictions.

\begin{figure*}[ht]
    \centering
    \includegraphics[width=0.6\textwidth]
    {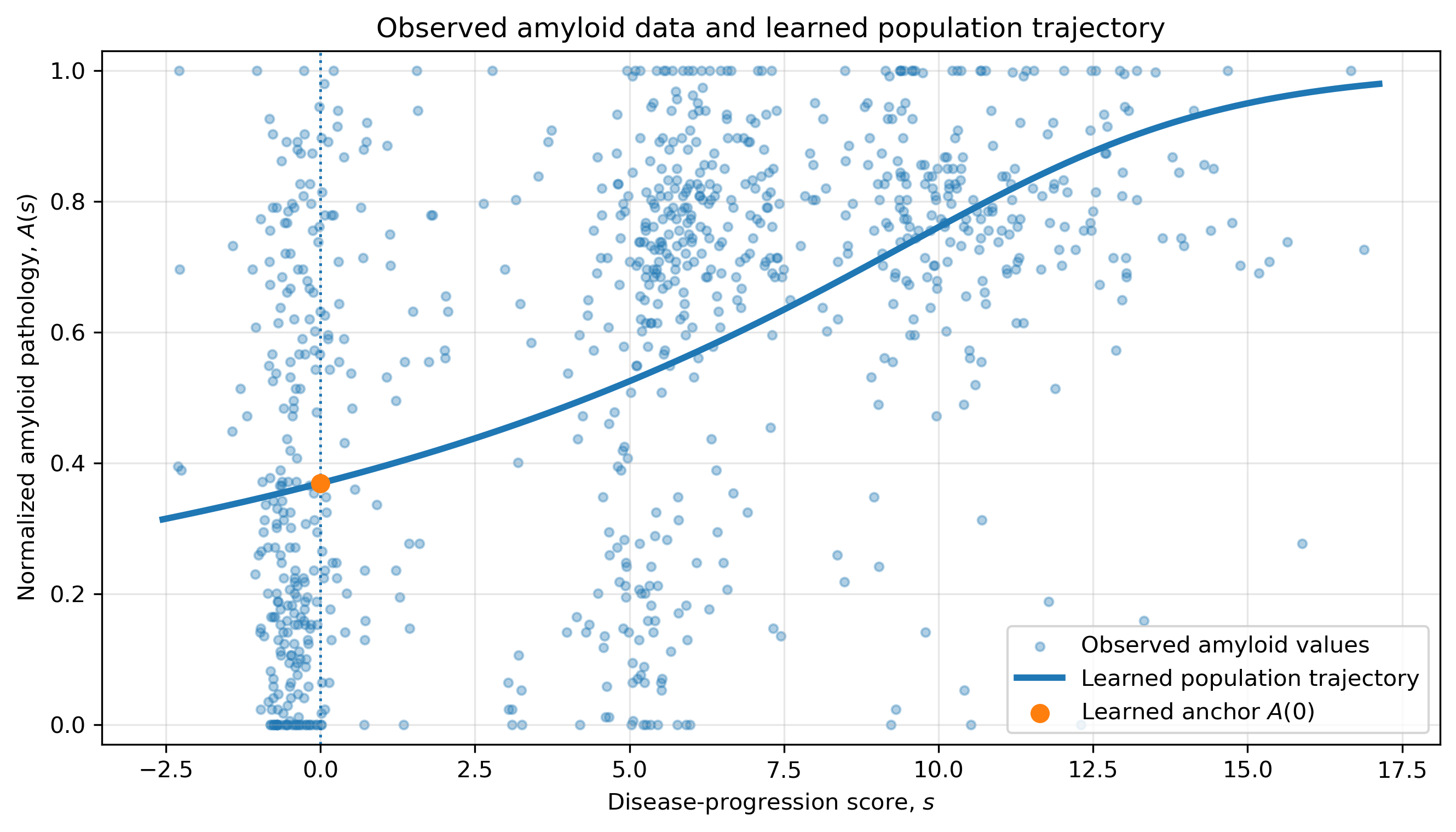}
    \caption{
    Observed normalized amyloid values and the learned population Neural ODE
    trajectory as functions of DPS. The orange point marks the learned
    population anchor \(A(0)\).
    }
    \label{fig:amyloid-population-fit}
\end{figure*}

\subsubsection{Controlled neural model}

Treatment is introduced through a state-dependent amyloid-clearance term,
giving
\begin{equation}
\label{eq:controlled-nn-amyloid}
\frac{dA}{ds}
=
g_{\phi}(A)-\kappa u(s)A(s),
\qquad
A(0)=A_0.
\end{equation}
The normalized treatment intensity satisfies
\begin{equation}
\label{eq:amyloid-control-bounds}
0\leq u(s)\leq u_{\max},
\end{equation}
where \(u=0\) represents no treatment and \(u=u_{\max}\) represents the
maximum allowed treatment intensity. The parameter \(\kappa\) controls the
strength of treatment-induced amyloid clearance.

The corresponding optimal-control problem is
\begin{equation}
\label{eq:amyloid-objective}
\begin{aligned}
\min_{u(\cdot)}
\quad
J(u)
&=
\int_{s_0}^{s_f}
\left[
A(s)+\rho u(s)^2
\right]\,ds,\\
\text{subject to}\quad
\frac{dA}{ds}
&=
g_{\phi}(A)-\kappa uA,\\
A(s_0)&=A_0,\\
0&\leq u(s)\leq u_{\max}.
\end{aligned}
\end{equation}
The objective balances cumulative amyloid burden against treatment effort.

The numerical settings are
\begin{equation}
\label{eq:amyloid-numerical-settings}
\begin{aligned}
s_0&=0,\qquad s_f=10,\qquad \Delta s=0.1,\\
\kappa&=1,\qquad \rho=0.1,\qquad u_{\max}=1.
\end{aligned}
\end{equation}
The treatment horizon is chosen within the range of DPS values represented
in the training data. The treatment parameters are prescribed for this
numerical experiment and are not estimated from the untreated ADNI
observations.

HPDPO is applied directly to the controlled Neural ODE in
\eqref{eq:controlled-nn-amyloid}. In particular, the method requires only
forward evaluations of \(g_{\phi}(A)\) and does not require derivatives or
subgradients of the ReLU network.

\subsubsection{Smooth Legendre approximation and FBS reference}

Because FBS requires differentiable dynamics, we construct a smooth
approximation of the learned untreated right-hand side over \(A\in[0,1]\):
\begin{equation}
\label{eq:legendre-approximation}
g_{\phi}(A)
\approx
g_L(A)
=
\sum_{m=0}^{p}
c_mP_m(2A-1),
\end{equation}
where \(P_m\) denotes the degree-\(m\) Legendre polynomial. A degree-\(9\)
expansion is used.

The corresponding smooth controlled model is
\begin{equation}
\label{eq:controlled-legendre-amyloid}
\frac{dA}{ds}
=
g_L(A)-\kappa u(s)A(s),
\qquad
A(0)=A_0.
\end{equation}
The initial condition, objective, treatment parameters, and control bounds
are kept unchanged. This differentiable approximation is then solved using
FBS.

Figure~\ref{fig:amyloid-rhs-approximation} compares the learned neural
right-hand side with its Legendre approximation. Their close agreement
provides the basis for using the smooth model as a numerical reference for
the original Neural ODE control problem.

\begin{figure*}[ht]
    \centering
    \includegraphics[width=0.6\textwidth]
    {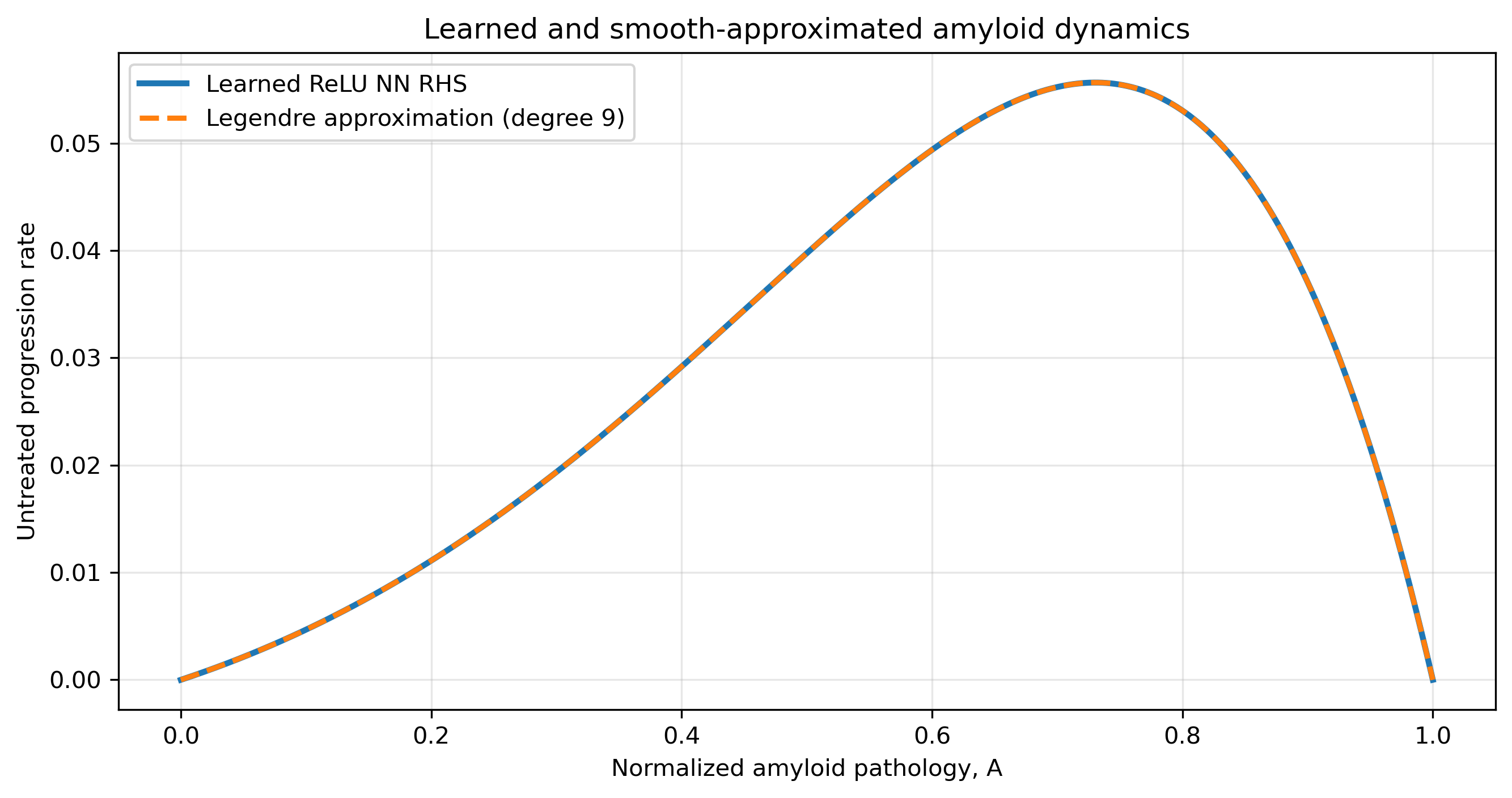}
    \caption{
    Learned amyloid right-hand side \(g_{\phi}(A)\) and its degree-\(9\)
    Legendre approximation \(g_L(A)\). The Legendre approximation is used
    only to construct the differentiable FBS reference.
    }
    \label{fig:amyloid-rhs-approximation}
\end{figure*}

\subsubsection{Optimal-control comparison}

We compare the two model--optimizer combinations
\begin{equation}
\label{eq:amyloid-comparison-methods}
\begin{aligned}
g_{\phi}
&\xrightarrow{\mathrm{HPDPO}}
\left(
u_{\mathrm{NN,HPDPO}},
A_{\mathrm{NN,HPDPO}}
\right),\\
g_L
&\xrightarrow{\mathrm{FBS}}
\left(
u_{L,\mathrm{FBS}},
A_{L,\mathrm{FBS}}
\right).
\end{aligned}
\end{equation}
The FBS solution serves as a reference for the smooth Legendre model; it is
not an exact solution of the original ReLU Neural ODE control problem.

Figure~\ref{fig:amyloid-control-comparison} compares the optimized treatment
policies and the resulting amyloid trajectories. The untreated Neural ODE
trajectory is included to illustrate the effect of treatment. In addition,
the FBS policy is evaluated under the original neural dynamics to provide a
common-model comparison between the two treatment policies.

\begin{figure*}[ht]
    \centering
    \includegraphics[width=\textwidth]
    {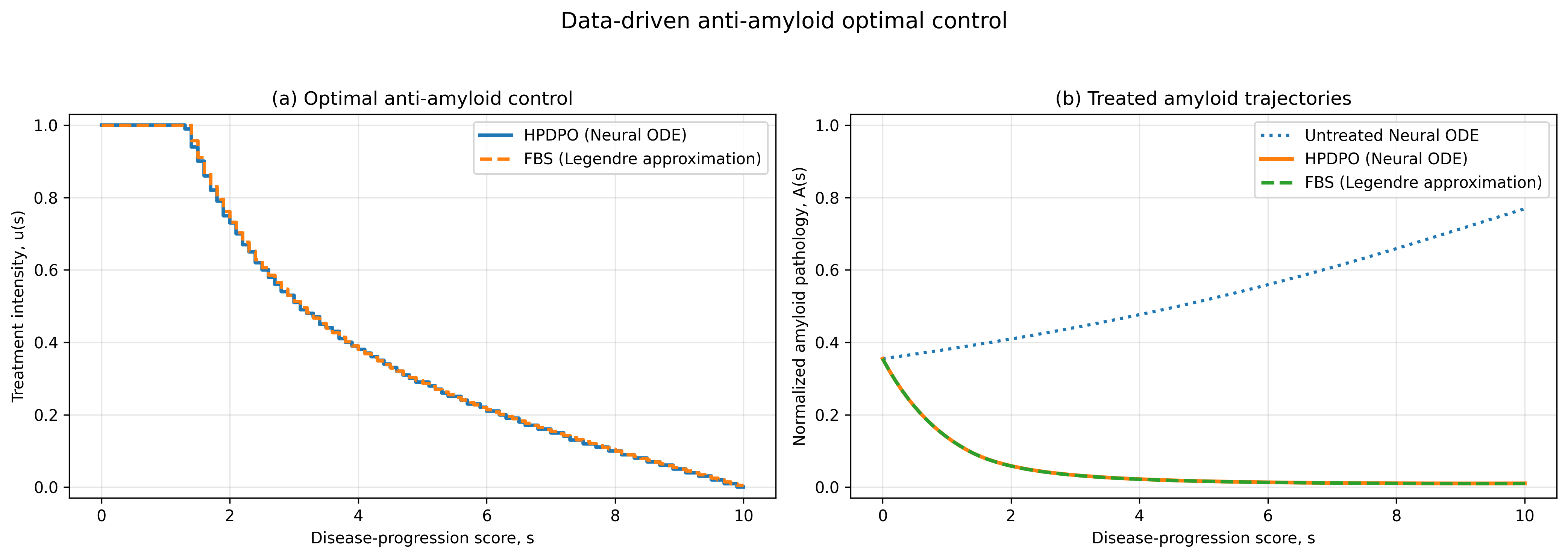}
    \caption{
    Data-driven anti-amyloid optimal control. Left: treatment policies
    obtained using HPDPO with the original Neural ODE and FBS with the smooth
    Legendre approximation. Right: untreated and treated amyloid
    trajectories. The FBS policy is additionally evaluated under the neural
    dynamics to provide a common-model comparison.
    }
    \label{fig:amyloid-control-comparison}
\end{figure*}

The numerical comparison is summarized in
Table~\ref{tab:amyloid-control-comparison}. The small right-hand-side
approximation error confirms that the degree-\(9\) Legendre expansion closely
reproduces the learned Neural ODE over the relevant state interval. The two
optimization procedures consequently produce similar treatment policies,
state trajectories, and objective values.

\begin{table*}[ht]
\centering
\caption{
Numerical comparison of the learned Neural ODE and its smooth Legendre
approximation, together with the corresponding HPDPO and FBS
optimal-control solutions for Example 4.
}
\label{tab:amyloid-control-comparison}
\begin{tabular}{lc}
\hline
Quantity & Value \\
\hline

RMSE of the Legendre approximation of the neural RHS
& \(3.80\times10^{-8}\) \\

RMSE between the optimized control policies
& \(4.55\times10^{-3}\) \\

RMSE between the resulting amyloid trajectories
& \(2.68\times10^{-4}\) \\

Objective value for the neural model with HPDPO control
& \(0.768578\) \\

Objective value for the Legendre model with FBS control
& \(0.768602\) \\

Absolute difference between the objective values
& \(2.38\times10^{-5}\) \\

Final amyloid value for the neural model with HPDPO control
& \(0.010284\) \\

Final amyloid value for the Legendre model with FBS control
& \(0.010016\) \\
\hline
\end{tabular}
\end{table*}

The close agreement between HPDPO and the FBS reference indicates that the
derivative-free optimization remains effective when applied directly to the
learned ReLU Neural ODE. In this example, HPDPO avoids replacing the
nondifferentiable learned dynamics during optimization, while producing a
control policy and amyloid trajectory that are close to those obtained from
the smooth reference model.
\newpage
\subsection{Multidimensional Example 1: Two-State Compartmental Problem with an Exact Solution}
\label{sec:multidim-automatica-two-control}

We first consider the finite-horizon compartmental control problem presented
by Blanchini et al.~\cite{Blanchini2023}. The state and control vectors are
\begin{equation}
x(t)
=
\begin{pmatrix}
x_1(t)\\
x_2(t)
\end{pmatrix}
\in\mathbb{R}^2,
\qquad
u(t)
=
\begin{pmatrix}
\alpha(t)\\
\beta(t)
\end{pmatrix}
\in\mathbb{R}^2.
\end{equation}
The problem is
\begin{equation}
\label{eq:multidim-automatica-problem}
\begin{aligned}
\min_{\alpha(\cdot),\beta(\cdot)}
\quad
J(\alpha,\beta)
&=
x_2(1),\\
\text{subject to}\quad
\dot{x}_1(t)
&=
-\alpha(t)x_1(t),\\
\dot{x}_2(t)
&=
\alpha(t)x_1(t)-\beta(t)x_2(t),\\
x(0)
&=
\begin{pmatrix}
1\\
0
\end{pmatrix},\\
1\leq\alpha(t)&\leq4,\qquad
2\leq\beta(t)\leq3,
\qquad 0\leq t\leq1.
\end{aligned}
\end{equation}
This example is especially useful for multidimensional validation because both
control components are optimized simultaneously while an analytical optimum is
available.

\subsubsection{Exact solution}

For the initial condition in Eq.~\eqref{eq:multidim-automatica-problem}, the exact
optimal control is bang--bang \cite{Blanchini2023}. The second component is
constant,
\begin{equation}
\label{eq:multidim-automatica-exact-beta}
\beta^*(t)=3,
\end{equation}
while the first component switches once:
\begin{equation}
\label{eq:multidim-automatica-exact-alpha}
\alpha^*(t)
=
\begin{cases}
4, & 0\leq t<t_s,\\
1, & t_s\leq t\leq1,
\end{cases}
\qquad
t_s
=
1-\frac{\ln 3}{2}
=
0.450693855666.
\end{equation}
For \(x(0)=(1,0)^{\mathsf T}\), the corresponding exact terminal objective is
\begin{equation}
\label{eq:multidim-automatica-exact-objective}
J^*
=
x_2^*(1)
=
0.103977422086.
\end{equation}

\subsubsection{Numerical comparison}

The interval \([0,1]\) is discretized using
\begin{equation}
\Delta t=0.01,
\qquad
M=100.
\end{equation}
For HPDPO, the two-dimensional action
\(u_k=(\alpha_k,\beta_k)^{\mathsf T}\) is optimized jointly at every time
point. We use a \(21\times21\) Cartesian-product grid over the full admissible
control rectangle, followed by two \(11\times11\) local joint refinements around
the current best action. The policy-smoothing weight is set to
\(\beta_u=0\), while
\(\beta_{\lambda,s}=1\) and
\(\beta_{\lambda,2}=0.05\).

To provide a direct reinforcement-learning comparison in the multidimensional
action setting, we also solve the same discretized problem using PPO and SAC.
All three numerical methods use the same initial condition, time grid, RK4
state transition, control bounds, and terminal objective. PPO and SAC use a
continuous two-dimensional action and are trained for \(250{,}000\)
environment transitions. After training, the learned policies are
evaluated deterministically. The control and state RMSE values are calculated
with respect to the exact solution sampled on the same time grid.

Table~\ref{tab:multidim-automatica-two-control} summarizes the results.
The computation times are wall-clock times measured on the same CPU
environment and include the optimization or training stage for each numerical
method.

\begin{table*}[ht]
\centering
\small
\caption{
Comparison of the exact solution, HPDPO, PPO, and SAC for the two-state,
two-control compartmental benchmark in Multidimensional Example 1.
}
\label{tab:multidim-automatica-two-control}
\begin{tabular}{lcccc}
\hline
Quantity
& Exact solution
& HPDPO
& PPO
& SAC \\
\hline

Objective \(J\)
& \(0.103977422086\)
& \(0.103977489605\)
& \(0.10516939\)
& \(0.12687182\) \\

Absolute objective difference
& \(0\)
& \(6.7519\times10^{-8}\)
& \(1.19197\times10^{-3}\)
& \(2.28944\times10^{-2}\) \\

Relative objective difference (\%)
& \(0\)
& \(6.4936\times10^{-5}\)
& \(1.14637124\)
& \(22.01862819\) \\

Control RMSE
& \(0\)
& \(0\)
& \(0.56527340\)
& \(0.90837831\) \\

State RMSE
& \(0\)
& \(1.5982\times10^{-4}\)
& \(1.642835\times10^{-2}\)
& \(3.393467\times10^{-2}\) \\

Switching time \(t_s\)
& \(0.450693855666\)
& \(0.45\)
& \(0.54\)
& \(0.72\) \\

Absolute switching-time error
& \(0\)
& \(6.9386\times10^{-4}\)
& \(8.930614\times10^{-2}\)
& \(2.6930614\times10^{-1}\) \\

Computation time (s)
& ---
& \(5.92\)
& \(93.70\)
& \(855.84\) \\

\hline
\end{tabular}
\end{table*}

HPDPO recovers the exact bang--bang control values,
\((4,3)\rightarrow(1,3)\), with the first control switching at
\(t=0.45\). The resulting objective,
\(J=0.103977489605\), differs from the analytical optimum by only
\(6.7519\times10^{-8}\), corresponding to a relative difference of
\(6.4936\times10^{-5}\%\). The switching-time error is
\(6.9386\times10^{-4}\), which is smaller than one control interval.

The PPO policy also approaches the exact terminal objective, with
\(J=0.10516939\) and a relative objective difference of approximately
\(1.15\%\), although its control and state trajectories deviate more from the
analytical solution. Its estimated switch occurs at \(t=0.54\). In this run,
SAC gives \(J=0.12687182\), corresponding to a relative difference of
approximately \(22.02\%\), with a later estimated switch at \(t=0.72\).

The measured wall-clock times were \(5.92\) s for HPDPO, \(93.70\) s for PPO,
and \(855.84\) s for SAC. Although computation time depends on the hardware,
software implementation, and training budget, the common CPU experiment shows
the additional training cost required by the neural-network RL baselines in
this benchmark. Together with the trajectory and objective comparisons, these
results provide a direct multidimensional validation of HPDPO against both the
known analytical optimum and standard continuous-control reinforcement-learning
methods.

\subsection{Multidimensional Example 2: Five-State Alzheimer's Disease Model with Two Treatment Controls}
\label{sec:multidim-alzheimer-cmo}

We next consider the controlled monomer--oligomer (CMO) model of Bulai et
al.~\cite{Bulai2025}. The model describes the interaction of amyloid monomers,
proto-oligomers, and oligomers and introduces two distinct simultaneous
treatment controls. The state and control vectors are
\begin{equation}
x(t)
=
\begin{pmatrix}
M(t)\\
U_2(t)\\
U_3(t)\\
U_4(t)\\
O(t)
\end{pmatrix}
\in\mathbb{R}^5,
\qquad
u(t)
=
\begin{pmatrix}
v(t)\\
w(t)
\end{pmatrix}
\in\mathbb{R}^2,
\end{equation}
where \(v(t)\) increases monomer degradation and \(w(t)\) increases oligomer
degradation. The symbol \(w\) is used here for the oligomer-degradation
control to avoid conflict with the vector notation \(u\); it corresponds to
the control denoted by \(u\) in \cite{Bulai2025}.

The controlled CMO dynamics used in the numerical experiment are
\begin{equation}
\label{eq:multidim-cmo-dynamics}
\begin{aligned}
\dot M={}&
sM\left(1-\frac{M}{K}\right)
-M(r_2U_2+r_3U_3+r_4U_4)
+(b+2\beta)U_3+(b+2\beta)U_4\\
&+2\beta U_2-a_{11}M^2-\delta M-vM,\\
\dot U_2={}&
-r_2MU_2-\beta U_2+(b+2\beta)U_3+2\beta U_4
-a_{23}U_2U_3-a_{24}U_2U_4\\
&-a_{22}U_2^2+a_{11}M^2,\\
\dot U_3={}&
-M(r_3U_3-r_2U_2)-(2\beta+b)U_3+(b+2\beta)U_4
-a_{23}U_2U_3\\
&-a_{34}U_3U_4-a_{33}U_3^2,\\
\dot U_4={}&
-M(r_4U_4-r_3U_3)-(b+3\beta)U_4
-a_{24}U_2U_4-a_{34}U_3U_4\\
&-a_{44}U_4^2+a_{22}U_2^2,\\
\dot O={}&
r_4MU_4+a_{23}U_2U_3+a_{24}U_2U_4+a_{34}U_3U_4
+a_{33}U_3^2\\
&+a_{44}U_4^2-m_cO^2-wO .
\end{aligned}
\end{equation}

Here, \(M(t)\) denotes the concentration of amyloid monomers,
\(U_2(t)\), \(U_3(t)\), and \(U_4(t)\) represent proto-oligomers
consisting of two, three, and four monomers, respectively, and \(O(t)\)
denotes the concentration of larger oligomers containing five or more
monomers. The model is nonlinear because of the quadratic aggregation terms,
cross-population interactions, and the state-dependent treatment terms
\(vM\) and \(wO\).

The initial condition is
\begin{equation}
\label{eq:multidim-cmo-initial-condition}
x(0)
=
\begin{pmatrix}
10^{-3}&0&0&0&0
\end{pmatrix}^{\mathsf T},
\qquad
T=1000\ {\rm s},
\end{equation}
and the treatment bounds are
\begin{equation}
\label{eq:multidim-cmo-control-bounds}
0\leq v(t)\leq0.03,
\qquad
0\leq w(t)\leq0.03.
\end{equation}

The parameter values are
\begin{equation}
\label{eq:multidim-cmo-parameters}
\begin{aligned}
&s=10^{-4},\quad K=1,\quad
r_2=r_3=r_4=100,\quad
b=10^{-3},\\
&\beta=10^{-4},\quad
\delta=5\times10^{-4},\quad
m_c=10^{-4},\\
&a_{11}=10,\quad
a_{22}=2.5,\quad
a_{23}=1.67,\quad
a_{24}=1.25,\\
&a_{33}=1.11,\quad
a_{34}=0.83,\quad
a_{44}=0.625.
\end{aligned}
\end{equation}

The CMO objective is
\begin{equation}
\label{eq:multidim-cmo-objective}
\begin{aligned}
J(v,w)
={}&
c_1M(T)+c_2O(T)\\
&+\int_0^T
\left[
c_3M+c_4O
+c_5v+c_6v^2
+c_7w+c_8w^2
+c_9vw
\right]\dd t,
\end{aligned}
\end{equation}
with
\begin{equation}
\label{eq:multidim-cmo-cost-weights}
c_1=c_2=c_3=c_4=1,\qquad
c_5=c_7=c_9=10^{-5},\qquad
c_6=c_8=2.
\end{equation}
The interaction term \(c_9vw\) explicitly couples the two treatment
components in the objective, providing a nonlinear biological benchmark with
joint multidimensional control.

\subsubsection{Numerical comparison}

Bulai et al.~\cite{Bulai2025} solve the CMO optimal-control problem using
Pontryagin's Maximum Principle together with a forward--backward sweep
method. Their procedure iteratively solves the nonlinear state equations
forward in time and the associated five-dimensional adjoint system backward
in time, followed by an update of the two controls from the corresponding
optimality conditions.

For HPDPO, the treatment policy is piecewise constant over intervals of
\(\Delta t=5\ {\rm s}\). Within each control interval, the nonlinear state
system is integrated using RK4 with internal substeps no larger than
\(0.5\ {\rm s}\). At each policy time, both controls are varied jointly using
a \(21\times21\) global Cartesian-product grid followed by two
\(11\times11\) local joint refinements.

The published CMO solution is
\begin{equation}
J_{\rm pub}=0.1148,
\qquad
\operatorname{dose}(v)=1.439,
\qquad
\operatorname{dose}(w)=2.306,
\end{equation}
where
\begin{equation}
\operatorname{dose}(v)=\int_0^T v(t)\,\dd t,
\qquad
\operatorname{dose}(w)=\int_0^T w(t)\,\dd t.
\end{equation}
As an implementation check, our no-treatment objective is
\(0.2528554150\), compared with the published value \(0.2529\), while the
maximum-treatment objective is \(3.6267422646\), compared with \(3.619\).

HPDPO obtains
\begin{equation}
J_{\rm HPDPO}=0.1149777157,
\end{equation}
corresponding to a relative difference of \(0.154805\%\) from the published
objective. The corresponding integrated treatment doses are
\begin{equation}
\operatorname{dose}(v)=1.445625,
\qquad
\operatorname{dose}(w)=2.309850.
\end{equation}

For comparison, PPO and SAC are applied to the same discretized control
problem. The observation consists of the five state variables together with
normalized time, and both treatment components are generated simultaneously
by the policy. Each RL method is trained on CPU with a fixed transition
budget. For PPO, the best deterministic checkpoint within the training budget
is retained.

\begin{table*}[ht]
\centering
\small
\caption{
Comparison of the published CMO solution with HPDPO, PPO, and SAC for the
five-state, two-control Alzheimer's disease benchmark, Multidimensional Example 2.}
\label{tab:multidim-alzheimer-cmo-comparison}
\begin{tabular}{lcccc}
\hline
Quantity
& Published CMO
& HPDPO
& PPO
& SAC \\
\hline

Objective \(J\)
& \(0.114800\)
& \(0.114978\)
& \(0.146052\)
& \(0.118328\) \\

Relative objective difference (\%)
& \(0\)
& \(0.1548\)
& \(27.2227\)
& \(3.0731\) \\

Dose of monomer treatment
& \(1.439\)
& \(1.4456\)
& \(2.9783\)
& \(2.1902\) \\

Dose of oligomer treatment
& \(2.306\)
& \(2.3099\)
& \(1.1837\)
& \(2.6865\) \\

Computation time (s)
& ---
& \(1569.1\)
& \(335.1\)
& \(1003.7\) \\

\hline
\end{tabular}
\end{table*}

The comparison reveals different numerical behavior across the three
methods. HPDPO closely reproduces both the published objective and the total
allocation of the two treatments, despite not using the problem-specific
adjoint equations or the analytical control-update formulas required by the
forward--backward method. SAC also approaches the published objective, with a
relative difference of approximately \(3.1\%\), although its integrated
treatment doses differ more noticeably from the reference.

PPO is more sensitive to the learning configuration in this example. In our
experiments, increasing the GAE parameter to one and including a small
entropy term substantially improved the learned policy; nevertheless, its
best objective remains \(27.2\%\) above the published value and the relative
allocation of the two treatments differs from the forward--backward
solution. This behavior is consistent with the long-horizon nature of the
problem, in which each treatment decision influences the subsequent
nonlinear evolution of several coupled state variables.

The wall-clock results also illustrate the different computational
tradeoffs. PPO is the fastest of the tested implementations, whereas HPDPO
requires more computation because each policy update evaluates a large number
of joint treatment candidates through deterministic model rollouts. In
return, HPDPO provides the closest agreement with the published
forward--backward solution in both objective value and integrated treatment
dose. Thus, this example demonstrates the ability of HPDPO to recover a
high-accuracy solution for a nonlinear, coupled, multidimensional control
problem without requiring explicit derivatives of the state dynamics.

\subsection{Multidimensional Example 3: Five-control flood-management problem}
\label{sec:multidim-flood}

We next consider the six-state, five-control compartmental flood-management
problem studied by Blanchini et al.~\cite{Blanchini2023}. This example is used
to test HPDPO in a higher-dimensional control space, where a dense Cartesian
grid becomes computationally impractical.

The state and control vectors are
\[
x(t)=(x_1(t),\ldots,x_6(t))^\top\in\mathbb{R}^6,
\qquad
v(t)=(v_1(t),\ldots,v_5(t))^\top\in\mathbb{R}^5.
\]
The dynamics are given by
\begin{equation}
\label{eq:multidim-flood-dynamics}
\dot{x}(t)
=
\widetilde{A}x(t)
+
\widetilde{F}\operatorname{diag}(v(t))
\widetilde{G}x(t),
\end{equation}
where
\[
\widetilde{A}
=
\begin{pmatrix}
-0.1 & 0 & 0 & 0 & 0 & 0\\
0 & -0.1 & 0 & 0 & 0 & 0\\
0 & 0 & -0.2 & 0 & 0 & 0\\
0 & 0 & 0 & -0.2 & 0 & 0\\
0 & 0 & 0.1 & 0 & -0.1 & 0\\
0 & 0 & 0 & 0.1 & 0 & -0.2
\end{pmatrix},
\]
\[
\widetilde{F}
=
\begin{pmatrix}
-1 & -1 & -1 & 0 & 0\\
1 & 0 & 0 & -1 & 0\\
0 & 1 & 0 & 0 & -1\\
0 & 0 & 1 & 1 & 1\\
0 & 0 & 0 & 0 & 0\\
0 & 0 & 0 & 0 & 0
\end{pmatrix},
\qquad
\widetilde{G}
=
\begin{pmatrix}
1 & 0 & 0 & 0 & 0 & 0\\
1 & 0 & 0 & 0 & 0 & 0\\
1 & 0 & 0 & 0 & 0 & 0\\
0 & 1 & 0 & 0 & 0 & 0\\
0 & 0 & 1 & 0 & 0 & 0
\end{pmatrix}.
\]
Here, $\widetilde{A}$ describes the uncontrolled compartmental
dynamics, while $\widetilde{F}$ and $\widetilde{G}$ determine the
controlled flows between compartments. The initial condition and
time horizon are
\[
x(0)=(1,0,0,0,0,0)^\top,
\qquad T=12.
\]

The admissible control set is the five-dimensional box
\[
\mathcal{V}
=
\prod_{j=1}^{5}[v_j^-,v_j^+],
\]
with
\[
v^-=(0.15,0.16,0.17,0.18,0.19)^\top,
\qquad
v^+=(1.15,1.16,1.17,1.18,1.19)^\top.
\]
The objective is
\begin{equation}
\label{eq:multidim-flood-objective}
\min_{v(\cdot)\in\mathcal{V}} J(v)
=
\int_0^{12}
\left(x_1(t)+1.2x_5(t)+x_6(t)\right)\,dt.
\end{equation}
Unlike the previous two-control examples, a tensor-product action grid is
not practical here. For example, using 21 candidate values for each control
component would require
\[
21^5=4{,}084{,}101
\]
joint control vectors at every time point. We therefore use the dimension-adaptive derivative-free action-search
strategy described in the method section. At each time step, the candidate
set consists of a joint Sobol/QMC sample over the full
five-dimensional admissible control box, augmented by all
\[
2^5=32
\]
corners of the box. The best candidate is then refined using two local
five-dimensional Sobol/QMC searches in shrinking hyperrectangles.

In the reported experiment, the global search used 512 Sobol candidates and
each local refinement used 256 Sobol candidates. All five control components
were varied jointly. The temporal
step size was
\[
\Delta t=0.1.
\]

For comparison, we evaluate the HJB-derived reference control from
Blanchini et al.~\cite{Blanchini2023} on the same temporal grid. This gives
the reference value
\begin{equation}
J_{\mathrm{HJB}}
=
1.590377645860.
\end{equation}
\begin{table*}[ht]
\centering
\small
\caption{
Comparison of the same-grid HJB reference, HPDPO, PPO, and SAC for Multidimensional Example 3. The HJB reference is
evaluated on the same \(\Delta t=0.1\) temporal grid.}
\label{tab:flood_hpdpo}
\begin{tabular}{lcccc}
\hline
Quantity
& HJB reference
& HPDPO
& PPO
& SAC \\
\hline

Objective \(J\)
& \(1.5903776459\)
& \(1.5903775969\)
& \(1.5904215236\)
& \(1.6056206441\) \\

Relative objective difference (\%)
& \(0\)
& \(-3.08\times10^{-6}\)
& \(2.759\times10^{-3}\)
& \(9.58451\times10^{-1}\) \\

State RMSE
& \(0\)
& \(4.14\times10^{-6}\)
& \(2.59\times10^{-5}\)
& \(4.60\times10^{-3}\) \\

Control RMSE
& \(0\)
& \(9.34\times10^{-2}\)
& \(3.53\times10^{-1}\)
& \(5.20\times10^{-1}\) \\

Computation time (s)
& ---
& \(53.02\)
& \(186.26\)
& \(3978.21\) \\

\hline
\end{tabular}
\end{table*}
To provide a direct reinforcement-learning comparison in the
five-dimensional action space, we also solve the same discretized problem
using PPO and SAC. All three numerical methods use the same initial
condition, time horizon, RK4 state transition, control bounds, and objective.
For PPO and SAC, the observation consists of the six-dimensional state
together with normalized time, and the policy outputs all five control
components simultaneously. Both reinforcement-learning methods are trained
for \(250{,}000\) environment transitions on CPU and are evaluated
deterministically after training.

HPDPO attained
\begin{equation}
J_{\mathrm{HPDPO}}
=
1.5903775969,
\end{equation}
corresponding to a relative difference of approximately
\begin{equation}
-3.08\times10^{-6}\%
\end{equation}
from the same-grid HJB reference. The state RMSE is
\(4.14\times10^{-6}\), while the control RMSE is
\(9.34\times10^{-2}\).

PPO also gives an objective close to the HJB reference,
\begin{equation}
J_{\mathrm{PPO}}
=
1.5904215236,
\end{equation}
with a relative objective difference of \(0.002759\%\). The corresponding
state and control RMSE values are \(2.59\times10^{-5}\) and
\(3.53\times10^{-1}\), respectively.

For SAC, the obtained objective is
\begin{equation}
J_{\mathrm{SAC}}
=
1.6056206441,
\end{equation}
corresponding to a relative difference of \(0.958451\%\). Its state RMSE is
\(4.60\times10^{-3}\), and its control RMSE is \(5.20\times10^{-1}\).

Table~\ref{tab:flood_hpdpo} summarizes the comparison. Computation times are
wall-clock times measured in the same CPU environment for HPDPO, PPO, and
SAC. A computation time for the published HJB reference is not included.
HPDPO reproduces the HJB objective and state trajectory to numerical
precision on the common temporal grid. PPO also closely matches the reference
objective and state trajectory, although its control RMSE is larger. This
indicates that different five-dimensional control profiles can produce very
similar state trajectories and accumulated objective values in this problem.
SAC shows a larger deviation in both the control and state trajectories but
still gives an objective within approximately \(0.96\%\) of the HJB
reference in this run.

The measured wall-clock times were \(53.02\) s for HPDPO, \(186.26\) s for
PPO, and \(3978.21\) s for SAC. These timings depend on the hardware,
implementation, and training budget, but the common CPU experiment provides
a direct computational comparison under the settings used here.

The recovered HPDPO control structure is also consistent with the HJB
reference. The fourth control remains at its lower bound throughout the
horizon, while the fifth control remains essentially at its upper bound.
The second and third controls recover the expected lower-to-upper switching
behavior. On the \(\Delta t=0.1\) grid, HPDPO changes \(v_2\) near
\(t=7.0\)--\(7.1\), compared with the HJB switch at \(t=7.1\), and changes
\(v_3\) near \(t=5.0\), compared with the HJB switch at \(t=5.1\).
These differences are within one temporal grid interval.
\section{Discussion and Conclusions}
\label{sec:discussion-conclusion}

This work introduced HPDPO, a finite-horizon policy-optimization method that combines deterministic model rollout with a regularized Hamiltonian primal--dual objective. The method evaluates candidate controls through forward simulation and updates
the multiplier sequence through bounded block-coordinate maximization, avoiding
both differentiation of the transition dynamics and approximation of a global
value function.

Across two nonsmooth benchmarks, an analytical and learned nonlinear cascade, and an ADNI-trained Neural ODE, HPDPO produced control and state trajectories that closely agreed with exact solutions or derivative-based references. In the two benchmark comparisons, it also achieved smaller errors and substantially shorter computation times than the tested PPO and SAC implementations. The three additional multidimensional-control examples show that the same formulation can optimize simultaneous control components. HPDPO reproduces the exact switching structure of the two-state compartmental problem, closely matches the published objective and treatment doses of the five-state Alzheimer's disease model, and matches the same-grid HJB reference for the six-state, five-control flood problem using the QMC action search. These results support HPDPO as a practical approach for deterministic finite-horizon problems in which the transition model is reliable for simulation but unsuitable for derivative-based optimization.
The present implementation produces a trajectory-specific control sequence for
a prescribed initial condition; changes in the observed state would require
re-optimization or a receding-horizon implementation. Its computational cost
also increases with the horizon, candidate-set size, and control dimension.
For a fixed policy, the dual block-coordinate sweep converges to the unique
bounded dual maximizer under the conditions of
Proposition~\ref{prop:dual-coordinate-convergence}. Convergence of the full
alternating primal--dual iteration, however, remains to be established.
Future work will therefore focus on receding-horizon implementation, scalable
action search, and theoretical analysis of the full discrete saddle iteration.

\subsection*{Author Contributions}
\textit{K. Rabiei developed the methodology, implemented the algorithms, conducted the numerical experiments, analyzed the results, and drafted the manuscript. Z. Rui contributed to methodological development, scientific discussions, and manuscript review. X. Lu contributed to reviewing, editing, and checking the manuscript. W. Hao contributed to the conceptualization of the study, supervised the research, provided methodological guidance, and reviewed and edited the manuscript. All authors reviewed and approved the final manuscript.}

\subsection*{Acknowledgments}
\textit{K. R. and W. H. were supported by the National Institute of General Medical Sciences (NIGMS) through Grant No. R35GM146894. W. H. was also supported by the National Science Foundation (NSF) through Grant No. DMS-2533995 and by the Huck Chair in AI Mathematical Modeling from The Pennsylvania State University’s Huck Institutes of the Life Sciences.}

\subsection*{Conflicts of Interest}
\textit{The authors declare that they have no conflicts of interest.
}
\subsection*{Use of Artificial Intelligence Tools}

\textit{ChatGPT (OpenAI) was used for language editing, improving clarity, and organizing portions of the manuscript. All scientific content, analyses, interpretations, and final decisions were reviewed and approved by the authors.}

\subsection*{Data Availability Statement}
\textit{The source code supporting the findings of this study is available at the following GitHub repository: https://github.com/LRabiei/HPDPO-derivative-free-control}

\clearpage
\appendix
\section*{Supplementary Information}
\setcounter{section}{0}
\setcounter{equation}{0}
\setcounter{figure}{0}
\setcounter{table}{0}
\renewcommand{\thesection}{S\arabic{section}}
\renewcommand{\thesubsection}{S\arabic{section}.\arabic{subsection}}
\renewcommand{\theequation}{S\arabic{equation}}
\renewcommand{\theHequation}{supp.\arabic{equation}}
\renewcommand{\thefigure}{S\arabic{figure}}
\renewcommand{\thetable}{S\arabic{table}}
\section{Theoretical Proofs}
The statements and proofs below provide the detailed derivations supporting the theoretical results summarized in the main Research Article.

\subsection{Dual-block representation}
$\Lambda=(\lambda_0,\ldots,\lambda_M)$ as the stacked column vector
\begin{equation}
\Lambda
=
\begin{pmatrix}
\lambda_0\\
\lambda_1\\
\vdots\\
\lambda_M
\end{pmatrix}
\in\mathbb R^{(M+1)n}.
\end{equation}
To represent the consecutive differences appearing in the multiplier
smoothness penalty, define the first-difference matrix
\begin{equation}
D_M
=
\begin{pmatrix}
-1 & 1 & 0 & \cdots & 0\\
0 & -1 & 1 & \cdots & 0\\
\vdots & & \ddots & \ddots & \vdots\\
0 & \cdots & 0 & -1 & 1
\end{pmatrix}
\in\mathbb R^{M\times(M+1)},
\end{equation}
and its block extension
\begin{equation}
\mathcal D
=
D_M\otimes I_n
\in
\mathbb R^{Mn\times(M+1)n},
\end{equation}
where $I_n$ denotes the $n\times n$ identity matrix. The matrix $D_M$
acts on the time index, while the Kronecker product with $I_n$ applies the
same first-difference operation to the $n$ components of each multiplier
block. Hence,
\begin{equation}
\mathcal D\Lambda
=
\begin{pmatrix}
\lambda_1-\lambda_0\\
\lambda_2-\lambda_1\\
\vdots\\
\lambda_M-\lambda_{M-1}
\end{pmatrix},
\end{equation}
and therefore
\begin{equation}
\|\mathcal D\Lambda\|^2
=
\sum_{k=0}^{M-1}
\|\lambda_{k+1}-\lambda_k\|^2.
\end{equation}
Similarly,
\begin{equation}
\|\Lambda\|^2
=
\sum_{k=0}^{M}\|\lambda_k\|^2.
\end{equation}
Consequently, for fixed $U$, the terms of $L_{\mathrm{reg}}$ that depend on
$\Lambda$ can be written compactly as
\begin{equation}
\label{eq:dual-compact-form}
\sum_{k=0}^{M}\lambda_k^Tb_k(U)
-\beta_{\lambda,s}\|\mathcal D\Lambda\|^2
-\beta_{\lambda,2}\|\Lambda\|^2.
\end{equation}
This representation makes the curvature of the dual subproblem explicit. For a twice-differentiable function, a uniform negative-definite Hessian
bound provides a standard characterization of strong concavity
\cite{BoydVandenberghe2004}.

\subsection{Strong concavity of the dual block}
\begin{lemma}[Strong concavity of the dual block]
\label{lem:dual-strong-concavity}
For every fixed admissible policy $U$, if $\beta_{\lambda,2}>0$, then
$L_{\mathrm{reg}}(U,\cdot)$ is
$2\beta_{\lambda,2}$-strongly concave with respect to the multiplier
sequence $\Lambda$.
\end{lemma}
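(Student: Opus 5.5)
The plan is to work directly from the compact representation \eqref{eq:dual-compact-form}. For fixed $U$, the rollout $X=\mathcal R(U)$ is fixed, and so are $J_h(U)$, the coefficient vectors $b_k(U)$, and the policy-smoothing term $\beta_u\sum\|u_{k+1}-u_k\|^2$. Every nonsmooth or black-box dependence of $L_{\mathrm{reg}}$ on the dynamics therefore enters only through constants and through the fixed linear coefficient vector $b(U)=(b_0(U),\ldots,b_M(U))\in\R^{(M+1)n}$. We can then write
\begin{equation*}
L_{\mathrm{reg}}(U,\Lambda)=c(U)+b(U)^T\Lambda-\beta_{\lambda,s}\,\Lambda^T\mathcal D^T\mathcal D\,\Lambda-\beta_{\lambda,2}\,\Lambda^T\Lambda ,
\end{equation*}
which is a quadratic function on $\R^{(M+1)n}$. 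The key point is that no differentiability in $U$ is needed. The fixed-$U$ map is a polynomial in $\Lambda$, so it is twice differentiable in $\Lambda$.

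The next step is to compute its Hessian:
\begin{equation*}
\nabla^2_\Lambda L_{\mathrm{reg}}(U,\Lambda)=-2\beta_{\lambda,s}\,\mathcal D^T\mathcal D-2\beta_{\lambda,2}\,I_{(M+1)n}.
\end{equation*}
Since $\beta_{\lambda,s}\ge 0$ and $\mathcal D^T\mathcal D$ is positive semidefinite, we have $\Lambda'^T\mathcal D^T\mathcal D\Lambda'=\|\mathcal D\Lambda'\|^2\ge 0$. This gives the uniform bound $\nabla^2_\Lambda L_{\mathrm{reg}}\preceq -2\beta_{\lambda,2}I$ for all $\Lambda$ and every admissible $U$. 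By the standard Hessian characterization of strong concavity cited before the lemma \cite{BoydVandenberghe2004}, $L_{\mathrm{reg}}(U,\cdot)$ is then $2\beta_{\lambda,2}$-strongly concave.

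To avoid depending only on the Hessian criterion, I would also verify the statement directly from the definition. The claim is that $\Lambda\mapsto L_{\mathrm{reg}}(U,\Lambda)+\beta_{\lambda,2}\|\Lambda\|^2$ is concave, which is the $\mu$-strong-concavity definition with $\mu=2\beta_{\lambda,2}$, i.e.\ subtracting $\tfrac{\mu}{2}\|\Lambda\|^2$. This function equals the constant $c(U)$, plus the linear term $b(U)^T\Lambda$, minus $\beta_{\lambda,s}\|\mathcal D\Lambda\|^2$. The last term is a nonnegative multiple of a convex function composed with a linear map, so it is concave. A sum of concave functions is concave, which finishes the argument.

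There is no real obstacle here. The only point needing care is the bookkeeping: $U$-dependence enters only through constants and the linear coefficients $b_k(U)$, so the possibly nonsmooth rollout never affects the curvature. A second check is that the Kronecker structure makes $\|\mathcal D\Lambda\|^2$ coincide with the smoothness penalty, which was established above. The constant is then exactly $2\beta_{\lambda,2}$, with the convention $\tfrac{\mu}{2}\|\cdot\|^2$.
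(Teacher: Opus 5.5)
Your proposal is correct and follows essentially the same argument as the paper: you compute $\nabla^2_\Lambda L_{\mathrm{reg}}=-2\beta_{\lambda,s}\mathcal D^T\mathcal D-2\beta_{\lambda,2}I_{(M+1)n}$, use positive semidefiniteness of $\mathcal D^T\mathcal D$ to get $\nabla^2_\Lambda L_{\mathrm{reg}}\preceq-2\beta_{\lambda,2}I_{(M+1)n}$, and invoke the Hessian characterization. Your additional check that $L_{\mathrm{reg}}(U,\cdot)+\beta_{\lambda,2}\|\Lambda\|^2$ is concave is valid; the paper does not include it, and it makes the $\tfrac{\mu}{2}\|\cdot\|^2$ convention behind the constant $2\beta_{\lambda,2}$ explicit.
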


\begin{proof}
For fixed $U$, the trajectory $X=\mathcal R(U)$ and the coefficients
$b_k(U)$ are fixed. By Eq.~\eqref{eq:dual-compact-form}, the
$\Lambda$-dependent part of $L_{\mathrm{reg}}$ consists of the linear term
\[
\sum_{k=0}^{M}\lambda_k^Tb_k(U),
\]
together with the quadratic terms
\[
-\beta_{\lambda,s}\|\mathcal D\Lambda\|^2
\qquad\text{and}\qquad
-\beta_{\lambda,2}\|\Lambda\|^2.
\]
The linear term has zero Hessian. Since
\[
\|\mathcal D\Lambda\|^2
=
\Lambda^T\mathcal D^T\mathcal D\Lambda,
\]
the two quadratic terms have Hessians
$-2\beta_{\lambda,s}\mathcal D^T\mathcal D$ and
$-2\beta_{\lambda,2}I_{(M+1)n}$, respectively. Therefore,
\begin{equation}
\nabla_{\Lambda\Lambda}^2 L_{\mathrm{reg}}
=
-2\beta_{\lambda,s}\mathcal D^T\mathcal D
-2\beta_{\lambda,2}I_{(M+1)n}.
\end{equation}

For any $z\in\mathbb R^{(M+1)n}$,
\[
z^T\mathcal D^T\mathcal D z
=
\|\mathcal D z\|^2
\geq 0,
\]
so $\mathcal D^T\mathcal D$ is positive semidefinite. Since
$\beta_{\lambda,s}\geq0$, it follows that
\begin{equation}
\nabla_{\Lambda\Lambda}^2 L_{\mathrm{reg}}
\preceq
-2\beta_{\lambda,2}I_{(M+1)n}.
\end{equation}
Hence, when $\beta_{\lambda,2}>0$,
$L_{\mathrm{reg}}(U,\cdot)$ is
$2\beta_{\lambda,2}$-strongly concave \cite{BoydVandenberghe2004}.
\end{proof}

\subsection{Existence and uniqueness of the dual maximizer}
\begin{proposition}[Existence and uniqueness of the dual maximizer]
\label{prop:dual-existence-uniqueness}
Let $U$ be any fixed admissible policy and suppose that
$\beta_{\lambda,2}>0$. Then the dual subproblem
\begin{equation}
\max_{\Lambda\in\mathcal B_\lambda}
L_{\mathrm{reg}}(U,\Lambda)
\end{equation}
admits a unique global maximizer, denoted by
\begin{equation}
\Lambda^\star(U)
=
\argmax_{\Lambda\in\mathcal B_\lambda}
L_{\mathrm{reg}}(U,\Lambda).
\end{equation}
\end{proposition}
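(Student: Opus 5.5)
The plan is to combine compactness of the feasible set with the strong concavity in Lemma~\ref{lem:dual-strong-concavity}. The first step fixes $U$. Then the trajectory $X=\mathcal R(U)$ is determined by the deterministic rollout, and the coefficient vectors $b_k(U)$ in Eq.~\eqref{eq:discrete-dual-coefficients-early} are fixed finite vectors in $\mathbb R^n$. This is because they involve only finitely many evaluations of $f$ and the states $x_k$. By Eq.~\eqref{eq:dual-compact-form}, $L_{\mathrm{reg}}(U,\Lambda)$ equals a constant, namely $J_h(U)$ plus the policy regularization term, plus the function
\[
g(\Lambda)=\sum_{k=0}^{M}\lambda_k^Tb_k(U)-\beta_{\lambda,s}\|\mathcal D\Lambda\|^2-\beta_{\lambda,2}\|\Lambda\|^2 .
\]
It therefore suffices to study the maximization of the quadratic polynomial $g$ over $\mathcal B_\lambda$.

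For existence, I will note that $g$ is a polynomial in the entries of $\Lambda$ and is therefore continuous on $\mathbb R^{(M+1)n}$. The set $\mathcal B_\lambda$ in Eq.~\eqref{eq:lambda-box} is a product of closed bounded intervals $[-\lambda_{\max},\lambda_{\max}]$. It is therefore nonempty (it contains $\Lambda=0$), closed, bounded, and convex. The Weierstrass extreme value theorem then gives a maximizer $\Lambda^\star\in\mathcal B_\lambda$. The one subtlety is that $b_k(U)$ must be finite, which requires the rollout to be well defined for admissible $U$. I will take this as implicit in the standing setting, since $F_k$ and $f$ are assumed evaluable.

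For uniqueness, I will invoke Lemma~\ref{lem:dual-strong-concavity}. Under $\beta_{\lambda,2}>0$, the lemma says $g$ is $2\beta_{\lambda,2}$-strongly concave, and strong concavity implies strict concavity. Suppose $\Lambda^1\neq\Lambda^2$ are both maximizers, with common value $g^\star$. By convexity of the box, their midpoint $\bar\Lambda$ lies in $\mathcal B_\lambda$. Strong concavity gives
\[
g(\bar\Lambda)\ge \tfrac12 g(\Lambda^1)+\tfrac12 g(\Lambda^2)+\tfrac{\beta_{\lambda,2}}{4}\|\Lambda^1-\Lambda^2\|^2>g^\star ,
\]
which contradicts maximality. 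Hence the maximizer is unique, and $\Lambda^\star(U)$ is well defined.

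No step is a real obstacle. The point needing the most care is correctly deducing the midpoint inequality from the Hessian bound in the lemma. I would do this by writing $g(\Lambda)+\beta_{\lambda,2}\|\Lambda\|^2$ as a concave quadratic. Its Hessian is $-2\beta_{\lambda,s}\mathcal D^T\mathcal D\preceq0$, so it is concave. The midpoint identity for $\|\cdot\|^2$ then produces the extra term $\tfrac{\beta_{\lambda,2}}{4}\|\Lambda^1-\Lambda^2\|^2$. Alternatively, one can simply cite the standard fact that a strictly concave function on a convex set has at most one maximizer.
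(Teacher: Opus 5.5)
Your proposal is correct and follows essentially the same route as the paper: existence from continuity of $L_{\mathrm{reg}}(U,\cdot)$ on the nonempty compact box $\mathcal B_\lambda$ via the extreme value theorem, and uniqueness from the strong concavity in Lemma~\ref{lem:dual-strong-concavity} together with convexity of $\mathcal B_\lambda$. Your explicit midpoint inequality with the constant $\tfrac{\beta_{\lambda,2}}{4}\|\Lambda^1-\Lambda^2\|^2$ is correct and simply fills in a step that the paper leaves implicit.
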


\begin{proof}
For fixed $U$, $L_{\mathrm{reg}}(U,\cdot)$ is continuous on the nonempty
compact set $\mathcal B_\lambda$. By the Extreme Value Theorem, it therefore
attains a maximum on $\mathcal B_\lambda$ \cite{Rudin1976}.
By Lemma~\ref{lem:dual-strong-concavity}, it is strongly concave when
$\beta_{\lambda,2}>0$. Since $\mathcal B_\lambda$ is convex, the maximizer
is unique.
\end{proof}

\subsection{Consistency of the finite action search}
\begin{corollary}[Consistency of the finite action search]
\label{cor:finite-action-search}
Fix $U$, $\Lambda$, and a policy coordinate $k$. Suppose that there exists
$L_Q>0$ such that
\begin{equation}
\left|
\mathcal Q_{k,h}^{\mathrm{HP}}(v\mid U,\Lambda)
-
\mathcal Q_{k,h}^{\mathrm{HP}}(w\mid U,\Lambda)
\right|
\leq
L_Q \|v-w\|_2
\end{equation}
for all $v,w\in\mathcal U$. Then
\begin{equation}
0
\leq
\min_{v\in\mathcal V_\delta}
\mathcal Q_{k,h}^{\mathrm{HP}}(v\mid U,\Lambda)
-
\min_{v\in\mathcal U}
\mathcal Q_{k,h}^{\mathrm{HP}}(v\mid U,\Lambda)
\leq
L_Q\delta .
\end{equation}
Consequently, the minimum score over $\mathcal V_\delta$ converges to
the continuous coordinate minimum as $\delta\to0$.
\end{corollary}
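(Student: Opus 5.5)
We write $q(v)=\mathcal Q_{k,h}^{\mathrm{HP}}(v\mid U,\Lambda)$ and argue in three steps.

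\emph{Existence of the continuous minimum.} The statement writes $\min_{v\in\mathcal U} q(v)$, so we first ensure it is attained. By hypothesis $q$ is $L_Q$-Lipschitz, hence continuous, on $\mathcal U$. In every setting of the paper $\mathcal U$ is a bounded closed box, and then the Extreme Value Theorem gives a minimizer $v^\star\in\mathcal U$. The same theorem justified Proposition~\ref{prop:dual-existence-uniqueness}. If $\mathcal U$ were not compact, we would replace $\min$ by $\inf$ and pick a near-minimizer $v_\varepsilon$ with $q(v_\varepsilon)\le\inf q+\varepsilon$. The argument below then yields the bound $L_Q\delta+\varepsilon$, and we let $\varepsilon\to0$. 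The finite-set minimum over $\mathcal V_\delta$ always exists, since $\mathcal V_\delta$ has finitely many elements.

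\emph{Lower bound.} Because $\mathcal V_\delta\subseteq\mathcal U$, minimizing over a subset cannot produce a smaller value. Hence $\min_{\mathcal V_\delta}q\ge\min_{\mathcal U}q$, which is the left inequality $0\le\cdots$.

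\emph{Upper bound.} This is the only step with content. By the definition of the covering radius $\delta$, there is a candidate $w\in\mathcal V_\delta$ with $\|v^\star-w\|_2\le\delta$. If the supremum–minimum is not attained exactly, we take $\|v^\star-w\|_2\le\delta+\eta$ and then let $\eta\to0$; since $\mathcal V_\delta$ is finite, the inner minimum is attained in any case. The Lipschitz hypothesis then gives
\[
\min_{\mathcal V_\delta}q\le q(w)\le q(v^\star)+L_Q\|w-v^\star\|_2\le\min_{\mathcal U}q+L_Q\delta.
\]
Subtracting $\min_{\mathcal U} q$ yields the right inequality.

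\emph{Consequence.} The gap is squeezed between $0$ and $L_Q\delta$, so it tends to zero as $\delta\to0$. Here $L_Q$ is fixed because $U$, $\Lambda$ and $k$ are fixed. I expect no real obstacle. The only point needing care is the attainment of the continuous minimum and of the covering-radius witness, which the compactness of $\mathcal U$ (or the $\varepsilon$/$\eta$ arguments above) handles.
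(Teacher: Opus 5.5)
Your proof is correct and matches the paper's argument step for step: a minimizer on the compact $\mathcal U$ via the Extreme Value Theorem, the subset inequality for the lower bound, and a covering-radius witness combined with the Lipschitz bound for the upper bound. Your $\eta$-hedge is unnecessary, since the inner minimum over the finite $\mathcal V_\delta$ is attained, as you note yourself; the $\varepsilon$ fallback for non-compact $\mathcal U$ is a harmless extension the paper does not need.
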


\begin{proof}
Since $\mathcal U$ is compact and the Lipschitz assumption implies
continuity, the Extreme Value Theorem guarantees that
$\mathcal Q_{k,h}^{\mathrm{HP}}(\cdot\mid U,\Lambda)$ attains its minimum
on $\mathcal U$ \cite{Rudin1976}. Let $v^\star\in\mathcal U$ denote such
a minimizer. By the definition of the covering radius $\delta$, there
exists $w_\delta\in\mathcal V_\delta$ such that
\[
\|w_\delta-v^\star\|_2\leq\delta.
\]
Since $\mathcal V_\delta\subseteq\mathcal U$,
\[
\begin{aligned}
0
&\leq
\min_{v\in\mathcal V_\delta}
\mathcal Q_{k,h}^{\mathrm{HP}}(v\mid U,\Lambda)
-
\mathcal Q_{k,h}^{\mathrm{HP}}(v^\star\mid U,\Lambda)\\
&\leq
\mathcal Q_{k,h}^{\mathrm{HP}}(w_\delta\mid U,\Lambda)
-
\mathcal Q_{k,h}^{\mathrm{HP}}(v^\star\mid U,\Lambda)\\
&\leq
L_Q\|w_\delta-v^\star\|_2
\leq
L_Q\delta .
\end{aligned}
\]
Because
\[
\mathcal Q_{k,h}^{\mathrm{HP}}(v^\star\mid U,\Lambda)
=
\min_{v\in\mathcal U}
\mathcal Q_{k,h}^{\mathrm{HP}}(v\mid U,\Lambda),
\]
the stated bound follows. Letting $\delta\to0$ gives convergence of the
minimum score over $\mathcal V_\delta$ to the continuous coordinate
minimum.
\end{proof}

\subsection{Multiplier block update}
\paragraph{Backward Hamiltonian dual sweep}

After the policy sweep, the updated policy $U^{(r+1)}$ and its rollout
$X^{(r+1)}=\mathcal R(U^{(r+1)})$ are fixed. The multiplier blocks are
then maximized in the default backward order
\begin{equation}
\label{eq:backward-dual-order}
\lambda_M
\longrightarrow
\lambda_{M-1}
\longrightarrow
\cdots
\longrightarrow
\lambda_0.
\end{equation}
Assuming $\beta_{\lambda,2}>0$, fix $U$, and hence
$X=\mathcal R(U)$ and $b_k(U)$, and consider an interior multiplier block
$1\leq k\leq M-1$. The terms of
$L_{\mathrm{reg}}$ involving $\lambda_k$ are
\begin{equation}
\begin{aligned}
b_k(U)^T\lambda_k
&-\beta_{\lambda,s}\|\lambda_k-\lambda_{k-1}\|^2\\
&-\beta_{\lambda,s}\|\lambda_{k+1}-\lambda_k\|^2
-\beta_{\lambda,2}\|\lambda_k\|^2 .
\end{aligned}
\end{equation}
Differentiating with respect to $\lambda_k$ gives
\begin{equation}
b_k(U)
+2\beta_{\lambda,s}(\lambda_{k-1}+\lambda_{k+1})
-(4\beta_{\lambda,s}+2\beta_{\lambda,2})\lambda_k.
\end{equation}
Setting this expression equal to zero gives the unconstrained maximizer of
the interior block subproblem. The endpoint expressions follow analogously
by retaining only the single neighboring smoothness term. Consequently,
\begin{equation}
\label{eq:lambda-closed-form}
\widehat{\lambda}_k
=
\begin{cases}
\displaystyle
\frac{
b_0(U)+2\beta_{\lambda,s}\lambda_1
}{
2\beta_{\lambda,s}+2\beta_{\lambda,2}
},
& k=0,
\\[4mm]
\displaystyle
\frac{
b_k(U)+2\beta_{\lambda,s}
(\lambda_{k-1}+\lambda_{k+1})
}{
4\beta_{\lambda,s}+2\beta_{\lambda,2}
},
& 1\leq k\leq M-1,
\\[4mm]
\displaystyle
\frac{
b_M(U)+2\beta_{\lambda,s}\lambda_{M-1}
}{
2\beta_{\lambda,s}+2\beta_{\lambda,2}
},
& k=M.
\end{cases}
\end{equation}

Since the block objective is a concave quadratic centered at
$\widehat{\lambda}_k$, maximizing it over
$[-\lambda_{\max},\lambda_{\max}]^n$ is equivalent to projecting
$\widehat{\lambda}_k$ onto that box. Hence, the exact constrained
block maximizer is
\begin{equation}
\label{eq:lambda-coordinate-update}
\lambda_k^{+}
=
\Pi_{[-\lambda_{\max},\lambda_{\max}]^n}
\left(\widehat{\lambda}_k\right).
\end{equation}
Thus, each multiplier block is updated by its exact constrained maximizer.

\subsection{Convergence of the dual block-coordinate sweep}
\begin{proposition}[Convergence of the dual block-coordinate sweep]
\label{prop:dual-coordinate-convergence}
Fix an admissible policy $U$ and suppose that
$\beta_{\lambda,2}>0$. If the multiplier blocks are repeatedly updated
in a fixed cyclic order using the exact constrained block maximizer in
Eq.~\eqref{eq:lambda-coordinate-update}, then the resulting multiplier
sequence converges to the unique solution $\Lambda^\star(U)$ of the
fixed-$U$ dual maximization problem.
\end{proposition}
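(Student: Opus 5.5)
Fix $U$ and write $g(\Lambda)=L_{\mathrm{reg}}(U,\Lambda)$. Up to a constant, $g(\Lambda)=\Lambda^T b-\Lambda^T P\Lambda$, where $b=(b_0(U),\ldots,b_M(U))$ and $P=\beta_{\lambda,s}\mathcal D^T\mathcal D+\beta_{\lambda,2}I$. Lemma~\ref{lem:dual-strong-concavity} gives $P\succeq\beta_{\lambda,2}I\succ0$. The feasible set $\mathcal B_\lambda=\prod_{k}[-\lambda_{\max},\lambda_{\max}]^n$ is a Cartesian product of compact convex blocks. The update in Eq.~\eqref{eq:lambda-coordinate-update} is the exact maximizer of $g$ over one block with the others fixed, and this maximizer is unique because the block Hessian is $-2P_{kk}\preceq-2\beta_{\lambda,2}I_n$.

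The plan is to reduce the result to the classical convergence theorem for block coordinate descent (Gauss--Seidel) on a continuously differentiable objective over a product of closed convex sets, applied to $-g$. I would invoke the version in Bertsekas (Nonlinear Programming, Prop.~2.7.1) or Tseng~\cite{Tseng2001}, whose hypothesis is that each block minimization has a unique solution. That theorem says every limit point of the cyclic iterates is a stationary point of $\max_{\mathcal B_\lambda}g$. Since $g$ is concave and $\mathcal B_\lambda$ is convex, stationarity (the variational inequality $\nabla g(\bar\Lambda)^T(\Lambda-\bar\Lambda)\le0$ for all feasible $\Lambda$) is equivalent to global optimality. By Proposition~\ref{prop:dual-existence-uniqueness} the global maximizer is the unique point $\Lambda^\star(U)$.

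To turn this into convergence of the whole sequence, I would argue as follows. The iterates stay in the compact set $\mathcal B_\lambda$, so the sequence is bounded. Every limit point equals $\Lambda^\star(U)$, so the full sequence converges to $\Lambda^\star(U)$.

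I would also make the argument self-contained, which avoids depending on the exact hypotheses of a cited theorem. Strong concavity on the block gives a sufficient-increase bound at each block step:
\[
g(\Lambda^{\mathrm{new}})-g(\Lambda^{\mathrm{old}})\ge\beta_{\lambda,2}\|\lambda_k^{\mathrm{new}}-\lambda_k^{\mathrm{old}}\|^2 .
\]
This follows because the maximizer $z^+$ of a $2\beta_{\lambda,2}$-strongly concave function $h$ over a convex set satisfies $h(z^+)-h(z)\ge\beta_{\lambda,2}\|z-z^+\|^2$. The values $g(\Lambda^{(r)})$ are nondecreasing and bounded above on the compact box, so they converge. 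Summing the increments then shows that successive block changes tend to zero, so the intermediate iterates within a sweep share the same limit points as the sweep iterates.

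Next, take a limit point $\bar\Lambda$ along a subsequence. Each block satisfies the optimality condition $\nabla_{\lambda_k}g(\cdot)^T(z-\lambda_k)\le0$ for all $z$ in the box, evaluated at the intermediate iterate just after block $k$ is updated. The gradient is continuous and the intermediate iterates converge to $\bar\Lambda$, so passing to the limit gives the blockwise variational inequalities at $\bar\Lambda$. Because the constraint set is a product, these combine into the full variational inequality. Concavity then makes $\bar\Lambda$ the maximizer, and uniqueness finishes the proof.

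The main obstacle is the step where vanishing successive differences are used to pass to the limit in the blockwise optimality conditions. This is where the block-uniqueness, or sufficient-increase, property is essential, and where the specific sweep order (backward, or any fixed cyclic order) has to be handled uniformly. The sufficient-increase inequality above is the first tool I would use for this step. If needed, I would alternatively note that $g$ is quadratic and the constraints are a box, so the projected Gauss--Seidel map is nonexpansive in the $P$-norm, which gives even linear convergence.
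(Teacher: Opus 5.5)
Your proposal is correct and follows essentially the paper's route. You treat the sweep as cyclic block Gauss--Seidel on $-L_{\mathrm{reg}}(U,\cdot)$ over the product box and use a classical block-coordinate result to show every limit point is the global maximizer. The paper cites Grippo--Sciandrone, Prop.~6, via pseudoconvexity and compact level sets. You cite Bertsekas/Tseng via uniqueness of block maximizers plus concavity, or give a valid self-contained sufficient-increase argument. You then finish with uniqueness and boundedness in $\mathcal B_\lambda$, a last step you state explicitly where the paper leaves it implicit.

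Drop the closing aside. Nonexpansiveness in the $P$-norm would not by itself give a linear rate, and with active box constraints it is not automatic: a single projected block step can increase the $P$-distance between two iterates.
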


\begin{proof}
For fixed $U$, define
\[
F_U(\Lambda)=-L_{\mathrm{reg}}(U,\Lambda).
\]
By Lemma~\ref{lem:dual-strong-concavity}, $F_U$ is continuously
differentiable and strongly convex on $\mathcal B_\lambda$.
Moreover,
\[
\mathcal B_\lambda
=
\prod_{k=0}^{M}
[-\lambda_{\max},\lambda_{\max}]^n
\]
is a compact Cartesian product of convex sets.

The update in Eq.~\eqref{eq:lambda-coordinate-update} is the exact
minimization of $F_U$ with respect to one multiplier block while all
remaining blocks are held fixed. Hence the multiplier sweep is a cyclic
block nonlinear Gauss--Seidel method applied to
\[
\min_{\Lambda\in\mathcal B_\lambda}F_U(\Lambda).
\]
Since $F_U$ is strongly convex, it is pseudoconvex, and its level sets
relative to $\mathcal B_\lambda$ are compact. Therefore, the convergence
result of Grippo and Sciandrone \cite[Proposition~6]{GrippoSciandrone2000}
implies that every limit point of the multiplier sequence is a global
minimizer of $F_U$.

Strong convexity implies that this minimizer is unique. By
Proposition~\ref{prop:dual-existence-uniqueness}, it is
$\Lambda^\star(U)$. Hence the multiplier sequence converges to
$\Lambda^\star(U)$.
\end{proof}

\section{Detailed Analytical Solution for the Constructed Nonsmooth Benchmark, Example 1}
We first consider a scalar optimal-control problem whose optimal trajectory
crosses the nondifferentiable point \(x=0\). Since a closed-form solution is
available, this example allows direct comparison of the computed control,
state, crossing time, and objective value with their exact counterparts.

The problem is
\begin{equation}
\label{eq:constructed-problem}
\begin{aligned}
\min_{u(\cdot)}\quad
J(u)
&=
\frac{\gamma}{2}\bigl(x(2)-x_d\bigr)^2
+\frac{1}{2}\int_{0}^{2}u(t)^2\,dt,\\
\text{subject to}\quad
\dot{x}(t)
&=
|x(t)|+u(t),\\
x(0)&=-a,\\
0&\leq u(t)\leq u_{\max},
\end{aligned}
\end{equation}
where \(a>0\), \(\gamma>0\), and
\begin{equation}
\label{eq:constructed-xd}
x_d
=
a+\frac{2a}{\gamma(e^2-1)}.
\end{equation}
The target \(x_d\) is selected so that the optimal trajectory crosses
\(x=0\), where the dynamics are continuous but nondifferentiable. For the numerical experiment, we use
\begin{equation}
\label{eq:constructed-parameters}
a=0.2,
\qquad
\gamma=1,
\qquad
u_{\max}=0.2.
\end{equation}

Let \(\tau\in(0,2)\) denote the zero-crossing time and let
\(y=x(2)\) denote the terminal state. For a fixed pair \((\tau,y)\), the
minimum control effort can be determined separately before and after the
crossing.

\paragraph*{Minimum effort before the crossing.}
\hspace{3mm} For \(0\leq t\leq\tau\), the state is negative, and the dynamics reduce to
\begin{equation}
\dot{x}=-x+u,
\qquad
x(0)=-a,
\qquad
x(\tau)=0.
\end{equation}
Multiplication by the integrating factor \(e^t\) gives
\begin{equation}
\frac{d}{dt}\bigl(e^t x(t)\bigr)=e^t u(t).
\end{equation}
Integrating over \([0,\tau]\) and using the endpoint conditions yields
\begin{equation}
\label{eq:pre-crossing-constraint}
\int_0^\tau e^t u(t)\,dt=a.
\end{equation}
The Cauchy--Schwarz inequality then gives
\begin{equation}
a^2
=
\left(\int_0^\tau e^t u(t)\,dt\right)^2
\leq
\left(\int_0^\tau e^{2t}\,dt\right)
\left(\int_0^\tau u(t)^2\,dt\right).
\end{equation}
Since
\begin{equation}
\int_0^\tau e^{2t}\,dt
=
\frac{e^{2\tau}-1}{2},
\end{equation}
we obtain
\begin{equation}
\frac{1}{2}\int_0^\tau u(t)^2\,dt
\geq
\frac{a^2}{e^{2\tau}-1}.
\end{equation}

Equality holds for \(u(t)\propto e^t\), yielding the minimum pre-crossing effort
\begin{equation}
E_-(\tau)
:=
\min_{u}
\left\{
\frac{1}{2}\int_0^\tau u(t)^2\,dt
:
x(0)=-a,\;
x(\tau)=0
\right\}
=
\frac{a^2}{e^{2\tau}-1},
\end{equation}
and
\begin{equation}
\label{eq:u-minus-general}
u_-(t)
=
\frac{2ae^t}{e^{2\tau}-1},
\qquad
0\leq t\leq\tau.
\end{equation}

\paragraph*{Minimum effort after the crossing.}
\hspace{3mm}For \(\tau\leq t\leq2\), the state is positive, so
\begin{equation}
\dot{x}=x+u,
\qquad
x(\tau)=0,
\qquad
x(2)=y.
\end{equation}
Multiplication by the integrating factor \(e^{-t}\) and integration over
\([\tau,2]\) give
\begin{equation}
\label{eq:post-crossing-constraint}
y
=
\int_\tau^2 e^{2-t}u(t)\,dt.
\end{equation}
Applying the same Cauchy--Schwarz argument used above yields
\begin{equation}
\label{eq:E-plus}
E_+(\tau,y)
=
\frac{y^2}{e^{2(2-\tau)}-1},
\end{equation}
with minimizing control
\begin{equation}
\label{eq:u-plus-general}
u_+(t)
=
\frac{2y\,e^{2-t}}{e^{2(2-\tau)}-1},
\qquad
\tau\leq t\leq2.
\end{equation}

Combining the pre- and post-crossing contributions reduces the original
optimal-control problem to
\begin{equation}
\label{eq:reduced-J}
\overline{J}(\tau,y)
=
\frac{a^2}{e^{2\tau}-1}
+
\frac{y^2}{e^{2(2-\tau)}-1}
+
\frac{\gamma}{2}(y-x_d)^2.
\end{equation}
For a fixed crossing time \(\tau\), minimization with respect to \(y\) gives
\begin{equation}
\label{eq:y-of-tau}
y(\tau)
=
\frac{
\gamma x_d\bigl(e^{2(2-\tau)}-1\bigr)
}{
\gamma\bigl(e^{2(2-\tau)}-1\bigr)+2
}.
\end{equation}

It remains to determine the optimal crossing time. Define
\begin{equation}
r=e^{2\tau},
\qquad
s=e^2.
\end{equation}
Substituting \eqref{eq:y-of-tau} into \eqref{eq:reduced-J} and using
\eqref{eq:constructed-xd} gives
\begin{equation}
\label{eq:global-difference}
\overline{J}\bigl(\tau,y(\tau)\bigr)
-
\overline{J}(1,a)
=
\frac{
a^2(r-s)^2\bigl(\gamma s^2-\gamma+2\bigr)
}{
(r-1)(s-1)^2
\left[\gamma(s^2-r)+2r\right]
}.
\end{equation}
For \(0<\tau<2\), we have \(1<r<s^2\), and every factor in the denominator
of \eqref{eq:global-difference} is positive. The right-hand side is therefore
nonnegative and vanishes only when \(r=s\). Consequently,
\begin{equation}
\label{eq:tau-y-star}
\tau^*=1,
\qquad
y^*=a.
\end{equation}

Substituting \eqref{eq:tau-y-star} into
\eqref{eq:u-minus-general} and \eqref{eq:u-plus-general} gives the exact
optimal control
\begin{equation}
\label{eq:exact-control}
u^*(t)
=
\begin{cases}
\dfrac{2ae^t}{e^2-1},
&0\leq t\leq1,\\[8pt]
\dfrac{2ae^{2-t}}{e^2-1},
&1<t\leq2.
\end{cases}
\end{equation}
Solving the corresponding linear state equations on the two sides of the
crossing and imposing continuity at \(t=1\) gives
\begin{equation}
\label{eq:exact-state}
x^*(t)
=
\frac{a\left(e^t-e^{2-t}\right)}{e^2-1},
\qquad
0\leq t\leq2.
\end{equation}
The exact trajectory satisfies
\begin{equation}
x^*(0)=-a,
\qquad
x^*(1)=0,
\qquad
x^*(2)=a.
\end{equation}

The optimal control attains its maximum at the crossing time:
\begin{equation}
u^*(1)
=
\frac{2ae}{e^2-1}.
\end{equation}
For \(a=0.2\),
\begin{equation}
u^*(1)
\approx0.1701836
<
u_{\max}=0.2.
\end{equation}
Thus, the control bound is inactive at the optimum, confirming that the
unconstrained minimum-effort construction above is admissible.

\section{Reinforcement-Learning Baseline Implementation Details}

The PPO and SAC comparisons were implemented with Gymnasium and
Stable-Baselines3. For each scalar benchmark, the reinforcement-learning
environment uses the same finite horizon, time step, RK4 transition map,
physical control bounds, and discrete objective used for the corresponding
HPDPO calculation. The observation contains the physical state together with
normalized time, and the trained policy is evaluated deterministically from
the prescribed initial condition. PPO and SAC were each trained for
\(250{,}000\) environment transitions for every reported random seed.

\subsection{Example 1: constructed nonsmooth benchmark}

For the constructed benchmark, the observation is
\((x_k,k/M)\), and the action space is the physical interval
\([0,0.2]\); no additional action rescaling is applied by the environment.
The training reward is the negative of the original one-step cost multiplied
by \(100\). This reward scaling affects optimization only; all reported
objectives and errors are evaluated in the original physical units.

PPO uses learning rate \(3\times10^{-4}\), rollout length \(2000\),
batch size \(100\), \(10\) optimization epochs per rollout,
discount factor \(\gamma=1\), GAE parameter \(0.95\), clipping range \(0.2\),
entropy coefficient \(0\), value-function coefficient \(0.5\), and maximum
gradient norm \(0.5\). The policy and value networks each contain two hidden
layers of \(64\) units.

SAC uses learning rate \(3\times10^{-4}\), replay-buffer capacity
\(250{,}000\), \(2000\) initial transitions before learning, batch size \(256\),
\(\gamma=1\), soft target-update coefficient \(\tau=0.005\), one gradient
step per environment step, and automatic entropy tuning. The policy network
contains two hidden layers of \(64\) units. All PPO and SAC runs are performed
on CPU with one PyTorch thread.

Five independent seeds, \(0,1,2,3,4\), are used for each algorithm.
Since one episode contains \(M=100\) control decisions, the
\(250{,}000\)-transition budget corresponds to \(2500\)
full-horizon-equivalent episodes per run. Table~\ref{tab:si-example1-rl}
summarizes the mean and sample standard deviation across the five seeds.

\begin{table}[ht]
\centering
\small
\caption{Five-seed PPO and SAC summary for the constructed nonsmooth
benchmark. HPDPO values are included for reference.}
\label{tab:si-example1-rl}
\begin{tabular}{lccc}
\hline
Quantity & HPDPO & PPO & SAC\\
\hline
Discrete objective \(J_h\)
& \(1.45\times10^{-2}\)
& \((1.4933\pm0.0147)\times10^{-2}\)
& \((1.5586\pm0.0428)\times10^{-2}\)\\
Control RMSE
& \(4.99\times10^{-4}\)
& \((1.732\pm0.201)\times10^{-2}\)
& \((3.409\pm0.723)\times10^{-2}\)\\
State RMSE
& \(2.12\times10^{-4}\)
& \((1.473\pm0.308)\times10^{-2}\)
& \((1.421\pm0.494)\times10^{-2}\)\\
Objective gap
& \(7.26\times10^{-8}\)
& \((4.52\pm1.47)\times10^{-4}\)
& \((1.10\pm0.43)\times10^{-3}\)\\
Crossing-time error
& \(1.83\times10^{-3}\)
& \((7.69\pm1.71)\times10^{-2}\)
& \((8.95\pm4.58)\times10^{-2}\)\\
Terminal-state error
& \(2.36\times10^{-4}\)
& \((2.34\pm0.61)\times10^{-2}\)
& \((1.27\pm0.95)\times10^{-2}\)\\
\hline
\end{tabular}
\end{table}

\subsection{Example 2: published nonsmooth benchmark}

For the published nonsmooth benchmark, the observation is again the physical
state together with normalized time, and the physical action lies directly in
\([-1,1]\). The reinforcement-learning training objective includes a soft
squared penalty on violations of the state interval \([0,1]\), with penalty
coefficient \(10\). This penalty is used only during training. The reported
objective and accuracy metrics are always recomputed from the original
unpenalized optimal-control problem, and state feasibility is checked
separately. The training reward is multiplied by a fixed factor of \(10\).

PPO uses learning rate \(3\times10^{-4}\), rollout length \(4000\), batch size
\(200\), \(10\) optimization epochs, \(\gamma=1\), GAE parameter \(0.95\),
clipping range \(0.2\), entropy coefficient \(0\), value-function coefficient
\(0.5\), and maximum gradient norm \(0.5\). The policy and value networks each
contain two hidden layers of \(64\) units.

SAC uses learning rate \(3\times10^{-4}\), replay-buffer capacity
\(250{,}000\), \(2000\) initial transitions before learning, batch size \(256\),
\(\gamma=1\), \(\tau=0.005\), one gradient step per environment step,
automatic entropy tuning, and two hidden layers of \(64\) units. Five
independent seeds, \(0,1,2,3,4\), are reported for each method. Since one
episode contains \(M=200\) control decisions, the training budget corresponds
to \(1250\) full-horizon-equivalent episodes per run.

Table~\ref{tab:si-example2-rl} reports the mean and sample standard deviation
over all five seeds. PPO produced feasible trajectories for two of the five
runs, whereas all five SAC runs satisfied the state constraint. Feasibility is
therefore reported separately rather than filtering the stochastic results
before averaging.

\begin{table}[ht]
\centering
\small
\caption{Five-seed PPO and SAC summary for the published nonsmooth benchmark.
HPDPO values are included for reference.}
\label{tab:si-example2-rl}
\begin{tabular}{lccc}
\hline
Quantity & HPDPO & PPO & SAC\\
\hline
Discrete objective \(J_h\)
& \(-0.661675\)
& \(-0.2434\pm0.1775\)
& \(-0.4629\pm0.1342\)\\
Control RMSE
& \(7.30\times10^{-17}\)
& \(0.4480\pm0.1027\)
& \(0.2945\pm0.1256\)\\
State RMSE
& \(4.56\times10^{-3}\)
& \(0.4343\pm0.0978\)
& \(0.2463\pm0.0622\)\\
Objective gap
& \(<10^{-14}\)
& \(0.4183\pm0.1775\)
& \(0.1987\pm0.1342\)\\
Feasible runs
& \(1/1\)
& \(2/5\)
& \(5/5\)\\
\hline
\end{tabular}
\end{table}

\section{Example 3: Learned ReLU Approximation of the Four-State Cascade}

The network used to approximate the untreated four-state vector field is
trained directly on random states from
$[0,1]\times[0,1.5]\times[0,1.5]\times[0,1.5]$. With the default settings,
$100{,}000$ state samples are generated and split after shuffling into
$80\%$ training, $10\%$ validation, and $10\%$ test subsets. Inputs and
right-hand-side targets are standardized using statistics computed from the
training subset only.

The network architecture is
$4\rightarrow256\rightarrow256\rightarrow4$ with ReLU activation functions.
The untreated vector field $f_{\mathrm{true}}(x,0)$ is the supervised target;
the treatment term $(-uA,0,0,0)^T$ is not learned and is added explicitly in
the controlled model. Training uses AdamW, learning rate $10^{-3}$, weight
decay $10^{-6}$, batch size $2048$, a maximum of $4000$ epochs, gradient-norm
clipping at $1$, and a cosine-annealing learning-rate schedule. The loss is a
component-weighted mean squared error with default weights $(1,1,1,1)$.
The checkpoint with the smallest validation loss is retained. The default
random seed is $123$, and computations use double precision.

The default training set contains only random feasible-box samples. Optional
untreated or treated trajectory states can be added by command-line options,
but their targets remain the untreated vector field. The trajectory plots in
the training script are post-training validation calculations and are not
included in the default fitting set.

\section{Example 4: ADNI Amyloid Neural ODE Training Details}

The revised amyloid training implementation reads normalized amyloid values
from the data processing worksheet. Subjects with at
least two usable amyloid visits are retained. The split is performed at the
subject level with fractions $0.70/0.15/0.15$ for training, validation, and
test data using random seed $42$. For the 315 eligible subjects reported in
the manuscript, this rule gives 220 training subjects, 47 validation subjects,
and 48 test subjects. The test subset is not evaluated during fitting; it is
evaluated only after the validation-selected checkpoint has been restored.

The learned right-hand side is
\[
 g_{\phi}(A)=\left[\operatorname{Softplus}_{\beta=2}
 (h_{\phi}(A))\right]A(1-A),
\]
The network $h_{\phi}$ has two hidden ReLU layers with
$64$ units per layer. The population anchor $A(0)$ is optimized jointly with
the network. Its initialization is the median of training observations closest
to $s=0$, using at least 10 observations and otherwise approximately the
closest $5\%$ of training observations.

Training uses Adam with learning rate $10^{-3}$ and weight decay $10^{-6}$.
The maximum number of epochs is $1200$, gradient norms are clipped at $5$, and
early stagnant validation performance triggers early stopping after $150$
epochs without an improvement larger than $10^{-8}$. A
ReduceLROnPlateau scheduler halves the learning rate after sustained validation
stagnation and does not reduce it below $10^{-6}$. Neural ODE trajectories are
integrated by a differentiable RK4 scheme with maximum step size $0.02$. The computation uses double precision.

The fitted objective in the revised implementation is the subject-balanced
MSE used in the main paper plus a weak anchor penalty and a weak squared-rate
penalty. The anchor penalty weight is $10^{-4}$. The rate penalty weight is
$10^{-6}$ and is evaluated on 128 uniformly spaced amyloid states over
$[0,1]$. Model selection is based only on validation subject-balanced MSE.
The script records split assignments, fit metrics, training history,
predictions, and the selected checkpoint to support reproducibility.

\section{Example 4: Data-Driven Amyloid Control and Smooth Reference Settings}

The data-driven control comparison uses the learned checkpoint directly over
the DPS interval $[0,10]$ with step size $0.1$. The controlled model is
$dA/ds=g_{\phi}(A)-\kappa uA$ with $\kappa=1$, treatment penalty $\rho=0.1$,
and $0\le u\le1$. The initial control is $0.1$. The HPDPO implementation uses
12 outer iterations, one control sweep and one multiplier sweep per outer
iteration, 101 candidate actions, $\beta_u=10^{-4}$,
$\beta_{\lambda,s}=10$, $\beta_{\lambda,2}=0.05$, multiplier bound $50$, and
unit relaxation factors. The stopping tolerances for control and multiplier
changes are both $10^{-8}$.

The smooth reference is obtained from a degree-9 Legendre approximation of the
learned right-hand side using 2001 fitting points, ridge parameter $10^{-12}$,
and endpoint weight $100$. The FBS reference uses at most 500 iterations,
relaxation factor $0.5$, stopping tolerance $10^{-8}$, and initial control
$0.1$. The random seed for this comparison is $42$, and the calculations use
double precision.

\section{Multidimensional Example 1: Two-State, Two-Control Benchmark}

The compartmental benchmark uses \(T=1\), \(\Delta t=0.01\), and
\(M=100\) policy intervals. The state trajectory is propagated by classical
RK4 with the same time step. The two control components are optimized jointly
at every policy coordinate. A \(21\times21\) Cartesian-product grid over the
full admissible rectangle is evaluated first, followed by two
\(11\times11\) local joint refinements around the current best action.

The reported HPDPO calculation uses \(12\) outer iterations,
\(\beta_u=0\), \(\beta_{\lambda,s}=1\),
\(\beta_{\lambda,2}=0.05\), multiplier bound \(25\), and a backward
multiplier sweep per outer iteration. A relaxation factor \(0.20\) is used in
the numerical multiplier update. The initial control is the interior constant
pair \((\alpha,\beta)=(2.5,2.5)\), and the multiplier sequence is initialized
to zero. The known switching time is not supplied to the optimizer.

The global and local candidate evaluations always vary both control
components simultaneously. Thus, the multidimensional calculation is a joint
two-dimensional action search at each time point rather than two independent
component-wise updates.

\section{Multidimensional Example 2: Five-State CMO Alzheimer's Disease Model}

For the controlled monomer--oligomer model, the treatment policy is
piecewise constant over intervals of \(\Delta t=5\,{\rm s}\) on
\(T=1000\,{\rm s}\), giving \(200\) policy intervals. Within every policy
interval, the five-state nonlinear dynamics and the accumulated running cost
are integrated together by RK4 using internal substeps no larger than
\(0.5\,{\rm s}\). This ensures that the running-cost quadrature used during
candidate selection is the same as that used for the final reported objective.

Both treatment components are varied jointly. The reported calculation uses a
\(21\times21\) global Cartesian-product grid followed by two
\(11\times11\) local joint refinements. HPDPO is run for \(12\) outer
iterations with no additional control-smoothing penalty,
\(\beta_{\lambda,s}=1\), \(\beta_{\lambda,2}=0.05\), multiplier bound
\(25\), three backward multiplier sweeps per outer iteration, and multiplier
relaxation factor \(0.20\). The treatment sequence is initialized at the
constant interior pair \((v,w)=(0.01,0.01)\), and the multipliers are
initialized to zero.

The no-treatment and maximum-treatment objectives are evaluated with the same
forward solver as implementation checks against the published CMO study. The
main comparison is made with the published forward--backward numerical
solution; it is not interpreted as comparison with a known analytical global
optimum.

\section{Multidimensional Example 3: Six-State, Five-Control Flood-Management Problem}

The flood-management benchmark uses \(T=12\) and \(\Delta t=0.1\), giving
\(120\) policy intervals. State propagation uses classical RK4 on this common
time grid. Because a tensor-product action grid is impractical in five control
dimensions, the candidate set is constructed by a dimension-dependent
Sobol/QMC search.

At each policy coordinate, the reported global search uses \(512\) scrambled
Sobol candidates over the full five-dimensional admissible control box,
augmented by all \(2^5=32\) box corners and the current action. Two local
five-dimensional refinements are then performed, each using \(256\) Sobol
candidates over a shrinking hyperrectangle around the current best action.
The initial local radius is \(0.20\) times the componentwise control range and
is multiplied by \(0.35\) after each refinement. The reproducible QMC base seed
is \(20260921\); the implementation deterministically changes the scrambled
design across outer iterations, policy coordinates, and local refinement
levels.

The HPDPO calculation uses \(12\) outer iterations,
\(\beta_v=0\), \(\beta_{\lambda,s}=1\),
\(\beta_{\lambda,2}=0.05\), multiplier bound \(25\), three backward
multiplier sweeps per outer iteration, and multiplier relaxation factor
\(0.20\). The control sequence is initialized at the midpoint of each
admissible control interval and the multiplier sequence is initialized to
zero. All five control components are varied jointly in every candidate
evaluation.

For the reference calculation, the HJB auxiliary system is solved backward
with an adaptive RK45 integrator using relative tolerance \(10^{-11}\),
absolute tolerance \(10^{-13}\), and maximum internal step \(0.005\). The
resulting HJB control is then sampled on the same \(\Delta t=0.1\) grid used by
HPDPO so that objective, state-RMSE, and control-RMSE comparisons use a common
temporal discretization.

\section{Machine-Learning and Numerical Reporting Summary}

\begin{table}[ht]
\centering
\small
\caption{Summary of the principal learning and numerical settings used in the
reported experiments.}
\begin{tabular}{p{0.28\textwidth}p{0.62\textwidth}}
\hline
Item & Reported implementation \\
\hline
Four-state ReLU data
& \(100{,}000\) feasible-box samples; \(80/10/10\) train/validation/test split; training-only normalization.\\
Four-state ReLU model
& \(4\)--\(256\)--\(256\)--\(4\) ReLU network; AdamW; batch \(2048\);
learning rate \(10^{-3}\); weight decay \(10^{-6}\); maximum \(4000\)
epochs; seed \(123\).\\
ADNI split
& Subject-level \(70/15/15\); seed \(42\); test set evaluated only after
validation-based checkpoint selection.\\
ADNI Neural ODE
& Two hidden ReLU layers with \(64\) units each; Softplus-constrained RHS;
Adam; learning rate \(10^{-3}\); weight decay \(10^{-6}\); maximum \(1200\)
epochs; differentiable RK4 maximum step \(0.02\).\\
Scalar PPO
& Stable-Baselines3 MLP actor--critic; two \(64\)-unit hidden layers;
learning rate \(3\times10^{-4}\); \(\gamma=1\); five seeds; \(250{,}000\)
transitions per seed.\\
Scalar SAC
& Stable-Baselines3 MLP; two \(64\)-unit hidden layers; learning rate
\(3\times10^{-4}\); batch \(256\); \(\gamma=1\); \(\tau=0.005\);
automatic entropy coefficient; five seeds; \(250{,}000\) transitions per seed.\\
Two-control HPDPO search
& \(21\times21\) global joint grid plus two \(11\times11\) local joint
refinements.\\
Five-control HPDPO search
& \(512\) global Sobol candidates plus \(32\) corners; two local refinements
with \(256\) Sobol candidates each; all five control components varied jointly.\\
\hline
\end{tabular}
\end{table}

\clearpage
\renewcommand{\bibsection}{\section*{\refname}}
\bibliographystyle{unsrtnat}
\bibliography{references_rl_additions_final}
\end{document}